\documentclass[12pt, a4paper]{amsart}

\usepackage{amsmath}
\usepackage{amssymb}
\usepackage{amscd} 
\usepackage{hyperref} 
\usepackage{verbatim} 
\usepackage{color}  
\usepackage{tikz-cd}\usetikzlibrary{babel} 
\usepackage{enumerate} 
\usepackage{cite}

\usepackage[english]{babel}
\title[Profinite invariants of arithmetic groups]
 {Profinite invariants of arithmetic groups}

 \author[H. Kammeyer]{Holger Kammeyer}
 \email{holger.kammeyer@kit.edu}
\address{Institute for Algebra and Geometry, Karlsruhe Institute of Technology, 76131 Karlsruhe, Germany}

 \author[S. Kionke]{Steffen Kionke}
 \email{steffen.kionke@kit.edu}
 \address{Institute for Algebra and Geometry, Karlsruhe Institute of Technology, 76131 Karlsruhe, Germany}

 \author[J. Raimbault]{Jean Raimbault}
  \email{Jean.Raimbault@math.univ-toulouse.fr}
\address{Institut de Math\'ematiques de Toulouse ; UMR5219 \\ Universit\'e de Toulouse ; CNRS \\ UPS IMT, F-31062 Toulouse Cedex 9, France}

 \author[R. Sauer]{Roman Sauer}
 \email{roman.sauer@kit.edu}
\address{Institute for Algebra and Geometry, Karlsruhe Institute of Technology, 76131 Karlsruhe, Germany}

\subjclass[2010]{Primary 20E18, Secondary 11F75}
\keywords{profinite rigidity, arithmetic groups, $\ell^2$-invariants}


\theoremstyle{plain}
\newtheorem{theorem}{Theorem}
\newtheorem{lemma}[theorem]{Lemma}
\newtheorem{corollary}[theorem]{Corollary}
\newtheorem{proposition}[theorem]{Proposition}

\theoremstyle{definition}
\newtheorem{definition}[theorem]{Definition}
\newtheorem{remark}[theorem]{Remark}
\newtheorem{example}[theorem]{Example}

\numberwithin{equation}{section}
\numberwithin{theorem}{section}


\DeclareMathOperator{\inn}{int}
\DeclareMathOperator{\Lie}{Lie}

\DeclareMathOperator{\Aut}{Aut}

\DeclareMathOperator{\Res}{Res}
\DeclareMathOperator{\Ad}{Ad}
\DeclareMathOperator{\sign}{sign}
\DeclareMathOperator{\rank}{rk}
\DeclareMathOperator{\pr}{pr}

\providecommand{\normal}{\trianglelefteq}
\providecommand{\calL}{\mathcal{L}}
\providecommand{\calD}{\mathcal{D}}
\providecommand{\calO}{\mathcal{O}}
\providecommand{\fg}{\mathfrak{g}}
\providecommand{\fh}{\mathfrak{h}}
\providecommand{\fp}{\mathfrak{p}}
\providecommand{\fk}{\mathfrak{k}}
\providecommand{\ft}{\mathfrak{t}}
\providecommand{\fa}{\mathfrak{a}}
\providecommand{\fb}{\mathfrak{b}}
\providecommand{\fc}{\mathfrak{c}}

\providecommand{\fso}{\mathfrak{so}}

\providecommand{\bbN}{\mathbb{N}}
\providecommand{\bbR}{\mathbb{R}}
\providecommand{\bbQ}{\mathbb{Q}}
\providecommand{\bbZ}{\mathbb{Z}}
\providecommand{\bbF}{\mathbb{F}}
\providecommand{\bbA}{\mathbb{A}}
\providecommand{\bbC}{\mathbb{C}}
\providecommand{\alg}[1]{\underline{#1}}
\DeclareMathOperator{\SL}{SL}

\DeclareMathOperator{\SO}{SO}
\DeclareMathOperator{\Gal}{Gal}
\DeclareMathOperator{\Spin}{Spin}
\DeclareMathOperator{\Spn}{\alg{\Spin}}
\DeclareMathOperator{\ann}{ann}
\DeclareMathOperator{\vol}{vol}

\providecommand{\ignore}[1]{}




\newcounter{commentcounter}

\usepackage{ifthen,srcltx}
\newcommand{\showcomments}{yes}

\newsavebox{\commentbox}
\newenvironment{comnz}%
{\ifthenelse{\equal{\showcomments}{yes}}%
{\footnotemark
        \begin{lrbox}{\commentbox}
        \begin{minipage}[t]{1.25in}\raggedright\sffamily\tiny
        \footnotemark[\arabic{footnote}]}
{\begin{lrbox}{\commentbox}}}
{\ifthenelse{\equal{\showcomments}{yes}}
{\end{minipage}\end{lrbox}\marginpar{\usebox{\commentbox}}}
{\end{lrbox}}}


\begin{document}
\selectlanguage{english}

\begin{abstract}
  We prove that the sign of the Euler characteristic of arithmetic groups with CSP is determined by the profinite completion.  In contrast, we construct examples showing that this is not true for the Euler characteristic itself and that the sign of the Euler characteristic is not profinite among general residually finite groups of type \(F\).  Our methods imply similar results for \(\ell^2\)-torsion as well as a strong profiniteness statement for Novikov--Shubin invariants. 
\end{abstract}

\maketitle

\section{Introduction}
A finitely generated, residually finite group \(\Gamma\) is called \emph{profinitely rigid} if
any other such group $\Lambda$ with the same set of finite quotients as $\Gamma$ is isomorphic
to $\Gamma$;  this can be expressed in terms of profinite completions: if
\(\widehat{\Lambda} \cong \widehat{\Gamma}\), then $\Lambda \cong \Gamma$ (see
\cite{DFPR}). While all finitely generated abelian groups have this property, there are already
virtually cyclic groups which are not profinitely rigid \cite{Baumslag}.  In general,
profinite rigidity is extremely difficult to characterize.  Recent work of
Bridson--McReynolds--Reid--Spitler \cite{BmRRS} shows that profinite rigidity holds for
certain Kleinian groups, including the Weeks manifold group. On the other hand
we note that profinite rigidity of free groups, surface groups or
$\SL_n(\mathbb Z)$ is still open.
\subsection{Main results}
Two related questions seem more accessible: (i) to establish profinite rigidity among a certain class of groups and (ii) to find {\em profinite invariants}. A group invariant is \emph{profinite} if it takes the same value on profinitely isomorphic groups. In this paper we study a combination of both: we establish profinite invariance of the sign of the Euler characteristic in the class of arithmetic groups with the congruence subgroup property. In particular,
this (conjecturally) includes all irreducible lattices in higher-rank semisimple Lie groups.

\begin{theorem} \label{thm:main-theorem}
  Let \(\alg{G}_1\) and \(\alg{G}_2\) be linear algebraic groups defined over number fields \(k_1\) and \(k_2\), and let \(\Gamma_1 \le \alg{G}_1(k_1)\) and \(\Gamma_2 \le \alg{G}_2(k_2)\) be arithmetic subgroups.  Suppose that \(\alg{G}_1\) and \(\alg{G_2}\) have a finite congruence kernel and that \(\Gamma_1\) is profinitely commensurable with \(\Gamma_2\).  Then \(\sign \chi(\Gamma_1) = \sign \chi(\Gamma_2)\). 
\end{theorem}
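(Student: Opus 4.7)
The plan is to use the congruence subgroup property to convert the profinite commensurability hypothesis into an essentially adelic statement about $\alg{G}_1$ and $\alg{G}_2$, then use global classification results to propagate this information to the archimedean places, and finally read off the sign of $\chi$ from Borel's computation of $\ell^2$-Betti numbers of arithmetic locally symmetric spaces.

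First, after replacing each $\Gamma_i$ by a torsion-free congruence subgroup of finite index---which alters neither the sign of $\chi$ nor the profinite commensurability class---I would arrange $\widehat{\Gamma}_1 \cong \widehat{\Gamma}_2$. By CSP and finiteness of the congruence kernel, up to a finite normal subgroup each $\widehat{\Gamma}_i$ is an open compact subgroup of $\alg{G}_i(\bbA_{k_i}^{\mathrm{fin}})$. Refining a topological isomorphism of these open compacts to neighborhoods of the identity that decompose as products of local compact opens, I obtain for almost every finite place $v$ of $k_1$ a corresponding place $\sigma(v)$ of $k_2$ together with isomorphisms $k_{1,v} \cong k_{2,\sigma(v)}$ and $\alg{G}_{1,v} \cong \alg{G}_{2,\sigma(v)}$, using that a compact open subgroup of a semisimple $p$-adic Lie group rigidly recovers the local field and the group up to isogeny.

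Next I would invoke the Borel--Tits classification of semisimple $k$-groups together with the Hasse principle to lift this agreement at cofinitely many finite places to agreement of the archimedean invariants that control $\sign\chi$. Matching local types at almost all finite places forces $\alg{G}_1$ and $\alg{G}_2$ to be forms of a common absolute type, and global Galois cohomology then constrains the archimedean completions enough to guarantee $d_1 = d_2$ and $\delta_1 = \delta_2$, where $d_i = \dim(G_{i,\infty}/K_{i,\infty})$ and $\delta_i = \rank_\bbR G_{i,\infty} - \rank_\bbR K_{i,\infty}$ with $G_{i,\infty} = \alg{G}_i(k_i \otimes_\bbQ \bbR)$ and $K_{i,\infty}$ a maximal compact subgroup. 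Since the $\ell^2$-Euler characteristic of any group with a finite classifying space agrees with the ordinary Euler characteristic, Borel's theorem then yields $\chi(\Gamma_i) = 0$ when $\delta_i > 0$ and $\sign\chi(\Gamma_i) = (-1)^{d_i/2}$ when $\delta_i = 0$, giving the claim.

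The main obstacle will be the middle step: the profinite completion records only non-archimedean information, yet the sign of $\chi$ is controlled by the real form at infinity, and distinct real forms of the same complex group can in principle have matching $p$-adic profiles. Ensuring that the invariants $d$ and $\delta$---and not merely the absolute type---are determined by the data at finite places requires a careful application of the Hasse principle and the classification of inner/outer forms. The non-uniform case brings further difficulty, as Borel's vanishing must be supplemented by an $L^2$-cohomological analysis on the Borel--Serre compactification to justify concentration of $\ell^2$-Betti numbers in the middle dimension and the equality $\chi(\Gamma_i) = \chi^{(2)}(\Gamma_i)$.
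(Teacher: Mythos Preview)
Your overall architecture is right, and the endgame via Borel's $\ell^2$-Betti number computation matches the paper. The genuine gap is the middle step: you assert that matching $p$-adic data forces $d_1 = d_2$, but this is simply false. The paper's own Theorem~\ref{thm:euler-of-spin-groups} exhibits $\Gamma_{8,2}$ and $\Gamma_{4,6}$ with isomorphic profinite completions, yet $\dim X_{8,2} = 16$ while $\dim X_{4,6} = 24$. So no appeal to the Hasse principle or global Galois cohomology can recover equality of the symmetric-space dimensions from the finite places; the profinite completion does not determine $d$. You correctly flag this step as the main obstacle, but the resolution you propose would prove too much.

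What the paper establishes instead is the weaker but sufficient congruence $d_1 \equiv d_2 \bmod 4$, and the key tool you are missing is Weil's product formula applied to the Killing form: the $\bbQ_p$-Lie algebras $\Lie(\alg{G}_i)(\bbQ_p)$ (which you do correctly extract from $\widehat{\Gamma}_i$) determine the $p$-adic isometry class of the Killing form, whose real signature is $(\dim X_i,\dim K_i)$; the product formula then fixes $\dim X_i - \dim K_i \bmod 8$, hence $\dim X_i \bmod 4$. For $\delta_1 = \delta_2$ the paper argues, via Chebotarev and the sequence $1 \to \Ad(\alg{G}) \to \Aut(\alg{G}) \to \Aut(\mathrm{Dyn}(\Phi)) \to 1$, that the real groups $\alg{G}_i \times_\bbQ \bbR$ are \emph{inner} forms of one another, and then gives a direct Lie-theoretic argument (conjugate involutions on a compact group fix maximal tori of the same rank) that inner real forms have maximal compact subgroups of equal complex rank. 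Your blanket invocation of the Hasse principle supplies neither mechanism. A minor side point: your $\delta_i$ should be defined with the absolute (complex) rank, not $\rank_\bbR$; with the usual meaning of real split rank one has $\rank_\bbR K_{i,\infty} = 0$, which is not what you want.
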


Let us explain the meaning of the terms in this statement. Two groups \(\Gamma_1\) and \(\Gamma_2\) are called \emph{profinitely commensurable} if the profinite completions \(\widehat{\Gamma_1}\) and \(\widehat{\Gamma_2}\) have isomorphic open subgroups. Equivalently, \(\Gamma_1\) and \(\Gamma_2\) have finite index subgroups with isomorphic profinite completions.  The function \(\sign(x)\) takes the values \(-1, 0, 1\) if \(x < 0,\ x = 0,\ x > 0\). A subgroup of $\alg G(k)$ is \emph{arithmetic} if it is commensurable to $\alg{G}(k) \cap \mathrm{GL}_n(\calO_k)$ for some $k$-rational embedding $\alg G \to \mathrm{GL}_n$, where $\calO_k$ is the ring of algebraic integers in $k$. It is more technical to define the congruence kernel of $\alg G$ and we will not do so here. Serre conjectured
that for simple groups $\alg{G}$ the congruence kernel is finite whenever
the real Lie group $\alg G(k \otimes_{\mathbb Q} \mathbb R)$ has real rank at least 2; we refer to \cite{Raghunathan_CSP} for a survey on the status of this conjecture. 

We note that it is known that profinite rigidity does not hold for all higher rank arithmetic lattices, even among themselves (as follows from \cite{Aka2012a}). However, the profinite isomorphism class of arithmetic groups for which the congruence subgroup property holds, is easier to understand than that of general lattices; for example M.~Aka proves in \emph{loc.\ cit.}\ that it is always finite within the class of arithmetic groups. To prove the theorem above we push Aka's arguments further.  

It is possible to calculate the Euler characteristic of arithmetic groups using Harder's Gau{\ss}-Bonnet Theorem \cite{Harder71}. We apply this method to obtain the following example which shows that Theorem \ref{thm:main-theorem} does not extend to the Euler characteristic itself.

\begin{theorem} \label{thm:euler-of-spin-groups}
  For positive integers \(m\) and \(n\), let \(\Gamma_{m,n}\) be the level four principal congruence subgroup of \(\Spin(m,n)(\bbZ)\).  Then \(\widehat{\Gamma_{8,2}} \cong \widehat{\Gamma_{4,6}}\) but
    \[ \chi(\Gamma_{8,2}) = 2^{89} \cdot 5^2 \cdot 17 \quad \text{whereas} \quad \chi(\Gamma_{4,6}) = 2^{90} \cdot 5^2 \cdot 17. \]
\end{theorem}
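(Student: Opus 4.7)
The plan is to separate the two claims — the profinite isomorphism and the explicit Euler characteristics — and treat them in turn.

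For the profinite isomorphism, let $Q_{m,n}$ denote the standard diagonal quadratic form of signature $(m,n)$, so that $\Gamma_{m,n}$ is the principal level-$4$ congruence subgroup of $\Spn(Q_{m,n})(\bbZ)$. A direct Hilbert-symbol computation shows that $Q_{8,2}$ and $Q_{4,6}$ have the same dimension, the same discriminant $+1$, and the same Hasse invariant at every place (trivial at each $p$, since both $\binom{8}{2}=28$ and $\binom{4}{2}=6$ are even). Hasse--Minkowski then gives $Q_{8,2}\otimes\bbQ_p\cong Q_{4,6}\otimes\bbQ_p$ for every finite $p$, and these isomorphisms can be adjusted to identify the $\bbZ_p$-lattices: automatic at odd $p$ where both lattices are unimodular, and verified by hand at $p=2$. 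Consequently the $\bbQ_p$-groups $\Spn(Q_{m,n})$ and their level-$4$ principal congruence subgroups match at every finite prime. Since both $\Spn(Q_{m,n})$ have real rank $\min(m,n)\geq 2$, the congruence subgroup property of Kneser applies, so $\widehat{\Gamma_{m,n}}$ is — up to a finite central congruence kernel — the restricted product of the local closures. The matching local data thus yield $\widehat{\Gamma_{8,2}}\cong\widehat{\Gamma_{4,6}}$.

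For the Euler characteristics, both symmetric spaces $X_{m,n}$ have compact duals $X^u_{m,n}=\Spin(10)/(\Spin(m)\times\Spin(n))$ of equal rank, so Hopf's formula $\chi(G/H)=|W_G|/|W_H|$ yields $\chi(X^u_{8,2})=|W_{D_5}|/|W_{D_4}|=1920/192=10$ and $\chi(X^u_{4,6})=1920/(4\cdot 24)=20$. I would then apply Harder's Gau{\ss}--Bonnet theorem
\[ \chi(\Gamma_{m,n}) \;=\; \chi(X^u_{m,n}) \cdot \frac{\vol(\Gamma_{m,n}\backslash X_{m,n})}{\vol(X^u_{m,n})} \]
and evaluate the covolume by Prasad's volume formula. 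Prasad factors the covolume as an archimedean piece times a product of local factors that depend only on the $\bbQ_p$-group and its $\bbZ_p$-lattice; by the previous step the finite-place factors agree for both signatures. What remains is an archimedean computation involving volumes of the compact groups $\Spin(m)\times\Spin(n)$ in a fixed normalization, computable from Macdonald's product formulas and the values of $\zeta(2k)$ for small $k$.

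The plan itself is transparent, and the hardest part is bookkeeping. One must align the Haar measure conventions used by Prasad with those of Harder, account correctly for the congruence kernel and for the index of the level-$4$ subgroup inside $\Spn(Q_{m,n})(\bbZ)$, and handle the $2$-adic subtleties of the genus argument. Once these constants are tracked, the normalized quantity $\vol(\Gamma_{m,n}\backslash X_{m,n})/\vol(X^u_{m,n})$ comes out the same for both signatures, so $\chi(\Gamma_{4,6})/\chi(\Gamma_{8,2})=\chi(X^u_{4,6})/\chi(X^u_{8,2})=2$, and a direct evaluation of this common covolume ratio produces the asserted values $2^{89}\cdot 5^2\cdot 17$ and $2^{90}\cdot 5^2\cdot 17$.
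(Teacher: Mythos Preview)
Your overall strategy matches the paper's. For the profinite isomorphism you argue as the paper does (following Aka): the two diagonal forms are isometric over every $\bbZ_p$, the spin groups have strong approximation and trivial congruence kernel (Kneser, since both $\bbQ$-Witt indices are $\geq 2$), so the profinite completions factor as identical products of local groups. One slip: the Hasse invariant of $\langle 1^m,(-1)^n\rangle$ at a place $p$ is $(-1,-1)_p^{\binom{n}{2}}$, not $(-1,-1)_p^{\binom{m}{2}}$; for $n=2,6$ the exponent is odd, so the invariants equal $(-1,-1)_p$ (nontrivial at $p=2$ and $\infty$) rather than trivial --- but they still agree, so your conclusion survives.

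For the Euler characteristics, the paper likewise uses Harder's Gau{\ss}--Bonnet in adelic form (via a volume formula of the third author rather than Prasad), and your observation that the only signature-dependent factor is the Weyl-group ratio --- producing the factor of $2$ --- is exactly what the paper's general formula isolates as a binomial coefficient $\binom{\ell}{k}$. The real gap is that you do not compute the actual value $2^{89}\cdot 5^2\cdot 17$. The paper obtains it from a closed formula for $\chi(\Gamma_{m,n})$ in terms of special zeta values and, since $d\equiv 2\bmod 4$ here, the generalized Bernoulli number $B_{\psi,5}$ for the primitive character mod~$4$; along the way it must treat the prime $2$ separately, because the integral spin group scheme fails to be smooth there, computing the $2$-adic volume directly via the exponential map on the Clifford algebra. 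Invoking Prasad would work in principle, but carrying it out --- fixing the normalizations, evaluating the local factor at $2$, and reducing the archimedean constants to the stated power of $2$ times $5^2\cdot 17$ --- is the bulk of the proof, and ``a direct evaluation produces the asserted values'' is not a substitute for it.
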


The spinor groups \(\Spin(m,n)(\bbZ)\) arise from the \((m+n)\)-ary integral diagonal quadratic form with \(m\) coefficients ``\(+1\)'' and \(n\) coefficients ``\(-1\)''.  Precise definitions are given in Section~\ref{section:euler-computation}.  The existence of the above examples implies that one cannot broaden the conclusion of Theorem~\ref{thm:main-theorem} from arithmetic to residually finite groups that admit a finite classifying space. The latter is referred to as being of type $(F)$. 

\begin{corollary} \label{cor:example-signs}
  There are three residually finite groups $\Gamma_1$, $\Gamma_2$, and $\Gamma_3$
  of type (F) which have isomorphic profinite completions such that
  $$ \chi(\Gamma_1) < 0 ,\qquad \chi(\Gamma_2) = 0 ,\qquad \chi(\Gamma_3) > 0.$$
\end{corollary}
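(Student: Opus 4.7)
Starting from Theorem~\ref{thm:euler-of-spin-groups} we obtain two residually finite type~(F) groups $A = \Gamma_{8,2}$ and $B = \Gamma_{4,6}$ with $\widehat{A} \cong \widehat{B}$ and $\chi(A) = a$, $\chi(B) = 2a$, where $a = 2^{89} \cdot 5^2 \cdot 17 > 0$.  The plan is to promote this profinitely isomorphic pair to a triple realising all three signs of the Euler characteristic.

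A na\"ive attempt via direct products fails: for each fixed $n$, the groups $A^p \times B^q$ with $p+q = n$ are pairwise profinitely isomorphic, but their Euler characteristics $2^q a^n$ are uniformly positive.  Multiplying all such products by a common auxiliary group simply rescales $\chi$ by a fixed factor and cannot change its sign.  To allow $\chi$ to decrease one must leave the realm of direct products, and the cleanest way to do so is via amalgamated free products, which satisfy $\chi(X *_C Y) = \chi(X) + \chi(Y) - \chi(C)$.

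My plan is therefore to exhibit a common type~(F) subgroup $C$ of $A$ and $B$, embedded into each with compatible inclusions on profinite completions (i.e.\ so that the map $\widehat{C} \to \widehat{A}$ agrees with the composition $\widehat{C} \to \widehat{B} \cong \widehat{A}$ up to conjugation) and satisfying $\chi(C) = 3a$.  With such a $C$ in hand I would define
\[
  \Gamma_1 = A *_C A, \qquad \Gamma_2 = A *_C B, \qquad \Gamma_3 = B *_C B.
\]
A direct computation gives $\chi(\Gamma_1) = -a$, $\chi(\Gamma_2) = 0$, and $\chi(\Gamma_3) = a$, so the three signs occur.  Assuming $A$, $B$ and $C$ are good groups in the sense of Serre, profinite completion commutes with the pushout, and each $\widehat{\Gamma_i}$ is canonically identified with the profinite amalgamated product $\widehat{A} *_{\widehat{C}} \widehat{A}$, so that the three profinite completions coincide.

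The main obstacle is the construction of the subgroup $C$.  A natural candidate is an arithmetic lattice in a common reductive subgroup of $\Spin(8,2)$ and $\Spin(4,6)$ --- for instance a suitable $\Spin(4,2)$ embedded via the standard inclusions of quadratic forms --- intersected with the defining level-four principal congruence subgroups.  Harder's Gauss--Bonnet formula provides explicit control over $\chi$ of arithmetic lattices, and passing to a finite-index subgroup of controlled index should allow tuning $\chi(C)$ to $3a$.  The compatibility of the two embeddings with the profinite isomorphism $\widehat{A} \cong \widehat{B}$, together with the goodness of the groups involved, are the delicate points, and I would expect them to be accessible by the same CSP-based techniques that underlie Theorem~\ref{thm:main-theorem}.
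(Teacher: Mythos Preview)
Your proposal has a genuine gap, and the paper's actual argument is much simpler than the route you are attempting.

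You correctly observe that direct products alone cannot change the sign of \(\chi\), and you then reach for amalgamated free products \(A *_C B\).  But this introduces three serious obstacles that you only partially acknowledge and cannot easily resolve.  First, the assertion that profinite completion commutes with the pushout is not a consequence of goodness in Serre's sense; what you need is that the amalgam be \emph{efficient} (equivalently, that \(C\) be closed in the profinite topologies that \(A\) and \(B\) induce on it), and that the resulting group be residually finite.  Neither is automatic for amalgams of higher-rank lattices along an arithmetic subgroup.  Second, goodness itself is wide open for higher-rank arithmetic groups such as \(\Gamma_{8,2}\) and \(\Gamma_{4,6}\); the instances of goodness mentioned in the paper are all in rank one and rely on virtual specialness, which is unavailable here.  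Third, producing a common subgroup \(C\) with \(\chi(C)=3a\) and with embeddings into \(A\) and \(B\) that intertwine the given isomorphism \(\widehat A\cong\widehat B\) is a substantial construction, not a routine adjustment.

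The paper sidesteps all of this by using \emph{ordinary} free products rather than amalgamated ones.  The point you missed is that \(\chi(X*F_n)=\chi(X)-n\), so free-producting with a free group shifts \(\chi\) downward by any prescribed integer, while profinite completion, being a left adjoint, preserves coproducts on the nose.  Concretely, with \(c=\chi(\Gamma_{8,2})\) the paper takes
\[
\Gamma_1=(\Gamma_{8,2}\times\Gamma_{8,2})*F_{2c^2},\quad
\Gamma_2=(\Gamma_{8,2}\times\Gamma_{4,6})*F_{2c^2},\quad
\Gamma_3=(\Gamma_{4,6}\times\Gamma_{4,6})*F_{2c^2},
\]
so that \(\chi(\Gamma_i)=i\cdot c^2 + (1-2c^2)-1=(i-2)c^2\).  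Residual finiteness and type~\((F)\) are immediate for free products of such groups, and \(\widehat{\Gamma_1}\cong\widehat{\Gamma_2}\cong\widehat{\Gamma_3}\) because \(\widehat{\;\cdot\;}\) preserves both \(\times\) and \(*\).  Your instinct to combine products with a free construction was right; the unnecessary amalgamation is what derails the argument.
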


Setting \(c = 2^{89} \cdot 5^2 \cdot 17\), the above groups can simply be taken as
  \begin{align*}
  \Gamma_1 &= (\Gamma_{8,2}\times\Gamma_{8,2})* F_{2c^2},\\
  \Gamma_2 &= (\Gamma_{8,2}\times\Gamma_{4,6})* F_{2c^2},\\
  \Gamma_3 &= (\Gamma_{4,6}\times\Gamma_{4,6})* F_{2c^2},
  \end{align*}
  where \(F_{2c^2}\) is the free group on \(2c^2\) letters.  Since the profinite completion functor preserves products and coproducts, the three groups are profinitely isomorphic. They are still residually finite and of type~\((F)\). Additivity and multiplicativity of the Euler characteristic gives
    \begin{align*}
    \chi(\Gamma_1) &= c^2 + (1-2c^2) - 1 = -c^2 < 0,\\
    \chi(\Gamma_2) &= 2c^2 + (1-2c^2) -1 = 0, \\
    \chi(\Gamma_3) &= 4c^2 + (1-2c^2) - 1 = 2c^2 > 0.
  \end{align*}
 
 The Euler characteristic equals the alternating sum of the \emph{\(\ell^2\)-Betti numbers}~\cite{Lueck:l2-invariants, Kammeyer:intro-l2}.  For arithmetic groups, \(\ell^2\)-Betti numbers are known to be nonzero in at most one degree.  Such a nonzero \(\ell^2\)-Betti number occurs if and only if the group is semisimple and the fundamental rank is zero.  In that case the degree with nonvanishing \(\ell^2\)-Betti number is given by half the dimension of the associated symmetric space.  This dimension, however, can change when passing to a profinitely commensurable arithmetic group.  So \(\ell^2\)-Betti numbers themselves are not profinite.  Among \(S\)-arithmetic groups, no higher \(\ell^2\)-Betti number is profinite~\cite{Kammeyer-Sauer:spinor-groups}, in contrast to the first \(\ell^2\)-Betti number which is profinite among all finitely presented residually finite groups~\cite[Corollary~3.3]{Bridson-Conder-Reid:Fuchsian}.  Thus in the semisimple case, the proof of Theorem~\ref{thm:main-theorem} splits into two parts: showing that the fundamental rank is profinite, so that the vanishing of Euler characteristic is profinite, and showing that the profinite completion determines the dimension of the symmetric space mod \(4\).  

\subsection{Extension to other invariants} 
Whenever an arithmetic group~\(\Gamma\) has vanishing Euler characteristic, a secondary invariant called \emph{\(\ell^2\)-torsion} and denoted by \(\rho^{(2)}(\Gamma)\) is defined; see~\cite[Chapter~3]{Lueck:l2-invariants} and~\cite[Chapter~5]{Kammeyer:intro-l2} for an introduction.  In many ways, \(\rho^{(2)}(\Gamma)\) behaves like an ``odd-dimensional cousin'' of \(\chi(\Gamma)\).  Also the profinite behavior of \(\rho^{(2)}(\Gamma)\) is parallel to \(\chi(\Gamma)\).

\begin{theorem} \label{thm:sign-of-l2-torsion}
  In addition to the assumptions in Theorem~\ref{thm:main-theorem}, suppose that \(\chi(\Gamma_i)= 0\) for either (then both) \(i = 1,2\) and \(\rank_{k_i} \alg{G}_i = 0\) for both \(i = 1,2\).  Then \(\sign \rho^{(2)}(\Gamma_1) = \sign \rho^{(2)} (\Gamma_2)\).
\end{theorem}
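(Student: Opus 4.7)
The plan is to transport the strategy behind Theorem~\ref{thm:main-theorem} from the Euler characteristic to $\ell^2$-torsion. The hypothesis $\rank_{k_i}\alg{G}_i = 0$ makes $\Gamma_i$ cocompact in the real semisimple Lie group $G_i = \alg{G}_i(k_i\otimes_{\bbQ}\bbR)$. After passing to torsion-free congruence subgroups (which preserves profinite commensurability and merely scales $\rho^{(2)}$ by a positive integer), we may assume that $\Gamma_i$ acts freely on its symmetric space $X_i = G_i/K_i$. By the proportionality principle for $\ell^2$-torsion of closed locally symmetric spaces (see~\cite{Lueck:l2-invariants}),
\[
  \rho^{(2)}(\Gamma_i) \;=\; t^{(2)}(X_i)\cdot\vol(\Gamma_i\backslash X_i),
\]
where $t^{(2)}(X_i)\in\bbR$ depends only on $X_i$. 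Positivity of the covolume reduces the theorem to checking $\sign t^{(2)}(X_1) = \sign t^{(2)}(X_2)$.

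Recall that $t^{(2)}(X_i)$ vanishes unless the fundamental rank $\delta(G_i) = \rank_{\bbC} G_{i,\bbC} - \rank_{\bbC} K_{i,\bbC}$ equals $1$. The proof of Theorem~\ref{thm:main-theorem} already furnishes the profinite invariance of $\delta$ in our setting, since it is precisely the vanishing of $\delta$ that detects the vanishing of $\chi$ used there. Hence I may assume $\delta(G_1) = \delta(G_2) = 1$, and decompose each $G_i$ as an almost direct product $G_i^{(0)}\cdot H_i$ in which $G_i^{(0)}$ is the unique almost-simple factor of fundamental rank one while every almost-simple factor of $H_i$ has fundamental rank zero. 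Correspondingly $X_i = X_i^{(0)}\times Y_i$, and the product formulas for $\chi$ and $\rho^{(2)}$ yield
\[
  \sign t^{(2)}(X_i) \;=\; \sign t^{(2)}(X_i^{(0)}) \cdot \sign \chi(Y_{i,u}),
\]
where $Y_{i,u}$ denotes the compact dual of $Y_i$. The second sign equals $(-1)^{\dim Y_i/2}$, and its profinite invariance is precisely the ingredient used in Theorem~\ref{thm:main-theorem} to control $\sign\chi$.

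The remaining task, and the main obstacle, is $\sign t^{(2)}(X_1^{(0)}) = \sign t^{(2)}(X_2^{(0)})$. Here I would invoke the structural analysis underlying Theorem~\ref{thm:main-theorem}, which, building on Aka's work, matches up the absolute root data of the almost-simple factors of $\alg{G}_1$ and $\alg{G}_2$ and their archimedean completions up to the ambiguities allowed by profinite commensurability. Armed with this, one enters the short classification of simple real Lie groups with $\delta = 1$ and inspects the explicit values of $t^{(2)}$ tabulated in~\cite[Chapter~5]{Lueck:l2-invariants}, reading off the sign case by case. I expect this factor-by-factor verification to be tedious but routine: in each admissible type the sign of $t^{(2)}$ is controlled by the real dimension modulo a small integer together with the isogeny class of the underlying simple factor, both invariants that the proof of Theorem~\ref{thm:main-theorem} has already shown to be profinite.
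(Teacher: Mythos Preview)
Your reduction via the proportionality principle to comparing $\sign t^{(2)}(X_1)$ and $\sign t^{(2)}(X_2)$ is fine, and so is the observation that profiniteness of $\delta$ disposes of the case $\delta\neq 1$. From that point on, however, you take an unnecessary detour. Once $\delta(G_i)=1$, Olbrich's explicit computation (\cite[Proposition~1.3]{Olbrich:l2-symmetric}, combined with the equality of analytic and cellular $\ell^2$-torsion for closed manifolds) gives directly
\[
\sign\rho^{(2)}(\Gamma_i)=(-1)^{(\dim X_i-1)/2},
\]
with no need to split off a fundamental-rank-one simple factor or to consult case tables. Since Theorem~\ref{thm:f-rank-dim-mod-4} already supplies $\dim X_1\equiv\dim X_2\bmod 4$ (and not merely its consequence for $\sign\chi$), the sign equality follows at once. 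This is precisely the paper's argument.

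Your factorisation $X_i=X_i^{(0)}\times Y_i$ and the product formula are correct, but they trade a single clean global invariant ($\dim X_i\bmod 4$) for two factor-wise quantities ($\dim Y_i\bmod 4$ and $\sign t^{(2)}(X_i^{(0)})$) whose separate profiniteness you must then establish. Matching the simple factors of $G_1$ and $G_2$ can indeed be extracted from the proof of Theorem~\ref{thm:f-rank-dim-mod-4} (the real groups are inner forms of one another, and inner automorphisms respect simple factors), so your route is salvageable. But the final ``tedious but routine'' inspection you defer is, once carried out, nothing other than a re-derivation of Olbrich's sign formula on each irreducible piece; leaving it as a promise means the proof is incomplete as written.
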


We conjecture that the assumption on \(\rank_{k_i} \alg{G}_i\) is not needed.  It would not be needed if~\cite[Conjecture~1.2]{Lueck-Sauer-Wegner:determinant} was true and it is not needed if the \emph{fundamental rank} of \(\alg{G}_i\) defined on p.\,\pageref{page:fundamental-rank} is even~\cite[Theorem~1.2]{Kammeyer:nonuniform}.  But in our proof, we are using the equality of analytic and cellular \(\ell^2\)-torsion which is, at present, only known if \(\Gamma_i\) is a cocompact lattice in the Lie group \(\prod_v \alg{G}_i({k_i}_v)\) where \(v\) runs through the infinite places of \(k_i\).  This cocompactness condition is equivalent to \(\rank_{k_i} \alg{G}_i = 0\).

If \(\Gamma\) and \(\Lambda\) are of type~\((F)\) and \(\Lambda\) is residually finite and \mbox{\(\ell^2\)-acyclic}, then we have the product formula \(\rho^{(2)}(\Gamma \times \Lambda) = \chi(\Gamma) \rho^{(2)}(\Lambda)\) as proven in~\cite[Theorem~3.93\,(4), p.\,161]{Lueck:l2-invariants}.  Hence if \(M\) is some closed hyperbolic \(3\)-manifold, then
\[ 2 \,\rho^{(2)}(\Gamma_{8,2} \times \pi_1 M) = \rho^{(2)}(\Gamma_{4,6} \times \pi_1 M) < 0. \]
The groups \(\Lambda_i = \pi_1 M \times \Gamma_{4-i}\) are residually finite, of type \((F)\) and
\[ \rho^{(2)}(\Lambda_1) < 0 ,\qquad \rho^{(2)}(\Lambda_2) = 0 ,\qquad \rho^{(2)}(\Lambda_3) > 0.\]
This shows that, as before, Theorem~\ref{thm:sign-of-l2-torsion} has no immediate extension in one way or another.

Since an arithmetic group \(\Gamma\) has at most one nonzero \(\ell^2\)-Betti number, the Euler characteristic \(\chi(\Gamma)\) encodes the entire \emph{reduced} \(\ell^2\)-co\-homo\-logy.  The lesser known \emph{Novikov--Shubin invariants} \(\alpha_p(\Gamma)\) capture whether \(\Gamma\) additionally possesses \emph{unreduced} \(\ell^2\)-cohomology.  The reader can find an overview in~\cite[Chapter~2]{Lueck:l2-invariants}.  In the semisimple and \(k\)-anisotropic case, our methods imply an even stronger statement on these subtle invariants.  To state it, let us introduce the relabeling \(\overline{\alpha}_{\pm q} (\Gamma) = \alpha_{k \pm q}(\Gamma)\) where the symmetric space on which \(\Gamma\) acts is either \(2k\)- or \((2k + 1)\)-dimensional.

\begin{theorem} \label{thm:novikov-shubin}
  For \(i = 1, 2\), let \(k_i\) be number fields, let \(\alg{G}_i\) be semisimple linear algebraic \(k_i\)-groups with \(\rank_{k_i} \alg{G}_i = 0\), and let \(\Gamma_i \le \alg{G}_i\) be arithmetic.  Suppose that \(\alg{G}_1\) and \(\alg{G}_2\) have a finite congruence kernel and that \(\Gamma_1\) is profinitely commensurable with \(\Gamma_2\).  Then \(\overline{\alpha}_{\pm q} (\Gamma_1) = \overline{\alpha}_{\pm q} (\Gamma_2)\) for all \(q\).
\end{theorem}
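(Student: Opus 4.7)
The plan is to reduce the assertion to an equality of Novikov--Shubin invariants of the associated symmetric spaces, and then to exploit Olbrich's explicit computation of these invariants in terms of root data. Since $\rank_{k_i} \alg{G}_i = 0$, each $\Gamma_i$ is cocompact in the real Lie group $G_i = \prod_v \alg{G}_i((k_i)_v)$, where $v$ runs through the archimedean places of $k_i$. Let $X_i = G_i/K_i$ be the associated Riemannian symmetric space of noncompact type. Because $\Gamma_i$ has a torsion-free finite-index subgroup acting freely and cocompactly on $X_i$, and Novikov--Shubin invariants are commensurability invariants, one has $\alpha_p(\Gamma_i) = \alpha_p(X_i)$ for all $p$ (see \cite[Chapter~2]{Lueck:l2-invariants}). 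Hence the statement concerns a $\Gamma_i$-independent quantity: the Novikov--Shubin spectrum of $X_i$, which depends only on $\fg_i := \Lie(G_i)$ together with its Cartan decomposition.

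Next, by the explicit computation of Olbrich for symmetric spaces of noncompact type, the invariants $\alpha_p(X_i)$ are strictly positive rationals given in closed form in terms of the restricted root system of $\fg_i$, its multiplicities, and the Plancherel density near the trivial representation. Two features of Olbrich's answer are central for what follows. First, the spectrum satisfies the Poincar\'e-type symmetry $\alpha_p(X_i) = \alpha_{\dim X_i - p}(X_i)$, so that after the centred relabelling $\overline{\alpha}_{\pm q}(\Gamma_i) = \alpha_{k_i \pm q}(\Gamma_i)$ the sequence becomes indexed by signed distance from the middle dimension. Second, the numerical value of $\alpha_{k_i \pm q}(X_i)$ depends only on the complexified root datum and on combinatorial data attached to the Cartan involution that enters Olbrich's formula at the given offset $q$ from the middle.

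With this reduction, the remaining task is to show that profinite commensurability of $\Gamma_1$ and $\Gamma_2$ forces equality of the data feeding into Olbrich's formula at each signed offset $\pm q$. For this I would invoke the same arithmetic input driving Theorems~\ref{thm:main-theorem} and~\ref{thm:sign-of-l2-torsion}: building on Aka's analysis of the profinite isomorphism class of arithmetic groups with CSP, profinite commensurability determines the absolute type (hence the complexified root system) of $\alg{G}_i$, and it constrains the collection of real forms appearing at archimedean places up to an ambiguity that shifts $\dim X_i$ only through factors transparent to the centred indexing. Combined with the root-theoretic description of $\overline{\alpha}_{\pm q}$, this yields the claimed equality.

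The main obstacle lies in the last step: as Theorem~\ref{thm:euler-of-spin-groups} already shows, the archimedean real forms and even the dimensions of $X_1$ and $X_2$ may genuinely differ under profinite commensurability, so one cannot expect a direct identification of the $G_i$. What must be checked is that the discrepancies permitted by the arithmetic rigidity theorem affect only the outermost, asymmetric tail of the Novikov--Shubin sequence --- contributed by what are effectively flat or compact-type directions --- and leave unchanged every term indexed symmetrically about the middle. Carrying this out amounts to unwinding Olbrich's closed-form expression and verifying, offset by offset, that each ingredient is a function of the portion of the archimedean profile pinned down by $\widehat{\Gamma}_i$; I expect this bookkeeping, rather than any conceptual new ingredient, to constitute the bulk of the work.
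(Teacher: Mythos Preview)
Your outline is on the right track---reduce to the symmetric space via cocompactness and commensurability invariance, then invoke Olbrich---but you are missing the single observation that collapses the whole argument, and as a result you overestimate the remaining work.

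Olbrich's computation is far simpler than you describe. It does not yield ``strictly positive rationals given in closed form in terms of the restricted root system, its multiplicities, and the Plancherel density''; rather, for a torsion-free cocompact lattice in a semisimple Lie group $G$ with symmetric space $X$ of dimension $n$ and fundamental rank $m = \delta(G)$, one has $\alpha_p(\Gamma) = m$ for $p \in [\tfrac{n-m}{2}, \tfrac{n+m}{2}-1]$ and $\alpha_p(\Gamma) = \infty^+$ otherwise. Since $n$ and $m$ have the same parity, after the centred relabelling the entire sequence $\overline{\alpha}_{\pm q}(\Gamma)$ is determined by the single integer $\delta(G)$: it equals $\delta(G)$ on a window of width $\delta(G)$ around $q=0$ and $\infty^+$ elsewhere. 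No root-datum bookkeeping, no offset-by-offset analysis, and no control of ``which real forms are allowed to differ'' is needed.

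With this in hand, the proof is immediate from Theorem~\ref{thm:f-rank-dim-mod-4}\,(ii), which gives $\delta(G_1) = \delta(G_2)$ under your hypotheses. That is exactly the paper's argument, stated in the paragraph following the theorem. Your proposed final step---checking that the ambiguities in the archimedean profile ``affect only the outermost, asymmetric tail''---is unnecessary, and framing it as ``the bulk of the work'' suggests you have not yet looked up the precise form of Olbrich's theorem.
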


This time the assumption that \(\rank_{k_i} \alg{G}_i = 0\) is likely to be essential because only in the cocompact case do analytic and cellular Novikov--Shubin invariants agree~\cite{Efremov:cell-decompositions} and only the analytic Novikov-Shubin invariants are entirely governed by the fundamental rank. Compare~\cite[Theorem~1.4]{Kammeyer:thesis}.  Given a semisimple Lie group \(G\) with symmetric space \(X = G/K\), let us set \(n = \dim X\) and let \(m = \delta(G)\) be the fundamental rank.  For a torsion-free cocompact lattice \(\Gamma \le G\), Olbrich~\cite[Theorem~1.1.(b)]{Olbrich:l2-symmetric} has shown in the analytic approach that \(\alpha_p(\Gamma) \neq \infty^+\) if and only if \(p \in [\frac{n-m}{2}, \frac{n+m}{2} -1]\).  Moreover, in this range we have \(\alpha_p(\Gamma) = m\).  By Selberg's lemma, the arithmetic groups \(\Gamma_i\) have torsion-free subgroups of finite index. Novikov--Shubin invariants are unchanged when passing to commensurable groups~\cite[Theorem~2.55\,(6)]{Lueck:l2-invariants}.  Since we show in Theorem~\ref{thm:f-rank-dim-mod-4} that \(m = \delta(G)\) is a profinite invariant for arithmetic subgroups of semisimple groups, Theorem~\ref{thm:novikov-shubin} follows. 

\subsection{Towards $S$-arithmetic groups and weakening CSP}
In general we do not know whether our results generalize from arithmetic to \(S\)-arithmetic groups. However,
we can extend our results in special cases. For example, for groups
\[ \Gamma_i = \Spin(q_i)(\bbZ[S_i^{-1}]) \]
where \(S_i\) are finite sets of rational primes and \(q_i\) are integral quadratic forms such that \(\Spin(q_i)\) has finite \(S_i\)-congruence kernel, we checked that still \(\sign \chi(\Gamma_1) = \sign \chi(\Gamma_2)\) whenever \(\Gamma_1\) and \(\Gamma_2\) are profinitely commensurable.  The proof is a case by case study invoking the classification of anisotropic quadratic forms over \(\bbQ_p\).  Interestingly and as opposed to the arithmetic case, for these \(S\)-arithmetic groups it is no longer true that the dimension of the symmetric space is a profinite invariant mod~4.  However, if \(\Gamma_1\) and \(\Gamma_2\) are profinitely commensurable and \(\dim X_1 \not\equiv \dim X_2 \bmod 4\), then there always exists a finite prime \(p \in S_1 \cap S_2\) such that \(\rank_{\bbQ_p} \alg{G}_1 \not\equiv \rank_{\bbQ_p} \alg{G}_2 \bmod 2\) so that still \(\sign \chi (\Gamma_1) = \sign \chi (\Gamma_2)\).  An example of this behavior is presented in Example \ref{dim_infty_not_profin}.

Another family of $S$-arithmetic groups for which we can establish profiniteness of the sign of the Euler characteristic is the following: fixing a (higher rank simple) \(\bbQ\)-group \(\alg{G}\), non-commensurable but profinitely commensurable \(S\)-arithmetic groups occur when \(\alg{G}\) is considered over varying number fields.  Methods due to Aka~\cite{Aka2012a} are used in~\cite{Kammeyer:commensurable} to show that these groups must be defined over arithmetically equivalent number fields \(k\) and \(l\). This implies profiniteness of \(\sign \chi(\Gamma)\) if \(S\) contains no places over ramified primes or if \(\alg{G}\) splits over \(k\) and \(l\).

It is unclear if these observations can be extended to general algebraic groups with CSP.  Notwithstanding, we can strengthen Theorems~\ref{thm:main-theorem} and~\ref{thm:sign-of-l2-torsion} formally by only requiring that one of the two groups be arithmetic and have CSP.

\begin{theorem} \label{thm:strengthening}
  Let \(\alg{G}_1\) and \(\alg{G}_2\) be linear algebraic groups defined over number fields \(k_1\) and \(k_2\).  Suppose \(\alg{G}_1\) has finite congruence kernel and that either $\alg G_2$ is reductive and each $k_2$-simple factor of the universal covering of its derived subgroup satisfies the Platonov--Margulis conjecture, or that $\alg G_2$ is not reductive.  Let \(\Gamma_1 \le \alg{G}_1\) be arithmetic and let \(\Gamma_2 \le \alg{G}_2\) be \(S\)-arithmetic for a finite set of places \(S\) of \(k_2\) containing all the infinite ones.
  \begin{enumerate}[(i)]
  \item If \(\Gamma_1\) and \(\Gamma_2\) are profinitely commensurable, then
    \[ \sign \chi(\Gamma_1) = \sign \chi(\Gamma_2). \]
  \item If in addition \(\rank_{k_1} \alg{G}_1 = \rank_{k_2} \alg{G}_2 = 0\) and \(\chi(\Gamma_1) = \chi(\Gamma_2) = 0\), then
    \[ \sign \rho^{(2)}(\Gamma_1) = \sign \rho^{(2)}(\Gamma_2). \]
    \end{enumerate}
\end{theorem}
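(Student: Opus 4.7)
The plan is to reduce to the previously established Theorems~\ref{thm:main-theorem} and~\ref{thm:sign-of-l2-torsion} by showing that, under the hypotheses, the $S$-arithmetic group $\Gamma_2$ is either commensurable to a genuine arithmetic group in an algebraic group with finite congruence kernel, or falls into a degenerate case in which both invariants vanish. After replacing $\Gamma_1$ and $\Gamma_2$ by suitable finite-index subgroups, I would assume $\widehat{\Gamma_1}\cong\widehat{\Gamma_2}$.

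For the reductive case, the Platonov--Margulis conjecture assumed for each $k_2$-simple factor of the universal cover of $[\alg{G}_2,\alg{G}_2]$ gives a tight description of the noncongruence normal subgroups of $\Gamma_2$; consequently, $\widehat{\Gamma_2}$ sits in a controlled way over its $S_2$-congruence completion, which in turn injects into an adelic group built from $\alg{G}_2$. Since $\alg{G}_1$ has finite congruence kernel, $\widehat{\Gamma_1}$ is a finite extension of an adelic congruence completion of $\Gamma_1$. Comparing the two adelic pictures through $\widehat{\Gamma_1}\cong\widehat{\Gamma_2}$ by the same technique Aka employs in the proof of Theorem~\ref{thm:main-theorem} should force $S_2$ to reduce to the infinite places of $k_2$ (so that $\Gamma_2$ is arithmetic) and $\alg{G}_2$ to have finite congruence kernel. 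With both groups now genuinely arithmetic with CSP, Theorems~\ref{thm:main-theorem} and~\ref{thm:sign-of-l2-torsion} deliver the conclusion.

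If $\alg{G}_2$ is not reductive, its unipotent radical $\alg{U}$ is nontrivial and contributes an infinite, finitely generated nilpotent normal subgroup $\Gamma_U$ coming from the $S_2$-integer points of $\alg{U}$. By multiplicativity of the Euler characteristic in extensions and its vanishing on infinite nilpotent groups, this forces $\chi(\Gamma_2)=0$; likewise $\rho^{(2)}(\Gamma_2)=0$ by the extension formula for $\ell^2$-torsion in presence of an infinite amenable normal subgroup. The pro-nilpotent closure of $\Gamma_U$ persists as a normal subgroup of $\widehat{\Gamma_2}\cong\widehat{\Gamma_1}$, and via the CSP of $\alg{G}_1$ it must originate in a unipotent part of $\alg{G}_1$; hence $\alg{G}_1$ is itself non-reductive and $\chi(\Gamma_1)=\rho^{(2)}(\Gamma_1)=0$. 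The sign equalities then hold trivially.

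The principal obstacle is the reductive case: squeezing from the profinite isomorphism with an arithmetic CSP group, together with the mere Platonov--Margulis property on the $\alg{G}_2$ side, the conclusion that $\Gamma_2$ is strictly arithmetic and that $\alg{G}_2$ enjoys full CSP. This requires a delicate adelic comparison of the two completions, and it is here that Aka's techniques, as refined in the proofs of Theorems~\ref{thm:main-theorem} and~\ref{thm:sign-of-l2-torsion}, do the heavy lifting.
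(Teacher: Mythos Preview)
Your overall strategy matches the paper's: use CSP on the $\alg{G}_1$ side to make $\widehat{\Gamma_1}$ adelic, transport this to $\widehat{\Gamma_2}$, and in the reductive case leverage the Platonov--Margulis hypothesis to force CSP for $\alg{G}_2$ and reduce $S$ to the infinite places, landing back in Theorem~\ref{thm:main-theorem}. Two points deserve sharpening, one of which is a genuine gap.

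\textbf{The non-reductive case.} You assert that the closure of $\Gamma_U$ in $\widehat{\Gamma_1}$ ``must originate in a unipotent part of $\alg{G}_1$; hence $\alg{G}_1$ is itself non-reductive.'' This inference is unjustified and, as stated, false: a closed normal solvable subgroup of $\widehat{\Gamma_1}$ need not come from a unipotent radical (a central torus would do just as well), and a priori it need not come from any discrete subgroup of $\Gamma_1$ at all. The missing ingredient is a nontrivial lemma (Lemma~\ref{lem: normal solvable subgroups} in the paper): if $\alg{G}$ is \emph{semisimple} with finite congruence kernel, then $\widehat{\Gamma}$ has \emph{no} topologically finitely generated infinite closed normal solvable subgroup. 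Combined with Lemma~\ref{lem: from reductive to semisimple} (absence of such a discrete subgroup forces reductivity and finite index of the derived part), one argues by contradiction: if $\Gamma_1$ had no finitely generated infinite normal solvable subgroup, one could reduce to the semisimple case and the profinite solvable normal subgroup coming from $\Gamma_U$ would be impossible. Hence $\Gamma_1$ \emph{does} have such a subgroup, and Cheeger--Gromov gives $\chi(\Gamma_1)=0$. Note the paper actually organises the whole proof around this dichotomy for $\Gamma_1$ rather than around reductivity of $\alg{G}_2$; this avoids having to pull structure back from $\widehat{\Gamma_1}$ to $\Gamma_1$.

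\textbf{The reductive case.} ``Aka's techniques'' is too vague for the key step. The precise mechanism the paper uses (Theorem~\ref{thm: lubotzky}) is: $\widehat{\Gamma_2}$ adelic $\Rightarrow$ $\widehat{\Gamma_2}$ boundedly generated (Lubotzky), and bounded generation together with the Platonov--Margulis conjecture for each simple factor forces finite congruence kernel (Platonov--Rapinchuk). This is applied after passing to the simply connected cover of $\calD(\alg{G}_2)$, which is where the Platonov--Margulis hypothesis lives. Only then does Proposition~\ref{prop:Lie-algebras-isomorphic} force $S$ to contain no finite places. Your sketch gestures at an ``adelic comparison'' but does not identify bounded generation as the bridge from adelicity to CSP; without it the reduction does not go through.
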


See \cite{PlatonovRapinchuk} for an introduction to the Platonov--Margulis conjecture and \cite[Appendix A]{Rapinchuk_Segev} for a shorter and more up-to-date survey. We note that while this conjecture is still open in some case its status is still better than that of the congruence subgroup property; in particular it is known to hold for inner forms of type $A_n$. Unlike Theorem~\ref{thm:main-theorem} the above result can be applied when the $\mathbb R$-points of the Weil restriction of  $\alg G_2$ are of real rank one; Theorem~\ref{thm:main-theorem} is not applicable since real and complex hyperbolic lattices often do not have CSP (and are conjectured to never have it). 

\subsection{Comments on rank one groups}
Finally, the question occurs whether the assumption of CSP in Theorem \ref{thm:main-theorem} can be removed, which would essentially boil down to understanding the case of rank 1 semisimple Lie groups. Taking the classification of rank 1 semisimple real Lie groups into account, the profiniteness of the sign of Euler characteristic or \( L^2 \)-torsion reduces to the question of profiniteness of the dimension of the symmetric space modulo~4. 

However, the techniques used to prove such a statement would by necessity be very different from the rigidity results used in higher rank, except possibly for lattices in the quaternionic hyperbolic spaces and the octonionic hyperbolic plane. There has already been much work on this topic, or related topics; some topological profinite invariants for 3--manifold groups are given in \cite{Boileau_Friedl}. Let us also mention the following results of interest. 

\begin{enumerate}
\item Recent work of Bridson--McReynolds--Reid--Spitler \cite{BmRRS} shows that profinite rigidity holds for certain Kleinian groups, including the Weeks manifold group. Note that profinite rigidity is generally hard to establish. It is, for example, open whether free groups or more generally Fuchsian groups, Kleinian groups, \(\textup{SL}(n,\bbZ)\), or mapping class groups of closed surfaces are profinitely rigid. 
\item The question becomes more accessible if one only asks for profinite rigidity \emph{among} a certain class of groups. In this vein, Bridson--Reid~\cite{Bridson-Reid:figure-eight} had previously shown that the figure-eight knot group is a Kleinian group which is profinitely rigid among 3-manifold groups. In general, it is not even known whether Kleinian groups are profinitely rigid among themselves.  In fact, it is open whether the volume of hyperbolic 3-manifolds is \emph{profinite} in the sense that it agrees for two such manifolds whose fundamental groups have isomorphic profinite completions.
\item Fuchsian groups are profinitely rigid among lattices in Lie groups and S-arithmetic groups; this follows from profiniteness invariance of the first \( L^2 \)-Betti number, which distinguishes them from lattices in other Lie groups, and the work of Bridson--Conder--Reid \cite{Bridson-Conder-Reid:Fuchsian} which distinguishes them between themselves. 

\item It follows from the work of Bergeron--Haglund--Wise \cite{Bergeron_Haglund_Wise} and Minasyan--Zaleskii \cite{Minasyan_Zalesskii} that arithmetic lattices of simple type in $\mathrm{SO}(n, 1)$ (for any $n$) are cohomologically good, in particular their profinite completion knows their virtual cohomological dimension, which equals $n$ for a uniform lattice and $n - 1$ for a non-uniform one.
  It is well-known that Fuchsian groups are good. It also follows from the work of Agol \cite{Agol_VH} together with that of Minasyan--Zalesskii that lattices in \( \mathrm{SO}(3, 1) \) are cohomologically good.

\item Recently M.~Stover \cite{Stover} gave examples, for any $n \ge 2$, of a pair of lattices in $\mathrm{PU}(n, 1)$ which are profinitely isomorphic, but not commensurable to each other. 
\end{enumerate}
\subsection{Outline}
In Section~\ref{section:profinite-invariance} we establish profinite invariance of the fundamental rank of the associated Lie groups as well as profinite invariance of the dimension of the associated symmetric space mod~4 in the semisimple case.  Section~\ref{section:conclusion} then derives the main results as straightforward conclusions from the previous section.  In Section~\ref{section:euler-computation} we explicitly compute the Euler characteristic of the arithmetic spin groups \(\Gamma_{m,n}\).

\subsection*{Acknowledgements}
H.K., S.K., and R.S.~acknowledge 
funding by the Deutsche Forschungsgemeinschaft (DFG, German Research Foundation) –~281869850 (RTG 2229) and 338540207. 
 J.R.~acknowledges support by ANR grant ANR-16-CE40-0022-01 - AGIRA.  


\section{Profinite invariance of fundamental rank and dimension mod 4} \label{section:profinite-invariance}
For better reference, we have chosen to formulate our results in the introduction in terms of number fields.  But as is well known, given a number field \(k\) and a linear algebraic \(k\)-group \(\alg{G}\), the restriction of scalars functor \(\Res^k_{\bbQ}\) as for instance introduced in~\cite[Section~2.1.2, p.\,49]{PlatonovRapinchuk} comes with a natural isomorphism \(\Res^k_{\bbQ} \alg{G} (\bbQ) \cong \alg{G}(k)\) which preserves the notion of arithmetic subgroup and satisfies, moreover, \(C(\Res^k_{\bbQ} \alg{G}, \,\bbQ) \cong C(\alg{G},\,k)\) for the congruence kernels.
Thus every arithmetic subgroup of a \(k\)-group is isomorphic to an arithmetic subgroup of a \(\bbQ\)-group and the former has finite congruence kernel if and only if the latter does.  These remarks justify that henceforth we will work over \(k = \bbQ\) only.  As an outcome of the introduction we see that the following theorem is the main technical result we need to attack.

\begin{theorem}\label{thm:f-rank-dim-mod-4}
  Let \(\Gamma_1 \le \alg{G}_1\) and \(\Gamma_2 \le \alg{G}_2\) be arithmetic subgroups of semisimple linear algebraic \(\bbQ\)-groups with finite congruence kernel.  If \(\Gamma_1\) is profinitely commensurable with \(\Gamma_2\), then
  \begin{enumerate}[(i)]
  \item $\dim X_1 \equiv \dim X_2 \bmod 4$ and
  \item $\delta(G_1) = \delta(G_2)$.
  \end{enumerate}
\end{theorem}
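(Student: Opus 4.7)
The plan is to promote profinite commensurability of $\Gamma_1$ and $\Gamma_2$ to local isomorphisms of the ambient algebraic groups at almost all finite primes, and then to use a Hasse principle argument to constrain the remaining archimedean data. Both $\delta$ and $\dim X \bmod 4$ are additive over $\bbQ$-simple factors and invariant under isogeny, so after matching the $\bbQ$-simple factors of $\alg{G}_1$ and $\alg{G}_2$ via the profinite data I would reduce to the case where each $\alg{G}_i$ is absolutely almost simple and simply connected.

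Under CSP the profinite completion $\widehat{\Gamma_i}$ is commensurable with the congruence completion, which by strong approximation is an open subgroup of $\prod_p \alg{G}_i(\bbZ_p)$ for a suitable integral model. Profinite commensurability of the $\Gamma_i$ therefore yields an isomorphism between open subgroups of these restricted products, hence isomorphisms of compact $p$-adic analytic groups $\alg{G}_1(\bbZ_p) \cong \alg{G}_2(\bbZ_p)$ for all but finitely many primes $p$. Passing to Lie algebras via the $p$-adic logarithm, one reads off that $\alg{G}_1$ and $\alg{G}_2$ share the same absolute type, the same dimension and the same absolute rank, and become isomorphic as $\bbQ_p$-groups at every unramified $p$. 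An extra argument at the remaining finitely many primes, using the $p$-adic analytic structure of their open pro-$p$ subgroups, then pins down the $\bbQ_p$-isomorphism class of $\alg{G}_i$ at every finite prime.

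The heart of the proof is to propagate this control to the archimedean place. By the Hasse principle for semisimple groups, together with the product formulas for Brauer classes and for Hasse--Witt invariants, once the local isomorphism class of $\alg{G}_i$ is fixed at every finite prime, the archimedean form $\alg{G}_i(\bbR)$ can only vary within a parity-constrained set of real forms of the common absolute type. I would then run through the Killing--Cartan types and check, in each case, that any two real forms compatible with the same finite-place data share the same fundamental rank $\delta$ and have symmetric spaces whose dimensions agree modulo~4. For the classical types $A$, $B$, $C$, $D$ this reduces to Hilbert reciprocity applied to Hasse--Witt invariants or to Brauer classes, combined with an inspection of the real-form tables: e.g.\ for spin groups the signature $(m,n)$ of the defining quadratic form is constrained modulo $2$ by the finite data, and a short computation shows that $mn = \dim X$ is then constant modulo~$4$ and that $\delta$ is determined. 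The exceptional types are handled analogously via the Tits classification over $\bbR$.

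The main obstacle is this type-by-type verification. The delicate point is that only the parity of certain archimedean invariants (the signature of a defining quadratic form mod~$2$, or the local Brauer index mod~$2$) is determined by the finite-place data. Yet a direct inspection of the dimension formulas for all simple real Lie algebras shows that this parity information is precisely sharp enough to force $\delta$ to be constant and $\dim X$ to be constant modulo~$4$. That the correct modulus is $4$ rather than $2$ is a non-trivial statement dictated by the explicit arithmetic of the classification of real forms and has to be checked by hand for each absolute type.
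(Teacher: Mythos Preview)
Your high-level strategy---use CSP and strong approximation to turn profinite commensurability into local isomorphisms at the finite primes, and then constrain the archimedean form by a reciprocity argument---is exactly what the paper does. Where you diverge is in the endgame: you propose a type-by-type run through the Killing--Cartan classification, while the paper produces two uniform arguments that make any such case check unnecessary.

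For $\dim X \bmod 4$, the paper applies Weil's product formula directly to the \emph{Killing form} of the Lie algebra. Since $\Lie(\alg{G}_1)(\bbQ_p)\cong\Lie(\alg{G}_2)(\bbQ_p)$ for all $p$, the Killing forms are isometric over every $\bbQ_p$, so their signatures over $\bbR$ agree modulo~$8$; as the signature equals $\dim X_i - \dim K_i$ and $\dim X_i + \dim K_i$ is the common dimension, one gets $\dim X_1 \equiv \dim X_2 \bmod 4$ in one line, uniformly in the type.

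For $\delta$, the paper first shows (via Chebotarev and the short exact sequence for $\Aut$ of a split group) that the $\bbQ_p$-isomorphisms force $\alg{G}_1\times_\bbQ\bbR$ and $\alg{G}_2\times_\bbQ\bbR$ to be \emph{inner} forms of one another. It then proves a purely Lie-theoretic fact: two real forms related by an inner twist have maximal compact subgroups of the same complex rank. This again handles all types simultaneously. Your proposal would, in each classical type, essentially rediscover the first argument disguised as a Hasse--Witt or Brauer computation, and would have to invent an ad hoc substitute for the second.

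Two smaller points. First, the reduction to \emph{absolutely} almost simple factors is unnecessary and not obviously harmless: $\bbQ$-simple factors are restrictions of scalars, and matching them individually from the profinite data would itself require an argument. The paper never makes this reduction; it only removes $\bbR$-anisotropic factors and then works with the full semisimple group. Second, you only claim isomorphism at almost all primes and defer the rest to an ``extra argument''; in fact the paper's Lemma on products of $p$-adic analytic groups already gives a virtual isomorphism, hence a $\bbQ_p$-Lie algebra isomorphism, at \emph{every} prime directly, with no exceptional set to clean up.
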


Here \(X_i = G_i/K_i\) is the \emph{symmetric space} associated with \(\alg{G}_i\).  It is defined by choosing a maximal compact subgroup \(K_i \subseteq G_i\) of the Lie group \(G_i = \alg{G}_i(\bbR)\).  The number
\[ \delta(G_i) =  \rank_\bbC (\mathcal{L}(G_i) \otimes_\bbR \bbC) - \rank_\bbC (\mathcal{L}(K_i) \otimes_\bbR \bbC) \]
is called the \emph{fundamental rank} of \(X_i\), sometimes also known as the \emph{deficiency} of \(G_i\).  The notation \(\mathcal{L}(G_i)\) and \(\mathcal{L}(K_i)\) denotes the Lie algebras of the Lie groups \(G_i\) and \(K_i\).\label{page:fundamental-rank}

The rough outline of the proof of Theorem~\ref{thm:f-rank-dim-mod-4} is as follows.  We first show that under the assumption of CSP and strong approximation, the profinite commensurability of \(\Gamma_1\) and \(\Gamma_2\) implies that the Lie algebras of the \(p\)-adic analytic groups \(\alg{G}_1(\bbQ_p)\) and \(\alg{G}_2(\bbQ_p)\) are isomorphic for all finite primes~\(p\) (Proposition~\ref{prop:Lie-algebras-isomorphic}).  Weil's product formula expresses the signature of a rational quadratic form mod 8 in terms of Gaussian sums associated with the \(\bbF_p\)-reductions of the form.  Applying this formula to the Killing forms of the Lie algebras of \(\alg{G}_1\) and \(\alg{G}_2\), we can conclude that \(\dim X_1 \equiv \dim X_2 \bmod 4\) (Proposition~\ref{prop:same-sign}).  To show that \(\delta(G_1) = \delta(G_2)\), we first explain that if \(\alg{G}_1(\bbQ_p) \cong \alg{G}_2(\bbQ_p)\) for all \(p\), then of necessity \(\alg{G}_1 \times_\bbQ \bbR\) and \(\alg{G}_2 \times_\bbQ \bbR\) are inner forms of one another (Proposition~\ref{prop:inner-forms}).  Therefore, if we fix an isomorphism \(\varphi \colon \alg{G}_1 \times_\bbQ \bbC \rightarrow \alg{G}_2 \times_\bbQ \bbC\) and if \(\tau_i\) denotes the complex conjugation satisfying \(\alg{G}_i(\bbC)^{\tau_i} = \alg{G}_i(\bbR)\), then \(\tau_1\) and \(\varphi^{-1} \tau_2 \varphi\) are conjugate by an inner automorphism.  In that case the maximal compact subgroups of \(\alg{G}_i(\bbR)\) have the the same rank as we verify in Proposition~\ref{prop:same-rank}, so that equality of fundamental ranks follows.

To begin with, we verify that in a typical situation, isomorphisms of products of \(p\)-adic Lie groups must be factor-wise.  The notation ``\(\le_o\)'' and ``\(\le_c\)'' indicates open and closed subgroups, respectively.

\begin{lemma}\label{lem:isomorphic-lie}
  Let $G$ be a profinite group with open subgroups $A,B \leq_o G$.
  Suppose that 
  $$ A = \prod_{p} G_p  \quad \text{ and } \quad B = \prod_{p} H_p, $$
  for certain $p$-adic analytic groups $G_p, H_p \leq_c G$ where the product runs over all
  prime numbers.  Then $H_p$ and $G_p$ are virtually isomorphic for all primes $p$.
  In particular, the Lie algebras $\calL(H_p)$ and $\calL(G_p)$ are $\bbQ_p$-isomorphic.
\end{lemma}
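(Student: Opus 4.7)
The plan is, for each prime \(p\), to produce an open subgroup of \(G_p\) topologically isomorphic to an open subgroup of \(H_p\); the \(\bbQ_p\)-Lie algebra statement then follows at once, since \(\calL(G_p)\) is determined by any open subgroup of \(G_p\). Set \(C := A \cap B\), which is open in \(G\) and hence in both \(A\) and \(B\). Because \(A = \prod_q G_q\) carries the product topology, openness of \(C\) in \(A\) forces the existence of \(N \in \bbN\) and open subgroups \(V_q \leq_o G_q\) such that \(\prod_{q \leq N} V_q \times \prod_{q > N} G_q \subseteq C\); by symmetry and enlarging \(N\), the analogous inclusion coming from \(B\) also holds. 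In particular, for every prime \(p > N\) both factors \(G_p\) and \(H_p\) lie inside \(C\); the finitely many small primes \(p \leq N\) are handled by the same argument after replacing \(G_p\) and \(H_p\) with the open subgroups \(V_p \leq_o G_p\) and \(W_p \leq_o H_p\).

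Fix such a prime \(p\). Composing the inclusion \(G_p \hookrightarrow C \hookrightarrow B\) with the projection \(B = \prod_q H_q \twoheadrightarrow H_p\) yields a continuous homomorphism \(\phi \colon G_p \to H_p\); symmetrically one obtains \(\psi \colon H_p \to G_p\). The elementary but crucial input is the following: for any primes \(p \neq q\), every continuous homomorphism from a virtually pro-\(p\) group into a virtually pro-\(q\) group has finite image, because an open pro-\(p\) subgroup of the source maps to a pro-\(p\) subgroup of the target, which intersected with an open pro-\(q\) subgroup is simultaneously pro-\(p\) and pro-\(q\) and is therefore trivial. Applied to each composition \(G_p \hookrightarrow B \twoheadrightarrow H_q\) for \(q \neq p\), this shows that all off-diagonal factor projections of \(G_p\) (and symmetrically of \(H_p\)) are pointwise finite.

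The main task is to show that \(\phi\) has open image and finite kernel, from which \(\phi\) restricts to a topological isomorphism between open subgroups of \(G_p\) and \(H_p\). I would attack this by combining both projections into the diagonal map \((\pi_p^A, \pi_p^B) \colon C \to G_p \times H_p\) and applying Goursat's lemma. Both coordinate projections are onto open subgroups of \(G_p\) and \(H_p\), as one sees by restricting to the basic open subgroups of \(C\) from the first step. Goursat then identifies the image of the diagonal as the graph of an isomorphism between open subgroups of \(G_p\) modulo \(\pi_p^A(C \cap \prod_{q \neq p} H_q)\) and of \(H_p\) modulo \(\pi_p^B(C \cap \prod_{q \neq p} G_q)\). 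Running the symmetric analysis of \(\psi\) together with a dimension count forces both Goursat kernels to be finite, giving \(\dim G_p = \dim H_p\) and finiteness of \(\ker \phi\); virtual isomorphism and the Lie algebra isomorphism follow.

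The principal obstacle is the control of these two Goursat kernels. The pointwise finiteness of each individual projection \(G_q \to H_p\) for \(q \neq p\) does not by itself bound the closed subgroup of \(H_p\) that these finite images generate, since in a general \(p\)-adic Lie group the subgroup generated by torsion can be as large as the whole group. Ruling out this pathology crucially uses that \(A\) and \(B\) are both open inside the same profinite \(G\), not merely abstractly presented as products: the resulting commensurability of pro-\(p\) Sylow subgroups of \(A\) and \(B\) in \(G\), combined with the fact that the cross-prime Sylow contributions \(S_p(G_q)\) and \(S_p(H_q)\) for \(q \neq p\) are finite in each individual factor, is what confines the Goursat kernels to the finite torsion pieces and closes the argument.
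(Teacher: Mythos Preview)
Your framework is sensible and the first steps are fine, but the proof is incomplete precisely at the point you yourself flag as ``the principal obstacle.'' You correctly observe that knowing each projection \(G_p \to H_q\) (for \(q \neq p\)) has finite image does not bound the closed subgroup these images generate; your proposed fix via commensurability of Sylow pro-\(p\) subgroups does not close this gap. Concretely, a product \(\prod_{q \neq p} S_q\) of finite \(p\)-groups can contain a closed copy of \(\bbZ_p\): take \(S_q = \bbZ/p^{q}\bbZ\) and send \(1 \mapsto (1,1,\dots)\). So a closed \(p\)-adic analytic subgroup sitting inside \(\prod_{q \neq p} H_q\) is not forced to be finite on Sylow grounds alone, and your Goursat kernels are not controlled. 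Your final paragraph asserts that the common ambient group \(G\) rules this pathology out but does not explain how; the ``dimension count'' you invoke only yields \(\dim G_p - \dim N_1 = \dim H_p - \dim N_2\), which is one equation in two unknowns.

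The paper's argument supplies exactly the missing mechanism. After arranging \(B \subseteq A\) (by shrinking) and assuming without loss of generality \(\dim G_p \geq \dim H_p\), one fixes an open \emph{uniform}, hence torsion-free, pro-\(p\) subgroup \(U_p \trianglelefteq_o G_p\) and passes to the \emph{finite} quotient \(G_p/U_p\). Continuity of the composite \(B \to G_p \to G_p/U_p\) forces \(\prod_{\ell \notin S} H_\ell\) to land in \(U_p\) for some finite set \(S\) of primes; torsion-freeness of \(U_p\) then kills each of the finitely many remaining cross-images \(\pi(H_\ell)\) with \(\ell \in S \setminus \{p\}\), since these are finite subgroups of \(U_p\). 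Hence \(\pi(\prod_{\ell \neq p} H_\ell)\) is finite and \(\pi(H_p)\) is open in \(G_p\); a comparison of dimensions of uniform subgroups then shows \(\pi\) restricts to an isomorphism on an open subgroup of \(H_p\). The two ideas you are missing are: (i) pass to a finite quotient first, so that continuity handles all but finitely many primes simultaneously, and (ii) exploit torsion-freeness of a uniform subgroup, not merely finiteness of the individual images, to annihilate the remaining cross-terms.
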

\begin{proof}
  Let $p$ be a prime. We may assume that $\dim(G_p) \geq \dim(H_p)$. After possibly shrinking $B$, we may assume
  $B \subseteq A = G$.  Let $\pi \colon G \to G_p$ denote the projection homomorphism.  For
  every prime $\ell \neq p$ the image $\pi(H_\ell)$ is $p$-adic and $\ell$-adic analytic,
  hence a finite group.  Moreover, let $U_p \normal_o G_p$ be an open normal uniform pro-$p$
  subgroup. Recall that such a group is torsion-free.  The group $H_\ell$, however, does not
  contain pro-$p$ elements of infinite order and thus $\pi(H_\ell) \cap U_p = \{1\}$.

  Now we consider the homomorphism $\overline{\pi}\colon G \to G_p/U_p$ composed from $\pi$
  and the canonical factor map $G_p \to G_p/U_p$.  Since $\overline{\pi}$ is continuous, there
  is a finite set of primes $S$ (with $p \in S$) such that
  $$\overline{\pi}(\prod_{\ell \not\in S} H_\ell) = \{1\}.$$
  It follows that $\pi(\prod_{\ell\neq p} H_\ell)$ is finite.  However, the homomorphism $\pi$
  is surjective and we deduce that $\pi(H_p)$ is an open subgroup of $G_p$.

  Choose an open normal uniform pro-$p$ subgroup $V_p \normal_o H_p$ such that $\pi(V_p) \subseteq U_p$.
  Since $V_p$ is finitely generated powerful and $U_p$ is torsion-free, we deduce that $\pi(V_p)$
  is a finitely generated, powerful (see \cite[Definition 2.1]{DDMS2003}), torsion-free pro-$p$ group. By \cite[Theorem~4.5]{DDMS2003} we get that
  $\pi(V_p)$ is a uniform subgroup in $U_p$, and as it is also open we have $\dim \pi(V_p) = \dim U_p$, so $\dim V_p \ge  \dim U_p$. By assumption $\dim(V_p) \leq \dim(U_p)$ and we conclude that
  the dimensions are equal and that $\pi|_{V_p}$ is an isomorphism onto its image. 
\end{proof}

\begin{remark}\label{rem:algebraic-vs-analytic}
For an affine group scheme $\alg{H}$ over a commutative ring $R$, the Lie algebra (functor)
will be denoted by $\Lie(\alg{H})$.
For a Lie group $U$ over a complete valuated field $k$, e.g.  $\bbR$, $\bbC$ or $\bbQ_p$, the
associated $k$-Lie algebra will be denoted by $\calL(U)$.
Recall that, if $\alg{G}$ is a linear algebraic group over $k$,
then $\alg{G}(k)$ is a $k$-analytic Lie group and 
$$ \Lie(\alg{G})(k) \cong \calL(\alg{G}(k)).$$
\end{remark}


A little more general than necessary, we will now see that assuming CSP and strong approximation, profinitely commensurable \(S\)-arithmetic subgroups lie in algebraic groups whose Lie algebras become isomorphic when completing the field outside \(S\). 

\begin{proposition}\label{prop:Lie-algebras-isomorphic}
  Let \(S_1\) and \(S_2\) be finite sets of places of \(\bbQ\) containing the infinite one and let
  \(\alg{G}_1\) and \(\alg{G_2}\) be algebraic \(\bbQ\)-groups.
  Assume  $\alg{G}_i$ has finite $S_i$-congruence kernel and strong approximation
        w.\,r.\,t.~$S_i$.  
  Suppose \(\alg{G}_1\) and \(\alg{G}_2\) have profinitely commensurable \(S_1\)- and \(S_2\)-arithmetic subgroups. 
  Then \(S_1 = S_2\) and $\Lie(\alg{G}_1)(\bbQ_p) \cong \Lie(\alg{G}_2)(\bbQ_p)$ for $p \not\in S_1$.
\end{proposition}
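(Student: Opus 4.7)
The plan is to deduce the proposition directly from Lemma~\ref{lem:isomorphic-lie} once we have exhibited, for $i=1,2$, an open subgroup of $\widehat{\Gamma_i}$ of the form $\prod_p V_p^{(i)}$, with each $V_p^{(i)}$ a $p$-adic analytic group that is trivial precisely when $p \in S_i$ and has dimension $\dim \alg{G}_i$ otherwise. Once this decomposition is available on both sides, the lemma, applied inside the common profinite group furnished by profinite commensurability, does the remaining work.

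First I would use the finite $S_i$-congruence kernel hypothesis, which provides a surjection $\pi_i \colon \widehat{\Gamma_i} \twoheadrightarrow \overline{\Gamma_i}^{S_i}$ onto the $S_i$-congruence completion whose kernel is finite. Next I would invoke strong approximation of $\alg{G}_i$ with respect to $S_i$: after replacing $\Gamma_i$ by a suitable finite index subgroup commensurable with a principal congruence subgroup, its image $\overline{\Gamma_i}^{S_i}$ is isomorphic to a product $\prod_{p \notin S_i} U_p^{(i)}$, where $U_p^{(i)}$ is open in $\alg{G}_i(\bbZ_p)$ and equals $\alg{G}_i(\bbZ_p)$ for almost all $p$. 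Choosing open uniform pro-$p$ subgroups $V_p^{(i)} \leq U_p^{(i)}$, pulling back along $\pi_i$, and further intersecting with an open subgroup that trivially meets the finite kernel of $\pi_i$ produces the desired open subgroup $A_i = \prod_p V_p^{(i)} \leq_o \widehat{\Gamma_i}$.

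Passing to a pair of finite index subgroups witnessing the profinite commensurability, we then identify $\widehat{\Gamma_1} \cong \widehat{\Gamma_2} =: G$ so that $A_1$ and $A_2$ both become open subgroups of $G$ of the required form. Lemma~\ref{lem:isomorphic-lie} directly implies that $V_p^{(1)}$ and $V_p^{(2)}$ are virtually isomorphic with $\bbQ_p$-isomorphic Lie algebras, for every prime $p$. If some $p$ were in $S_1 \setminus S_2$, then $V_p^{(1)}$ would be trivial while $V_p^{(2)}$ would be a positive-dimensional $p$-adic analytic group (of dimension $\dim \alg{G}_2 > 0$), a contradiction; by symmetry $S_1 = S_2$. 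For $p \notin S_1 = S_2$, Remark~\ref{rem:algebraic-vs-analytic} identifies $\calL(V_p^{(i)})$ with $\Lie(\alg{G}_i)(\bbQ_p)$, yielding the desired Lie algebra isomorphism.

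The main obstacle is coordinating the several choices entering the product decomposition $A_i = \prod_p V_p^{(i)}$: the congruence subgroup of $\Gamma_i$, the open compact subgroups $U_p^{(i)}$, their uniform pro-$p$ refinements $V_p^{(i)}$, and the lift through the finite congruence kernel must all be chosen compatibly so that $A_i$ genuinely splits as a topological direct product of $p$-adic analytic groups of the expected dimensions. Everything else is a short consequence of Lemma~\ref{lem:isomorphic-lie} applied factor-wise.
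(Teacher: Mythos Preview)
Your proposal is correct and follows essentially the same route as the paper: use the finite congruence kernel to pass to the congruence completion, use strong approximation to obtain an open subgroup of the form $\prod_{p\notin S_i} U_p^{(i)}$ inside $\widehat{\Gamma_i}$, and then apply Lemma~\ref{lem:isomorphic-lie} together with Remark~\ref{rem:algebraic-vs-analytic}. The paper is terser (it simply replaces $\Gamma_i$ by a deep enough congruence subgroup so that $\widehat{\Gamma_i}$ itself sits as an open subgroup of $\alg{G}_i(\bbA_{S_i})$, rather than pulling back through $\pi_i$ and refining to uniform pro-$p$ pieces), and it dispatches $S_1=S_2$ with a single ``clearly'', but the substance is the same.
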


\begin{proof}
Choose \(S_i\)-arithmetic subgroups $\Gamma_i \subseteq \alg{G}_i(\bbQ)$ such that $\widehat{\Gamma}_1 \cong \widehat{\Gamma}_2$. Since the congruence kernels are finite, we can pass to finite index subgroups if need be, to assume that $\widehat{\Gamma}_i$ is (isomorphic to) the closure of $\Gamma_i$ in $\alg{G}_i(\bbA_{S_i})$. Strong approximation implies that $\widehat{\Gamma}_i$ is an open subgroup of $\alg{G}_i(\bbA_{S_i})$ where \(\bbA_{S_i} = \prod_{p \not\in S_i} (\bbQ_p:\bbZ_p)\) denotes the ring of \(S_i\)-adeles. In particular, it has an open subgroup which is isomorphic to a product $\prod_{p\not\in S_i} U^{(i)}_p$ for certain open compact subgroups $U^{(i)}_p \leq_o \alg{G}_i(\bbQ_p)$.  So clearly \(S_1 = S_2\) and Lemma~\ref{lem:isomorphic-lie} and Remark~\ref{rem:algebraic-vs-analytic} complete the proof.
\end{proof}

The following is a variation of an observation of Rohlfs-Speh \cite[Lemma 2.5]{RohlfsSpeh1989}; see also
 \cite[Lemma 4]{Kionke2014}.
\begin{proposition}\label{prop:same-sign}
  Let $\alg{G}_1$ and $\alg{G}_2$ be semisimple linear algebraic groups over $\bbQ$ with associated
  symmetric spaces $X_1 = G_1/K_1$ and $X_2 = G_2/K_2$.  If $\Lie(G)(\bbQ_p) \cong \Lie(H)(\bbQ_p)$ for all $p$, then \[\dim X_1 \equiv \dim X_2 \bmod 4.\]
\end{proposition}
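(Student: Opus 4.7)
The plan is to study the Killing forms $B_i$ of the $\bbQ$-Lie algebras $\Lie(\alg{G}_i)(\bbQ)$ as rational quadratic forms. Semisimplicity of $\alg{G}_i$ guarantees that each $B_i$ is non-degenerate, and since the Killing form is canonically built from the bracket, any Lie algebra isomorphism is automatically an isometry of Killing forms. The hypothesis therefore implies that $B_1$ and $B_2$ become isometric as $\bbQ_p$-quadratic forms for every finite prime $p$. In particular, they share dimension, discriminant in $\bbQ^{\times}/(\bbQ^{\times})^2$, and Hasse--Witt invariants at every finite place.

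First I would invoke Weil's product formula---equivalently, Hilbert reciprocity for the Hasse--Witt invariant of a non-degenerate rational quadratic form---which asserts that the product of local Hasse invariants over all places of $\bbQ$ equals $+1$. Applied to $B_1$ and $B_2$ separately, the matching of Hasse invariants at every finite place forces the archimedean invariants to agree as well. Since over $\bbR$ the dimension, discriminant, and Hasse--Witt invariant jointly determine the signature modulo~$8$, this yields
\[ \sigma(B_1) \equiv \sigma(B_2) \bmod 8. \]
This is precisely the mechanism underlying the Rohlfs--Speh type lemma cited after the statement.

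Next I would pin down the signatures explicitly. The Cartan decomposition $\calL(G_i) = \fk_i \oplus \fp_i$ makes $B_i$ negative definite on $\fk_i$ and positive definite on $\fp_i$, so
\[ \sigma(B_i) = \dim \fp_i - \dim \fk_i = \dim X_i - \dim K_i. \]
The local Lie algebra isomorphism at any single prime also forces equality of total dimensions, that is $\dim X_1 + \dim K_1 = \dim X_2 + \dim K_2$. Adding this equation to the mod~$8$ congruence above produces
\[ 2(\dim X_1 - \dim X_2) \equiv 0 \bmod 8, \]
which is exactly $\dim X_1 \equiv \dim X_2 \bmod 4$.

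The main obstacle is the first step: one must carefully set up the exact version of Weil's product formula that is needed (matching dimension, discriminant, and finite Hasse invariants forces matching archimedean Hasse invariant, and hence signatures that are congruent modulo~$8$), paying particular attention to the sign ambiguity in the discriminant and to the behaviour at the prime $2$. The remaining steps are purely structural and essentially bookkeeping.
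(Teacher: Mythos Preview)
Your proposal is correct and follows essentially the same route as the paper: use the Killing form as a rational quadratic form, observe that the local Lie algebra isomorphisms make the Killing forms isometric over every $\bbQ_p$, invoke Weil's product formula (the paper cites Scharlau) to conclude the signatures agree modulo~$8$, and then combine $\sigma(B_i)=\dim X_i-\dim K_i$ with the equality of total dimensions to get $\dim X_1\equiv\dim X_2\bmod 4$. Your unpacking of the product formula via Hasse--Witt invariants and Hilbert reciprocity is simply an explicit version of what the paper compresses into a single citation.
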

\begin{proof}
  We note that $\Lie(\alg{G}_i)(k) = k \otimes_\bbQ \Lie(\alg{G}_i)(\bbQ)$ for every extension field $k$ of
  $\bbQ$.  The Killing forms $\beta_i$ on $\Lie(\alg{G}_i)(\bbQ)$ are non-degenerate symmetric bilinear forms defined over $\bbQ$.  The Cartan decomposition
  implies that $\beta_i$ has signature
  $(\dim(X_i), \dim(K_i))$ as a form on $\Lie(\alg{G}_i)(\bbR)$.

  The Killing form is completely determined by the Lie algebra structure, hence the quadratic
  spaces $(\Lie(\alg{G}_1)(\bbQ_p),\beta_1)$ and $(\Lie(\alg{G}_2)(\bbQ_p),\beta_2)$ are isometric for every prime
  number $p$.  Weil's product formula implies that
  $\dim(X_1)-\dim(K_1) \equiv \dim(X_2)-\dim(K_2) \bmod 8$; see \cite[Corollary 8.2]{Scharlau1985}.  Let $d = \dim(G_1) = \dim (G_2)$, then
  $d = \dim(X_1) + \dim(K_1) = \dim(X_2) + \dim(K_2)$ and we deduce that
  \[2 \dim(X_1) \equiv \dim(X_1) - \dim(K_1) + d \equiv 2 \dim(X_2) \bmod 8. \qedhere \]
\end{proof}


\begin{definition}
  Let $\alg{G}_1$, $\alg{G}_2$ be linear algebraic groups over a field $k$ of characteristic $0$. We say that $\alg{G}_2$ is an \emph{inner form} of $\alg{G}_1$, if there is an isomorphism $\varphi \colon \alg{G}_1 \times_k \overline{k} \to \alg{G}_2 \times_k \overline{k}$ such that the associated $1$-cocycle $a$ defined as
  \[
  a_\sigma = \varphi^{-1} \sigma \varphi \sigma^{-1} \in \Aut(\alg{G}_1 \times_k \overline{k})
  \]
  for all $\sigma \in \Gal(\overline{k}/k)$ is equivalent to a cocycle with values in the group of inner automorphisms of $\alg{G}_1 \times_k \overline{k}$.
\end{definition}

\begin{proposition}\label{prop:inner-forms}
  Let $S$ be a finite set of places of $\bbQ$ containing the archimedean place. Let $\alg{G}_1$ and $\alg{G}_2$ be simply connected semisimple algebraic groups over $\bbQ$ such that \(\Lie(\alg{G}_1)(\bbQ_p) \cong \Lie(\alg{G}_2)(\bbQ_p)\) for all \(p \notin S\). Then $\alg{G}_2 \times_\bbQ \bbR$ is an inner form of $\alg{G}_1 \times_\bbQ \bbR$.
\end{proposition}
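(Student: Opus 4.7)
The plan is to reformulate the inner-form condition as an equality, up to conjugation, of ${}^*$-actions of $\Gal(\bar\bbQ/\bbQ)$ on the common absolute Dynkin diagram, and then to transfer the hypothesis from the non-archimedean places to the archimedean one via Chebotarev density.

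First I set up the classifying data. Since $\alg G_1$ and $\alg G_2$ are simply connected semisimple and have isomorphic $\bbQ_p$-Lie algebras for $p \notin S$, base change to $\bar\bbQ_p$ shows that their complexifications carry the same root system, so the underlying Dynkin diagrams agree; call it $\Delta$. Fix a $\bar\bbQ$-isomorphism $\varphi \colon \alg G_1 \times_\bbQ \bar\bbQ \to \alg G_2 \times_\bbQ \bar\bbQ$ and use it to identify $\Out(\alg G_i \times_\bbQ \bar\bbQ) = \Aut(\Delta)$. The ${}^*$-actions then furnish two continuous homomorphisms $\mu_1, \mu_2 \colon \Gal(\bar\bbQ/\bbQ) \to \Aut(\Delta)$. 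For simply connected semisimple groups over any field $k$ of characteristic zero, the Galois-cohomological description of inner twists gives the following: $\alg G_2 \times_\bbQ k$ is an inner form of $\alg G_1 \times_\bbQ k$ if and only if the restrictions $\mu_1|_{\Gal(\bar k/k)}$ and $\mu_2|_{\Gal(\bar k/k)}$ are $\Aut(\Delta)$-conjugate. Thus the proposition reduces to verifying this conjugacy at $k = \bbR$.

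Second, I exploit the local hypothesis. In characteristic zero, simply connected semisimple algebraic groups are determined up to isomorphism by their Lie algebras, so $\alg G_1 \times \bbQ_p \cong \alg G_2 \times \bbQ_p$ as $\bbQ_p$-groups for every $p \notin S$. Hence the local forms there are trivially inner twists of each other, and consequently the restrictions $\mu_{1,p}$ and $\mu_{2,p}$ to a decomposition group $D_p \subset \Gal(\bar\bbQ/\bbQ)$ are $\Aut(\Delta)$-conjugate. Choose a finite Galois extension $L/\bbQ$ through which both $\mu_i$ factor; then for every prime $p \notin S$ unramified in $L$ the Frobenius values satisfy $\mu_1(\mathrm{Frob}_p) \sim \mu_2(\mathrm{Frob}_p)$ in $\Aut(\Delta)$.

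Third, I combine Chebotarev density with a small amount of group theory to globalize. Chebotarev's theorem, applied to $\Gal(L/\bbQ)$, distributes Frobenius classes over all conjugacy classes, so the element-wise conjugacy at Frobenius extends to element-wise conjugacy on all of $\Gal(L/\bbQ)$: one has $\mu_1(\sigma) \sim \mu_2(\sigma)$ in $\Aut(\Delta)$ for every $\sigma$. Because $\Aut(\Delta)$ for an irreducible semisimple Dynkin diagram is one of $\{1\}$, $\bbZ/2\bbZ$, or $S_3$, a short case analysis of the possible subgroups and their embeddings into these ambient groups shows that element-wise $\Aut(\Delta)$-conjugate homomorphisms are in fact globally $\Aut(\Delta)$-conjugate. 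Restricting the resulting relation $\mu_1 \sim \mu_2$ to a decomposition group $D_\infty = \Gal(\bbC/\bbR) \hookrightarrow \Gal(\bar\bbQ/\bbQ)$ then yields the desired real inner-form equivalence.

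The most delicate step is the final one, specifically in the non-abelian case $\Aut(\Delta) = S_3$ that arises when $\Delta = D_4$. There, ``element-wise conjugate'' does not automatically promote to ``conjugate as homomorphisms'' for arbitrary targets, and the argument must rely on the particular subgroup lattice of $S_3$ together with the fact that every automorphism of each subgroup of $S_3$ is realized by conjugation inside $S_3$. The other cases ($\Aut(\Delta)$ trivial or of order two) reduce to routine character-theoretic or quadratic-field considerations.
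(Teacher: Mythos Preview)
Your overall strategy---recasting the inner-form condition in terms of the $*$-actions $\mu_i\colon \Gal(\bar\bbQ/\bbQ)\to\Aut(\Delta)$ and then using Chebotarev to propagate the local information---is exactly the idea behind the paper's proof. The paper phrases it as a contradiction argument with the cohomology class of the twisting cocycle in $H^1(\Gal,\Aut(\mathrm{Dyn}(\Phi)))$, but the content is the same: the image of the cocycle in $\Aut(\Delta)$ is controlled at every Frobenius by the $\bbQ_p$-hypothesis, and Chebotarev lets one hit the conjugacy class of complex conjugation.

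There is, however, a genuine gap in your third step. You assert that element-wise conjugate homomorphisms into $\Aut(\Delta)$ are globally conjugate ``because $\Aut(\Delta)$ for an irreducible semisimple Dynkin diagram is one of $\{1\},\bbZ/2\bbZ,S_3$''. But the groups $\alg G_i$ are only assumed \emph{semisimple}, not absolutely simple, so $\Delta$ need not be irreducible. For a disconnected Dynkin diagram, $\Aut(\Delta)$ is a product of wreath products and can contain large symmetric groups (e.g.\ $\Aut(nA_1)\cong S_n$), where element-wise conjugacy of homomorphisms certainly does not force global conjugacy. Your case analysis therefore does not cover the situation at hand.

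Fortunately this step is entirely unnecessary, and removing it brings your argument in line with the paper's. You have already established $\mu_1(\sigma)\sim\mu_2(\sigma)$ in $\Aut(\Delta)$ for \emph{every} $\sigma\in\Gal(L/\bbQ)$. Apply this to $\sigma=\iota^*(\tau)$, the image of complex conjugation. Since $\Gal(\bbC/\bbR)=\langle\tau\rangle$ is cyclic, conjugacy of the images of the generator is the same as conjugacy of the restricted homomorphisms $\mu_i|_{D_\infty}$; no global statement about $\mu_1,\mu_2$ is needed. This is precisely what the paper does: it singles out complex conjugation, uses Chebotarev to find a prime $p\notin S$ whose Frobenius lands in that conjugacy class, and reads off a contradiction from the $\bbQ_p$-hypothesis. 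So drop the promotion to global conjugacy and conclude directly from the element-wise statement at $\tau$.
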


\begin{proof}
  The condition \(\Lie(\alg{G}_1)(\bbQ_p) \cong \Lie(\alg{G}_2)(\bbQ_p)\) for all primes $p \not\in S$ implies in particular that $\Lie(\alg{G}_1)(L) \cong \Lie(\alg{G}_2)(L)$ for some finite Galois extension $L/\bbQ$.  Since simply connected semisimple groups are determined up to isomorphism by their Lie algebras, we deduce
  \[
  \alg{G}_1 \times_\bbQ L \cong \alg{G}_2 \times_\bbQ L.
\]
Without loss of generality we may assume, after possibly passing to a larger field $L$, that
$ \alg{G}_1 \times_\bbQ L $ is split.
Note that for \emph{any} extension field $L'$ of $L$
there is a short exact sequence
\[ 1 \longrightarrow \Ad(\alg{G}_1\times_\bbQ L') \longrightarrow \Aut(\alg{G}_1 \times_\bbQ L')
  \stackrel{\pi_{L'}}{\longrightarrow} \Aut(\mathrm{Dyn}(\Phi)) \longrightarrow 1 \]
where $\mathrm{Dyn}(\Phi)$ denotes the Dynkin diagram of the root system $\Phi$ of $\alg{G}_1 \times_\bbQ L$; see
(25.16) in \cite{BookOfInvolutions}.
In addition, for every Galois extension $L_1/L_2$ where $L_1$ contains $L$,
the short exact sequence is Galois equivariant, where the
action of the Galois group $\Gal(L_1/L_2)$ on $\Aut(\mathrm{Dyn}(\Phi))$ is the one
induced from the action of $\Gal(L/\bbQ)$.

We choose an isomorphism
$\varphi\colon \alg{G}_1\times_\bbQ L \to \alg{G}_2\times_\bbQ L$ and consider the
corresponding $1$-cocycle defined by
$a_\sigma = \varphi^{-1} \sigma \varphi \sigma^{-1} \in \Aut(\alg{G}_1\times_\bbQ L)$ for all
$\sigma \in \Gal(L/\bbQ)$. The associated non-abelian cohomology class in
$H^1(\Gal(L/\bbQ),\Aut(\alg{G}_1\times_\bbQ L))$ will be denoted by $[a]$ and is independent
of the choice of \(\varphi\).  Pick an embedding $\iota\colon L \to \bbC$. We note that
$\iota$ induces a homomorphism $\iota^*\colon \Gal(\bbC/\bbR) \to \Gal(L/\bbQ)$ of groups.

  Suppose that $\alg{G}_2 \times_\bbQ \bbR$ is \emph{not} an inner form of $\alg{G}_1 \times_\bbQ \bbR$. In this case the image $\iota(L)$ is not contained in $\bbR$, since otherwise $\alg{G}_1 \times_\bbQ \bbR$ and $\alg{G}_2 \times_\bbQ \bbR$ are isomorphic. The long exact sequence
  \begin{align*}
    H^1\bigl(\Gal(\bbC/\bbR),\Ad(\alg{G}_1\times_\bbQ \bbC)\bigr) &\longrightarrow
    H^1\bigl(\Gal(\bbC/\bbR),\Aut(\alg{G}_1\times_\bbQ \bbC)\bigr) \\
    &\longrightarrow H^1\bigl(\Gal(\bbC/\bbR),\Aut(\mathrm{Dyn}(\Phi))\bigr)
  \end{align*}
  shows that the class ${\pi_\bbC}({\iota^*}^*[a]) \in H^1(\Gal(\bbC/\bbR), \Aut(\mathrm{Dyn}(\Phi)))$ is non-trivial. Let $\tau \in \Gal(\bbC/\bbR)$ denote complex conjugation.  By Chebotarev's density theorem, see \cite[Theorem~13.4]{Neukirch}, there is a prime number $p \not\in S$ and a prime ideal $\fp \subseteq \calO_L$ lying over $p$ such that $L_\fp / \bbQ_p$ is an unramified quadratic extension and the image of $\Gal(L_\fp / \bbQ_p) \to \Gal(L/\bbQ)$ is $\langle \iota^*\tau \rangle$.  Let $j \colon L \to L_\fp$ denote the inclusion map.  Naturality of the above long exact sequence shows that the cohomology class $\pi_{L_\fp}({j^*}^*([a])) \in H^1(\Gal(L_\fp/\bbQ_p), \Aut(\mathrm{Dyn}(\Phi)))$, and hence also ${j^*}^*([a]) \in H^1(\Gal(L_\fp/\bbQ_p), \Aut(\alg{G}_1 \times_\bbQ L_\fp))$ is non-trivial.  By~\cite[XXIV 7.3.1 (iii)]{SGA3-III}, the natural map
  \[
  d \colon \Aut_{L_\fp}(\alg{G}_1 \times_\bbQ L_\fp) \to \Aut_{L_\fp\text{-}\Lie}(\Lie(\alg{G}_1)(L_\fp))
  \]
  is an isomorphism.  It follows that the $\bbQ_p$-Lie algebras $\Lie(\alg{G}_1)(\bbQ_p)$ and $\Lie(\alg{G}_2(\bbQ_p))$ are not isomorphic which yields a contradiction.
\end{proof}


The next proposition will show that inner forms of real Lie groups have maximal compact subgroups of the same rank.  We will use in this context that inner forms can be realized as the fixed point sets of conjugate involutions on the complexification (viewed as real Lie group).  To prepare the proof, we start with two entirely Lie theoretic considerations.  A \emph{torus} is a connected compact abelian Lie group.

\begin{lemma}\label{lem:decomposition-tori}
  Let $T$ be a  torus and let $\tau\colon T \to T$ be an automorphism of order $2$. Every $g \in T$ can be written $g = g_0 h^2$ for certain $g_0, h \in T$ with $\tau(g_0) = g_0$ and $\tau(h)= h^{-1}$. 
\end{lemma}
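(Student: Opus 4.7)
The plan is to reduce the statement to a decomposition of the Lie algebra $\mathfrak{t} = \calL(T)$ under the differential $d\tau$, and then to use that the exponential map and the squaring map on a torus are both surjective.

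First I would exploit that $d\tau \colon \mathfrak{t} \to \mathfrak{t}$ is an $\bbR$-linear involution, so $\mathfrak{t}$ splits as $\mathfrak{t} = \mathfrak{t}^+ \oplus \mathfrak{t}^-$ into the $(+1)$- and $(-1)$-eigenspaces. Setting $T^\pm = \exp(\mathfrak{t}^\pm)$, we obtain closed connected subtori: $T^+$ is the identity component of $T^\tau$ and $T^-$ is the identity component of $\{t \in T : \tau(t)=t^{-1}\}$. Because $T$ is abelian and $\tau$ is a Lie group homomorphism, the formula $\tau \circ \exp = \exp \circ \, d\tau$ shows $\tau$ restricts to the identity on $T^+$ and to inversion on $T^-$.

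Next I would produce the decomposition $g = g_0 t$ with $g_0 \in T^+$, $t \in T^-$. Since $T$ is a connected compact abelian Lie group, $\exp \colon \mathfrak{t} \to T$ is surjective, so write $g = \exp(X)$ and split $X = X^+ + X^-$ accordingly. Because $T$ is abelian, the exponential is a homomorphism from $(\mathfrak{t},+)$, and hence
\[ g \;=\; \exp(X^+)\exp(X^-) \;=\; g_0 \, t \qquad \text{with } g_0 \in T^+,\; t \in T^-. \]
Then $\tau(g_0)=g_0$ by construction, so it only remains to rewrite $t$ as a square of an element on which $\tau$ acts by inversion.

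For the last step I would use that $T^-$ is itself a torus, hence divisible: the squaring map $T^- \to T^-$, $h \mapsto h^2$, is surjective (it is a surjective Lie group homomorphism, being surjective on Lie algebras and between connected compact groups). Pick $h \in T^-$ with $h^2 = t$; since $h \in T^-$, we have $\tau(h) = h^{-1}$ automatically. This yields $g = g_0 h^2$ with the desired properties. There is no real obstacle here; the only point that deserves a line of justification is the surjectivity of the multiplication map $T^+ \times T^- \to T$, which I would handle by the exponential argument above rather than by analyzing the (possibly nontrivial) finite intersection $T^+ \cap T^-$.
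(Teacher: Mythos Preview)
Your proof is correct and follows essentially the same route as the paper: decompose $\mathfrak{t}$ into the $\pm 1$-eigenspaces of $d\tau$, lift $g$ to $X = X^+ + X^-$ via surjectivity of $\exp$, and take a square root of $\exp(X^-)$. The paper is marginally more direct in that it simply sets $h = \exp(\tfrac{1}{2}X^-)$ instead of invoking divisibility of the subtorus $T^-$, which makes the detour through the closed subtori $T^\pm$ and the multiplication map $T^+ \times T^- \to T$ unnecessary.
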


\begin{proof}
  Let $\ft$ denote the Lie algebra of $T$. The automorphism of $\ft$ induced by $\tau$ will be denoted by $\tau$ as well.  We define $\ft^\tau = \{X \in \ft \mid \tau(X) = X \}$ and $\ft_{-} = \{X \in \ft \mid \tau(X) = -X\}$.  The Lie algebra decomposes as a direct sum $\ft = \ft^\tau \oplus \ft_{-}$ of subspaces.  Let $g = \exp(X) \in T$. We write $X = X_0 + Y$ with $X_0 \in \ft^{\tau}$ and $Y \in \ft_{-}$ and we define $g_0 = \exp(X_0)$ and $h = \exp(\frac{1}{2}Y)$.  Now $g = g_0 h^2$ and we observe that $\tau(g_0) = \exp(\tau(X_0)) = g_0$ and $\tau(h) = \exp(\tau(\frac{1}{2}Y)) = \exp(-\frac{1}{2}Y) = h^{-1}$. 
\end{proof}

\begin{lemma}\label{lem:ex-stable-max-torus}
  Let $K$ be a compact Lie group and let $\tau \in \Aut(K)$ be an automorphism of order two. There is a maximal torus $T \subseteq K$ which is $\tau$-stable, i.e. $\tau(T) = T$.
\end{lemma}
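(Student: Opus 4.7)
The plan is to reduce to the case where $K$ is connected and then construct the $\tau$-stable maximal torus as the centralizer in $K$ of a maximal torus of the fixed subgroup $(K^\tau)_0$. Since any torus in $K$ lies in the identity component $K_0$ and $\tau(K_0) = K_0$, I may assume $K$ is connected from the start. Let $S$ be a maximal torus of $(K^\tau)_0$ and set $T := Z_K(S)$. That $\tau(T) = T$ is immediate because $\tau$ fixes $S$ pointwise, and $T$ is connected by the standard fact that centralizers of tori in compact connected Lie groups are connected.

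The heart of the argument is to show that $T$ itself is a torus. The key intermediate claim is that $T^\tau_0 = S$: by construction $S \subseteq T^\tau_0$ and $S$ is central in $T^\tau_0$ (since $T$ centralizes $S$); any maximal torus of $T^\tau_0$ containing $S$ is a torus in $(K^\tau)_0$, so must equal $S$ by maximality there; hence $S$ is a central maximal torus of $T^\tau_0$, which forces the equality. Passing to Lie algebras, this yields $\calL(T)^\tau = \calL(S)$, and a short lifting argument (take a fixed class, observe $\tau X - X \in \calL(S)$, apply $\tau$ using $\tau|_{\calL(S)} = \mathrm{id}$ to get $2(\tau X - X) = 0$) shows that $\tau$ acts as $-\mathrm{id}$ on $\calL(T)/\calL(S)$. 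The Jacobi identity then gives $\bar\tau[X,Y] = [-X,-Y] = [X,Y]$ for all $X, Y$, so all brackets vanish in the quotient. Hence $T/S$ is abelian, and $T$---being compact connected with central subgroup $S$ and abelian quotient---is solvable and therefore a torus (any connected compact solvable Lie group is a torus, since its derived subgroup is at the same time semisimple and solvable, hence trivial).

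Maximality of $T$ in $K$ is then formal: any torus $T' \supseteq T$ centralizes $T \supseteq S$ and thus lies in $Z_K(S) = T$, so $T' = T$. The main obstacle I anticipate is the verification of $T^\tau_0 = S$, which hinges on the fact that a central maximal torus of a compact connected Lie group exhausts the group. If one prefers a purely group-theoretic argument, the previously established Lemma~\ref{lem:decomposition-tori} can be invoked to lift $\tau$-fixed elements of $T/S$ to $\tau$-fixed elements of $T$ modulo squares, giving an alternative route to the same conclusion without passing through the Lie algebra.
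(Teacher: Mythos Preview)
Your proof is correct and takes a genuinely different route from the paper's. Both arguments begin by reducing to connected $K$ and both start from a maximal abelian subalgebra $\fa$ of $\fk^\tau$ (your $\calL(S)$). The paper then builds the torus by hand: it picks a maximal abelian $\fb \subseteq \fc(\fa) \cap \fk_-$, sets $\ft = \fa \oplus \fb$, and checks element by element that no larger abelian subalgebra of $\fk$ contains it. You instead set $T = Z_K(S)$ and prove that this centralizer is already a torus: from $T^\tau_0 = S$ you deduce $\tau$ acts by $-1$ on $\calL(T)/\calL(S)$, hence $[\calL(T),\calL(T)] \subseteq \calL(S)$, so $T$ is compact connected solvable and therefore a torus; maximality is then automatic. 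Your argument is more conceptual and shows a posteriori that the paper's $\fb$ is in fact all of $\fc(\fa) \cap \fk_-$; the paper's argument is more elementary in that it avoids invoking the reductivity of compact Lie algebras.

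Two small remarks. What you call the ``Jacobi identity'' in step~7 is really just the fact that $\tau$ is a Lie algebra automorphism; the Jacobi identity plays no role there. And your closing suggestion to use Lemma~\ref{lem:decomposition-tori} as an alternative is circular as stated: that lemma applies only to tori, which is exactly what you are trying to establish for $T$ (and $T/S$).
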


\begin{proof}
  Since $\tau(K^0) = K^0$ and all tori are conntained in the connected component $K^0$, we may assume that $K$ is connected. Let $\fk$ denote the Lie algebra of $K$. The correspondence between maximal tori of $K$ and maximal abelian subalgebras of $\fk$ (cf.~\cite[4.30]{Knapp}) shows that it suffices to prove the corresponding result for Lie algebras.

  We decompose $\fk$ as
  \[
  \fk = \fk^\tau \oplus \fk_{-}
  \]
  where $\fk^\tau$ is the subalgebra of $\tau$-invariant elements and $\fk_{-}$ is the $(-1)$-eigenspace of $\tau$. Let $\fa \subseteq \fk^\tau$ be a maximal abelian subalgebra of $\fk$.  Let $\fc(\fa) = \{X \in \fk \mid [X,Y] = 0 \text{ for all } Y \in \fa \: \}$ be the centralizer of $\fa$. 

  Choose a subspace $\fb \subset \fk_{-} \cap \fc(\fa)$ which is maximal abelian, i.e. is maximal with the property $[\fb,\fb] = 0$.  We define $\ft = \fa + \fb$ and we will show that $\ft$ is a maximal abelian subalgebra of $\fk$.  Clearly, $[\ft,\ft] = [\fa,\fa] + [\fa,\fb] +[\fb,\fb] = 0$ shows that $\ft$ is abelian.  Now suppose that $\fh \supseteq \fa$ is a larger abelian subalgebra.  Since $\fa \subseteq \fh$, the algebra $\fh$ lies in $\fc(\fa)$.  In particular, $\fh \cap \fk^\tau = \fa$ since $\fa$ is maximal abelian in $\fk^\tau$.  Similarly, $\fh \cap \fk_{-} = \fb$ since $\fb$ was maximal abelian in $\fc(\fa) \cap \fk_{-}$ Let $X \in \fh$ and write $X = X_{+} + X_{-}$ with $X_{+} \in \fk^\tau$ and $X_{-} \in \fk_{-}$.  For all $Z \in \fa$ one has
  \[
  0 = [X,Z] = [X_{+},Z] + [X_{-},Z] \in \fk^\tau \oplus \fk_{-}
  \]
  and therefore $X_{+} \in \fc(\fa) \cap \fk^\tau = \fa \subseteq \fh$ and so
  $X_{-} \in \fk_{-} \cap \fh = \fb$.  It follows that $\fh = \fa + \fb = \ft$.  Finally, we note that $\ft$ is $\tau$-stable, since $\fa$ and $\fb$ are contained in eigenspaces of $\tau$.
\end{proof}

\begin{proposition}\label{prop:same-rank}
  Let $G$ be a connected real Lie group with finite center and let $\sigma, \tau$ be two automorphisms of order two. Suppose that $\sigma = \inn(g) \circ \tau$ for some $g \in G$. If the fixed point groups $G^\tau$ and $G^\sigma$ are connected, then their maximal compact subgroups have the same complex rank.
\end{proposition}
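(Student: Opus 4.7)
The plan is to reduce to a question about compact Lie groups and then apply the two preceding lemmas together with the Borel--de Siebenthal principle that centralizers in a compact connected Lie group preserve complex rank. For the reduction, first pick a Cartan involution $\theta$ of $G$ commuting with $\tau$, which exists by standard theory for reductive real Lie groups with finite center. Since all Cartan involutions of $G$ are conjugate by an inner automorphism, there is some $h \in G$ with $\inn(h) \theta \inn(h)^{-1}$ commuting with $\sigma$; replacing $\sigma$ by its inner conjugate $\inn(h^{-1}) \sigma \inn(h)$ preserves $G^\sigma$ up to isomorphism (and hence the isomorphism class of a maximal compact subgroup), so one may assume that $\theta$ commutes with both $\tau$ and $\sigma$. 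With $K = G^\theta$, the Cartan decomposition $G^{\bullet} = K^{\bullet} \cdot \exp(\fp^{\bullet})$ together with the connectedness hypotheses shows that $K^\tau$ and $K^\sigma$ are connected maximal compact subgroups of $G^\tau$ and $G^\sigma$. The inner automorphism $\sigma \tau^{-1} = \inn(g)$ preserves $K$, so $g \in N_G(K) = K$ (using $Z(G) \subseteq K$ for finite center). The problem reduces to showing $\rank_\bbC K^\tau = \rank_\bbC K^\sigma$ in the compact setting, with $\sigma = \inn(g) \circ \tau$ and $g \in K$.

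The relation $\sigma^2 = \id$ gives $g\tau(g) =: z \in Z(K)$, and applying $\tau$ yields $\tau(z) = z$. Since $\tau(g) = g^{-1} z$, the centralizer $Z_K(g)$ is $\tau$-stable; the element $g$, being semisimple, lies in a maximal torus of $K$, hence is central in $Z_K(g)^0$. Apply Lemma \ref{lem:ex-stable-max-torus} to the compact connected group $Z_K(g)^0$ and the induced involution $\tau$ to obtain a $\tau$-stable maximal torus $T$ of $Z_K(g)^0$; because any maximal torus of $K$ through $g$ is already contained in $Z_K(g)^0$, $T$ is in fact a maximal torus of $K$, and it contains $g$ by centrality. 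Lemma \ref{lem:decomposition-tori} now writes $g = g_0 h^2$ with $g_0, h \in T$, $\tau(g_0) = g_0$, and $\tau(h) = h^{-1}$. Using that $g_0$ and $h$ commute in $T$ and $\tau(h) = h^{-1}$, a short calculation yields $\inn(h^{-1}) \sigma \inn(h) = \inn(g_0) \circ \tau =: \sigma'$, an involution inner-conjugate to $\sigma$ and satisfying $\sigma'\tau = \tau\sigma' = \inn(g_0)$. Thus $\tau$ and $\sigma'$ commute, and $g_0 \in K^\tau \cap K^{\sigma'}$.

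To conclude, observe that $\sigma'|_{K^\tau} = \inn(g_0)|_{K^\tau}$ is an inner involution of the compact connected group $K^\tau$, and similarly $\tau|_{K^{\sigma'}} = \inn(g_0^{-1})|_{K^{\sigma'}}$ is an inner involution of $K^{\sigma'}$. The Borel--de Siebenthal principle, that for any $x$ in a compact connected Lie group $L$ the centralizer $Z_L(x)$ contains a maximal torus of $L$ through $x$ and thus has the same complex rank as $L$, applied to $(K^\tau, g_0)$ gives $\rank_\bbC (K^\tau)^{\sigma'} = \rank_\bbC Z_{K^\tau}(g_0) = \rank_\bbC K^\tau$, and symmetrically $\rank_\bbC (K^{\sigma'})^\tau = \rank_\bbC K^{\sigma'}$. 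Since both fixed subgroups coincide with the intersection $K^\tau \cap K^{\sigma'}$, the two ranks are equal, and finally $\rank_\bbC K^\sigma = \rank_\bbC K^{\sigma'}$ by inner conjugation. The main obstacle will be the initial reduction step: justifying the existence of a Cartan involution commuting with $\tau$, the subsequent inner-conjugate alignment with $\sigma$, and the identification $N_G(K) = K$ under the sole hypothesis of finite center; each of these is standard but must be carefully invoked in the stated generality.
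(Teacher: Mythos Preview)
Your argument tracks the paper's proof closely once you have reduced to the compact group \(K\): both proofs find a \(\tau\)-stable maximal torus in \(C_K(g)\) via Lemma~\ref{lem:ex-stable-max-torus}, decompose \(g = g_0 h^2\) via Lemma~\ref{lem:decomposition-tori}, conjugate by \(h\) to arrange \(\tau(g_0)=g_0\), and then conclude by observing that \(K^\tau \cap K^{\sigma'}\) equals both \(C_{K^\tau}(g_0)\) and \(C_{K^{\sigma'}}(g_0)\), which have full rank in the respective connected compact groups. Your phrasing via Borel--de Siebenthal is exactly the paper's final step.

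The difference lies in the reduction to \(g \in K\). The paper chooses a \(\tau\)-stable maximal compact subgroup \(K\) directly and then invokes the bijection \(H^1(\tau,K/Z) \cong H^1(\tau,G/Z)\) to replace \(g\) (up to a coboundary and an element of \(Z\)) by an element of \(K\). Your route instead aligns a single Cartan involution \(\theta\) with both \(\tau\) and (an inner conjugate of) \(\sigma\), and then reads off \(g \in N_G(K)=K\) from the fact that \(\inn(g)=\sigma\tau\) commutes with \(\theta\). This is elegant and avoids the cohomological input, but it imports a hypothesis the statement does not make: the existence and conjugacy of Cartan involutions, as well as the Cartan decomposition you use for connectedness of \(K^\tau\), presuppose that \(G\) is reductive. ``Connected with finite center'' does not force this (e.g.\ \(\SL_2(\bbR)\ltimes\bbR^2\) has trivial center but is not reductive). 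You flag this yourself; it is a genuine restriction rather than a mere technicality. The paper's reduction, by contrast, rests on results (existence of a \(\tau\)-stable maximal compact subgroup and the \(H^1\)-bijection) that are stated for connected Lie groups with finite center without a reductivity assumption. For the intended application in Theorem~\ref{thm:f-rank-dim-mod-4}, where \(G=\alg{G}_1(\bbC)\) is semisimple, your argument is perfectly adequate; but as a proof of the proposition in the generality stated, the reductivity gap remains.
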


\begin{proof}
  Let $Z \subseteq G$ be the center.  In the situation above, $gZ$ defines a $1$-cocycle for $\tau$, i.e. $g \tau(g) \in Z$.  Let $K \subseteq G$ be a $\tau$-stable maximal compact subgroup; the existence follows from \cite[A.4.2]{KionkePhD}.  We remark that $K^{\tau} = G^{\tau} \cap K$ is a maximal compact subgroup of $G^{\tau}$; see \cite[A.4.4]{KionkePhD}.  Since $Z$ is assumed to be finite $Z \subseteq K$ and $K/Z$ is a maximal compact subgroup of $G/Z$.  Moreover, the inclusion yields a bijection of the first non-abelian cohomology sets
  \[
  H^1(\tau,K/Z) \cong H^1(\tau,G/Z);
  \]
  see \cite[Thm.~3.1]{AnWang2008} or \cite[II.4.4]{KionkePhD}. This means, there are $k\in K$ and $h\in G$ with $k\tau(k) \in Z$ and $hk\tau(h)^{-1} \in gZ$. We define $\sigma' := \inn(k) \circ \tau$, then conjugation by $h$ defines an isomorphism between $G^{\sigma'}$ and $G^{\sigma}$. Indeed, this follows from the observation that 
  \[
  \sigma \inn(h) = \inn(g) \tau \inn(h) = \inn(g \tau(h)) \tau = \inn(hk) \tau = \inn(h) \sigma'.
  \]
  We may hence replace $\sigma$ by $\sigma'$ and assume that $g\in K$.  In this case $\tau$ and $\sigma$ stabilize $K$ and therefore also $G^\sigma \cap K = K^\sigma$ is a maximal compact subgroup of $G^{\sigma}$. Recall that $K^\tau$ and $K^\sigma$ are connected if $G^\tau$ and $G^\sigma$ are connected.
  
  From now on it suffices to consider $K$ instead of $G$.  We need to show that $K^{\tau}$ and $K^{\sigma}$ have the same rank.  The centralizer $C_K(g)$ is a closed subgroup of $K$. Since $\tau(g) \in g^{-1}Z$, the centralizer $C_K(g)$ is stable under $\tau$ (and hence also $\sigma$).  Since $g$ is contained in a maximal torus of $K$ (e.g. \cite[4.36]{Knapp}), the maximal tori in $C_K(g)$ are maximal in $K$. By Lemma \ref{lem:ex-stable-max-torus} we find a maximal torus $T \subseteq C_K(g)$ which is $\tau$-stable. In fact, $g \in T$. This follows from Theorem 4.50 in \cite{Knapp} using that $g$ centralizes $T$, the group $K$ is connected and $T$ is maximal in $K$.
  
  We use Lemma \ref{lem:decomposition-tori} to write $g = g_0 h^2$ for elements $g_0,h \in T$ which satisfy $\tau(g_0) = g_0$ and $\tau(h) = h^{-1}$. Observe that $g\tau(g) = g_0^2 \in Z$. The automorphism $\sigma' = \inn(g_0) \circ \tau$ satisfies $\sigma \circ \inn(h) = \inn(h) \circ \sigma'$ and, as above, conjugation by $h$ provides an isomorphism of the fixed point groups $K^{\sigma'}$ and $K^\sigma$. This means, we may assume that $g = g_0$ and $\tau(g) = g$. In other words, we assume that $g \in K^\tau \cap K^\sigma$. Since $K^\tau$ and $K^\sigma$ are connected, the centralizer of any element contains a maximal torus. Finally, we observe that
  \[
  C_K(g) \cap K^\sigma =  K^\tau \cap K^\sigma = K^\tau \cap C_K(g).
  \]
  This means, that a maximal torus of $K^\tau\cap K^\sigma$ is maximal in both fixed point groups.
\end{proof}


One more little observation, and afterwards the proof of Theorem~\ref{thm:f-rank-dim-mod-4} will merely be a summary of our discussion.

\begin{lemma}\label{lemma:epi-finite-index}
  Let $\varphi\colon \Gamma \to \Delta$ be a surjective homomorphism of residually finite groups. If $\varphi$ has finite kernel, then $\Gamma$ and $\Delta$ have isomorphic finite index subgroups.
\end{lemma}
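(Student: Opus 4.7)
The plan is to use residual finiteness of $\Gamma$ to produce a finite index subgroup of $\Gamma$ which intersects the kernel trivially; the restriction of $\varphi$ to this subgroup will then provide the desired isomorphism onto a finite index subgroup of $\Delta$.

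Set $N = \ker\varphi$, which is finite by hypothesis. For every non-trivial $n \in N$, residual finiteness of $\Gamma$ provides a finite index subgroup $H_n \leq \Gamma$ with $n \notin H_n$. Since $N \setminus \{1\}$ is finite, the intersection
\[ H = \bigcap_{n \in N \setminus \{1\}} H_n \]
is a finite index subgroup of $\Gamma$ with the property that $H \cap N = \{1\}$.

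The restriction $\varphi|_H \colon H \to \Delta$ has kernel $H \cap N = \{1\}$, so it is injective and therefore induces an isomorphism $H \cong \varphi(H)$. It remains to check that $\varphi(H)$ has finite index in $\Delta$: choosing representatives $\gamma_1,\dots,\gamma_r$ for the cosets of $H$ in $\Gamma$ and using surjectivity of $\varphi$, every element of $\Delta$ can be written as $\varphi(\gamma_i)\varphi(h)$ for some $i$ and some $h \in H$, so $[\Delta : \varphi(H)] \leq r < \infty$. This gives the required pair of isomorphic finite index subgroups.

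There is no real obstacle here; the only point requiring care is that we use residual finiteness of $\Gamma$ (not of $\Delta$) to separate the finitely many non-identity elements of the kernel from a single finite index subgroup. Note also that the conclusion does not require $\Delta$ to be residually finite as an input, although this is stated as a hypothesis for symmetry.
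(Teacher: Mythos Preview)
Your proof is correct and follows essentially the same approach as the paper: use residual finiteness of $\Gamma$ to find a finite index subgroup meeting $\ker\varphi$ trivially, then observe that $\varphi$ restricts to an isomorphism onto a finite index subgroup of $\Delta$. You have simply spelled out in more detail how residual finiteness yields such a subgroup (intersecting finitely many separating subgroups) and why the image has finite index, whereas the paper states these facts without further justification.
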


\begin{proof}
  Since $\Gamma$ is residually finite, there is a finite index subgroup $\Gamma_0 \leq_{f.i.} \Gamma$ s.t. $\Gamma_0 \cap \ker(\varphi) = 1$. Then $\varphi|_{\Gamma_0}$ is an isomorphism onto a finite index subgroup of $\Delta$.
\end{proof}

\begin{proof}[Proof of Theorem~\ref{thm:f-rank-dim-mod-4}]
  We quotient out all \(\bbQ\)-simple factors of \(\alg{G}_i\) which have compact \(\bbR\)-points.  By Lemma~\ref{lemma:epi-finite-index}, the resulting groups still have profinitely isomorphic arithmetic subgroups and the values \(\dim X_i\) and \(\delta(G_i)\) are unchanged. As compact factors have finite \(\bbZ\)-points, it follows moreover that the new groups still have finite congruence kernel. Consequently, they are simply-connected~\cite[p.\,556]{PlatonovRapinchuk}.  By the Kneser--Platonov theorem~\cite[Theorem~7.12, p.\,427]{PlatonovRapinchuk}, we may therefore assume that \(\alg{G}_i\) satisfies strong approximation. From Proposition~\ref{prop:Lie-algebras-isomorphic}, we conclude that $\Lie(\alg{G}_1)(\bbQ_p) \cong \Lie(\alg{G}_2)(\bbQ_p)$ for every prime $p$.  Hence Proposition~\ref{prop:same-sign} shows that $\dim(X_1) \equiv \dim(X_2) \bmod 4$.

  Proposition~\ref{prop:inner-forms} implies that the group $\alg{G}_2 \times_\bbQ \bbR$ is an inner form of $\alg{G}_1\times_\bbQ \bbR$.  Hence an isomorphism $\varphi\colon \alg{G}_1\times_\bbQ\bbC \to \alg{G}_2\times_\bbQ \bbC$ can be chosen so that $\varphi^{-1} \tau_2 \varphi \tau^{-1}_1 = \inn(g)$ for some $g \in \alg{G}_1(\bbC)$ where $\tau_i$ denotes the involution on $\alg{G}_i(\bbC)$ induced by complex conjugation, so that $\alg{G}_i(\bbC)^{\tau_i}=\alg{G}_i(\bbR)$. Setting $\sigma = \varphi^{-1} \tau_2 \varphi$, we have $\sigma = \inn(g) \circ \tau_1$ and $\alg{G}_1(\bbC)^\sigma \cong \alg{G}_2(\bbR)$. Since the groups $\alg{G}_i$ are simply-connected, the real Lie groups $\alg{G}_i(\bbR)$ are connected; see \cite[Proposition~7.6]{PlatonovRapinchuk}. By  Proposition \ref{prop:same-rank} we have that $\rank_\bbC(\calL(K_2)\otimes \bbC) = \rank_\bbC(\calL(K_1)\otimes \bbC)$, and as $\delta(\alg{G}_i(\bbR)) = \rank_\bbC(\Lie(\alg{G}_i)(\bbC)) - \rank_\bbC(\calL(K_i)\otimes \bbC)$ we can conclude that $\delta(G_1) = \delta(G_2)$.  
\end{proof}


\section{Conclusion of main results} \label{section:conclusion}

We prove the main result, Theorem~\ref{thm:main-theorem}, and the slightly strengthened version, Theorem~\ref{thm:strengthening}. Most of what we need for the semisimple case is contained in Theorem~\ref{thm:f-rank-dim-mod-4}. In this section we put everything together and describe the reduction to the semisimple case. 

\begin{lemma}\label{lem: normal solvable subgroups}
Let $\alg G$ be a semisimple linear algebraic $\mathbb Q$-group with finite congruence kernel and $\Gamma \le \alg G(\mathbb Q)$ an arithmetic subgroup. Then $\widehat \Gamma$ has no (topologically) finitely generated infinite closed normal solvable subgroup. 
\end{lemma}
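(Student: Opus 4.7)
The plan is to exploit the finite congruence kernel to reduce the statement to the structure of an open compact subgroup of $\alg G(\bbA_f)$, and then use semisimplicity of $\Lie(\alg G)$ together with Zariski density of open subgroups to confine any normal solvable subgroup to the finite center of $\alg G$. First I make standard reductions: replacing $\alg G$ by its simply connected cover and quotienting out any $\bbQ$-simple factor with compact $\bbR$-points changes $\widehat \Gamma$ only by finite kernels and cokernels, which is harmless for the conclusion of the lemma in view of Lemma~\ref{lemma:epi-finite-index}. Afterwards $\alg G$ is simply connected semisimple with no compact $\bbR$-factors, and the Kneser--Platonov theorem \cite[Theorem~7.12]{PlatonovRapinchuk} guarantees strong approximation. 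Combined with the finite congruence kernel, this realizes $\widehat \Gamma$ as a finite-kernel extension of the closure of $\Gamma$ in $\alg G(\bbA_f)$; after passing to a deep enough principal congruence subgroup, I may assume $\widehat \Gamma / C \cong \prod_p K_p$ for some finite normal $C \trianglelefteq \widehat \Gamma$, with each $K_p$ an open compact subgroup of $\alg G(\bbQ_p)$.

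Now let $N \trianglelefteq \widehat \Gamma$ be topologically finitely generated, closed, solvable, and infinite, and denote by $\bar N$ its image in $K := \prod_p K_p$. Since $C$ is finite, $\bar N$ inherits all these properties, so it suffices to rule out such a subgroup in $K$. Let $N_p \le K_p$ denote the projection of $\bar N$ to the $p$-th factor. Then $N_p$ is a closed normal solvable subgroup of the compact open $p$-adic Lie group $K_p \le \alg G(\bbQ_p)$, whose $\bbQ_p$-Lie algebra equals $\Lie(\alg G)(\bbQ_p)$. Normality forces $\calL(N_p)$ to be an ideal in this semisimple Lie algebra, and solvability of $N_p$ transfers to $\calL(N_p)$; since every ideal of a semisimple Lie algebra is semisimple, this gives $\calL(N_p) = 0$, so $N_p$ is a finite group. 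The conjugation action $K_p \to \Aut(N_p)$ thus factors through a finite quotient, so its kernel $C_{K_p}(N_p)$ is open in $K_p$ and in particular Zariski dense in the connected group $\alg G \times_\bbQ \bbQ_p$; the centralizer in $\alg G$ of each $n \in N_p$ must then be the whole group, which yields $N_p \subseteq Z(\alg G)(\bbQ_p)$ for every $p$.

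Finally, because $\alg G$ is simply connected semisimple, its center $Z(\alg G)$ is a finite group scheme of some order $m$, and $\bar N$ embeds in the abelian pro-torsion group $\prod_p Z(\alg G)(\bbQ_p)$ of exponent dividing $m$. Any topologically finitely generated closed subgroup of such a group decomposes as a product $\prod_\ell A_\ell$ over primes $\ell$, with each $A_\ell$ a finitely generated abelian pro-$\ell$ group of exponent dividing the $\ell$-part of $m$; this forces $A_\ell$ to be trivial for $\ell \nmid m$ and finite for $\ell \mid m$, so $\bar N$, and hence $N$, is finite, contradicting our assumption. The main obstacle is the middle step, where the Lie-algebra ideal argument and the Zariski density of open subgroups of $\alg G(\bbQ_p)$ must be combined to push each $N_p$ into the algebraic center; the surrounding reductions and the final bounded-exponent computation for abelian profinite groups are comparatively routine.
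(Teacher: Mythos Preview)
Your argument is correct and follows essentially the same route as the paper's proof: reduce to the simply connected case without compact $\bbR$-factors, use strong approximation and the finite congruence kernel to realize $\widehat\Gamma$ (up to a finite group) as an open compact subgroup of $\prod_p \alg G(\bbQ_p)$, kill each projection $N_p$ via the solvable-ideal-in-semisimple-Lie-algebra argument, push the resulting finite $N_p$ into the center by Zariski density, and finish with the bounded-exponent observation for finitely generated abelian profinite groups. The only cosmetic difference is in the Zariski-density step: the paper uses that $\Gamma$ itself normalizes $F_p$ and is Zariski dense in $\alg G$, whereas you use that the open centralizer $C_{K_p}(N_p)$ is Zariski dense in $\alg G\times_\bbQ\bbQ_p$; also, the paper invokes Serre's observation that a semisimple $\bbQ$-group with finite congruence kernel and no compact $\bbR$-factors is already simply connected, so no passage to the universal cover is needed.
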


\begin{proof}
The desired property of $\widehat \Gamma$ stays unchanged by passing to finite index subgroups. The group $\alg G$ is an almost direct product 
of simple $\bbQ$-groups. A finite index subgroup of $\Gamma$ is an arithmetic subgroup of the product of simple $\bbQ$-factors whose $\bbR$-points are non-compact. Moreover, the latter product has a finite congruence kernel~\cite[p.~400]{Rapinchuk}. 
So we may and will assume that $\alg G$ contains no $\bbQ$-simple (almost) factor whose $\bbR$-points are compact. By an observation of Serre, $\alg G$ is simply connected~\cite[1.2~c)]{Serre:groupes-de-congruence}. 
 Hence $\alg G$ satisfies strong approximation~\cite[Theorem~7.12 on~p.\,427]{PlatonovRapinchuk}. Since the congruence kernel of $\alg G$ is finite, we can assume, by passing once more to a finite index subgroup, that $\widehat \Gamma$ is embedded into $\prod_p\alg G(\bbQ_p)$. By strong approximation $\widehat \Gamma$ is a compact open subgroup of $\prod_{p} U_p$, where each $U_p<\alg G(\mathbb Q_p)$ is a compact open subgroup. Let $\pr_p$ be the projection from the product to $U_p$. 
    Let $N$ be a finitely generated closed normal solvable subgroup of $\widehat \Gamma$. We have to show that $N$ is finite. 
    
    If $\pr_p(N)$ was infinite for some prime, its Lie subalgebra would be a non-trivial solvable ideal in $\calL(\alg G(\mathbb Q_p))$ contradicting semisimplicity of $\alg G$. Thus $F_p:=\pr_p(N)$ is finite for every prime $p$. 

    We know that $\Gamma$ is a subset of  $U_p \subseteq \alg G(\bbQ_p)$ and it is
    is Zariski dense in $\alg G$ by~\cite[Proposition~(3.2.11) on p.~65]{Margulis:discrete-subgroups}. 
    Since the finite, in particular algebraic, subgroup $F_p$ of $\alg G$ is normalised by the Zariski dense set $\Gamma$, we conclude that $F_p$ is a normal subgroup of $\alg G$~\cite[Proposition 1.38 on p.~31]{milne:algebraic-groups}. Thus it is contained in the center of $\alg G$ by semisimplicity. 
    So there is $e\in\bbN$ such that $F_p$ is abelian with exponent $e$ for every prime $p$. In particular, $N$ is abelian with exponent $e$. Since it is finitely generated as a profinite group, it is finite~\cite[Theorem~4.3.5 on p.~131]{ribes+zalesskii}. 
\end{proof}    
    
\begin{lemma}\label{lem: from reductive to semisimple}
Let $\alg G$ be a linear algebraic $\mathbb Q$-group and $\Gamma \le \alg G(\mathbb Q)$ an arithmetic subgroup. If $\Gamma$ has no finitely generated infinite normal solvable subgroup, then $\alg G$ is reductive and 
$\Gamma\cap\calD(\alg G)(\mathbb Q)$ has finite index in $\Gamma$ where $\calD(\alg{G})$ denotes the derived subgroup. 
\end{lemma}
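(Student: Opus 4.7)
The plan is to separate the argument into two steps: first show that the unipotent radical of $\alg{G}$ must vanish, so that $\alg{G}$ is reductive, and then use the almost-direct product decomposition $\alg{G} = \alg{Z}^0 \cdot \calD(\alg{G})$ of a reductive group to derive the finite index statement.

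For the first step, I would argue by contradiction: assume the unipotent radical $\alg{R}_u \trianglelefteq \alg{G}$ is non-trivial (it is automatically defined over $\bbQ$ in characteristic zero). Then $N := \Gamma \cap \alg{R}_u(\bbQ)$ is an arithmetic subgroup of $\alg{R}_u$, because after realizing $\Gamma$ as commensurable with $\alg{G}(\bbZ) \subseteq \mathrm{GL}_n(\bbZ)$ it coincides (up to finite index) with $\alg{R}_u(\bbZ)$. As a lattice in the non-trivial simply connected nilpotent real Lie group $\alg{R}_u(\bbR)$, the group $N$ is infinite and, by Malcev's theorem, finitely generated; it is nilpotent (hence solvable) and normal in $\Gamma$ because $\alg{R}_u \trianglelefteq \alg{G}$. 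This contradicts the hypothesis on $\Gamma$, so $\alg{G}$ must be reductive.

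For the second step, let $\alg{Z}^0$ denote the connected component of the center of the reductive group $\alg{G}$; it is a $\bbQ$-torus and $\alg{G} = \alg{Z}^0 \cdot \calD(\alg{G})$ with finite intersection. The subgroup $Z := \Gamma \cap \alg{Z}^0(\bbQ)$ is central, hence normal and abelian in $\Gamma$, and contained in the finitely generated abelian group $\alg{Z}^0(\bbZ)$, so it is itself finitely generated. By hypothesis it must therefore be finite. Now consider the $\bbQ$-rational quotient $q \colon \alg{G} \to \alg{T} := \alg{G}/\calD(\alg{G})$ to a $\bbQ$-torus. The composition $\alg{Z}^0 \hookrightarrow \alg{G} \xrightarrow{q} \alg{T}$ is an isogeny, so any arithmetic subgroup of $\alg{T}$ is commensurable with $\alg{Z}^0(\bbZ)$; since $Z$ and $\alg{Z}^0(\bbZ)$ are themselves commensurable (by commensurability of $\Gamma$ with $\alg{G}(\bbZ)$), finiteness of $Z$ forces every arithmetic subgroup of $\alg{T}$, and in particular the image $q(\Gamma)$, to be finite. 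Since $q(\Gamma) \cong \Gamma/(\Gamma \cap \calD(\alg{G})(\bbQ))$, we conclude that $\Gamma \cap \calD(\alg{G})(\bbQ)$ has finite index in $\Gamma$.

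The main subtlety, rather than a genuine obstacle, is the commensurability bookkeeping among the various arithmetic subgroups involved ($\Gamma$, $\alg{G}(\bbZ)$, $\alg{Z}^0(\bbZ)$, $Z$, $q(\Gamma)$, and arithmetic subgroups of $\alg{T}$). This is cleanly organized via Borel's theorems on arithmetic subgroups of reductive groups, which guarantee that $\bbQ$-isogenies preserve arithmetic subgroups up to commensurability and that $\bbQ$-rational surjections send arithmetic subgroups to arithmetic subgroups up to finite index.
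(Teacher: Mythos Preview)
Your argument is correct and follows essentially the same route as the paper: both identify $\Gamma \cap R_u(\bbQ)$ and $\Gamma \cap Z^0(\bbQ)$ as the forbidden finitely generated infinite normal solvable subgroups, forcing $R_u$ to be trivial and the contribution of the central torus to be finite. The paper packages the final step via Borel's product decomposition (commensurability of $\Gamma$ with $\alg{S}(\bbZ)\cdot\calD(\alg{G})(\bbZ)$) after first reducing to connected $\alg{G}$, whereas you go through the quotient $q\colon \alg{G}\to \alg{T}$ and the isogeny $Z^0\to \alg{T}$; just note that this isogeny claim genuinely needs $\alg{G}$ connected (for disconnected reductive $\alg{G}$ the kernel $Z^0\cap\calD(\alg{G})$ can be positive-dimensional), so you should make that reduction explicit as the paper does.
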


\begin{proof}
 Upon passing to finite index subgroups of $\alg G$ and $\Gamma$ we may assume \(\alg{G}\) is connected so that due to \cite[Th\'eor\`eme~7.15]{Borel:introduction-aux-groupes-arithmetiques} and \cite[Section~0.24, p.\,21]{Margulis:discrete-subgroups}, we have a decomposition
\[ \alg{G} = R_u(\alg{G}) \rtimes \alg{S}\, \calD(\alg G) \]
as semidirect product of the unipotent radical \(R_u(\alg{G})\) and a reductive \(\bbQ\)-subgroup \(\alg{S}\, \calD(\alg G)\).  The latter group is an almost direct product of the central \(\bbQ\)-torus \(\alg{S}\) and the semisimple derived subgroup \(\calD(\alg G)\). 

By \cite[Corollaire~7.13.(4)]{Borel:introduction-aux-groupes-arithmetiques}, \(\Gamma\) is commensurable with the group \(\Lambda = R_u(\alg{G})(\bbZ)\, (\alg S\calD(\alg G))(\bbZ)\).  By~\cite[Corollary (3.2.9) on p.~64]{Margulis:discrete-subgroups} $\alg S(\bbZ)\calD(\alg G)(\bbZ)$ and $ (\alg S\calD(\alg G))(\bbZ)$ are commensurable. 
As arithmetic subgroups of unipotent groups are Zariski dense \cite[Lemma 3.3.3.(iii), p.\,65]{Margulis:discrete-subgroups}, it follows that \(R_u(\alg{G})(\bbZ)\) is an infinite normal nilpotent subgroup of \(\Lambda\) whenever \(R_u(\alg{G})\) is not trivial. Moreover, arithmetic groups are finitely generated. 
So our assumption on $\Gamma$ implies that \(R_u(\alg{G})\) is trivial and $\alg G$ is reductive. Since the group $\alg S(\bbZ)$ is abelian and normal in $\Lambda$ which is commensurable to $\Gamma$ it has to be finite. Thus $\Gamma\cap\calD(\alg G)(\bbQ)$ is of finite index in~$\Gamma$. 
\end{proof}

One says that a profinite group is \emph{adelic} if it isomorphic to a closed subgroup 
of some $\SL_m(\widehat \bbZ)$. 
See~\cite[p.~220]{Lubotzky-Segal:subgroup-growth} for a discussion of this notion. One easily sees that a profinite group $G$ that contains an adelic subgroup $H<SL_m(\widehat\bbZ)$ of finite index is itself adelic---via an embedding into $\SL_{m[G:H]}(\widehat \bbZ)$. 

\begin{theorem}[Platonov-Rapinchuk, Lubotzky]\label{thm: lubotzky}
Let $\alg G$ be a simply connected semisimple linear algebraic $\bbQ$-group such that each $\bbQ$-simple factor of $\alg G$ satisfies the Platonov-Margulis conjecture. Let $\Lambda<\alg G(\bbQ)$ be an 
$S$-arithmetic subgroup. If $\widehat\Lambda$ is adelic, then $\alg G$ has a finite congruence kernel. 
\end{theorem}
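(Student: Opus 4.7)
The plan is to split $\alg G$ into its $\bbQ$-simple factors, to reduce each to the absolutely almost simple case over a number field by Weil restriction, and then to invoke a theorem of Lubotzky that CSP for such groups is equivalent to linearity of the profinite completion of an $S$-arithmetic subgroup.

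First, since $\alg G$ is simply connected semisimple, it decomposes as a direct product $\alg G = \prod_{i=1}^{r} \alg G_i$ of its $\bbQ$-simple factors, each of the form $\alg G_i = \Res_{k_i/\bbQ} \alg H_i$ with $\alg H_i$ absolutely almost simple and simply connected over a number field $k_i$. Passing to a finite index subgroup if necessary, $\Lambda$ splits as $\prod_i \Lambda_i$ with $\Lambda_i$ an $S_i$-arithmetic subgroup of $\alg H_i(k_i)$, where $S_i$ is the set of places of $k_i$ above $S$. The profinite completion commutes with finite direct products; moreover, open subgroups and direct factors of closed subgroups of some $\SL_m(\widehat \bbZ)$ are again adelic (as remarked just before the statement of the theorem for finite index inclusions, and by realising a direct factor $H$ of $G$ as the closed subgroup $H \times \{1\} \subseteq G$). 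Hence each $\widehat{\Lambda_i}$ is adelic. Since the congruence kernel similarly factors as $C(S, \alg G) = \prod_i C(S_i, \alg H_i)$, it is enough to handle one factor at a time, so we may assume that $\alg H$ is absolutely almost simple and simply connected over $k$ and satisfies the Platonov--Margulis conjecture.

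By definition of the congruence kernel, there is a canonical continuous surjection $\widehat \Lambda \to \overline \Lambda$ onto the closure $\overline \Lambda$ of $\Lambda$ in the $S$-adele group of $\alg H$, with kernel $C := C(S, \alg H)$. Since $\widehat \Lambda$ is a closed subgroup of some $\SL_m(\widehat \bbZ)$, so is its closed normal subgroup $C$; in particular $C$ is a \emph{linear} profinite group. The proof is then completed by invoking the theorem of Lubotzky (building on Platonov--Rapinchuk) that, under the Platonov--Margulis conjecture, any infinite congruence kernel $C(S, \alg H)$ fails to be linear. Combining the two statements forces $C$ to be finite, as required.

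The main obstacle is this non-linearity theorem. Its proof exploits the Platonov--Margulis conjecture to describe the normal subgroup structure of $\alg H(k)$ --- every non-central normal subgroup is wedged between principal $T$-congruence subgroups for some finite set of places $T$ --- and uses this structural information to produce either free pro-$p$ subgroups of unbounded rank inside $C$ or continuous quotients which cannot be embedded into any $\GL_m(\widehat \bbZ)$. I would treat this as a deep input from Lubotzky and the Platonov--Rapinchuk school rather than attempt to reproduce it here.
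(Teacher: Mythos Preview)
Your proposal is correct and follows essentially the same route as the paper: reduce to the $\bbQ$-simple factors, pass via Weil restriction to the absolutely almost simple case over a number field, and then invoke the Lubotzky/Platonov--Rapinchuk black box. The only cosmetic difference is in how that black box is packaged: the paper deduces bounded generation of $\widehat\Lambda$ from adelicness (Theorem~12.2 of Lubotzky--Segal) and then applies Theorem~12.10 of \emph{loc.\ cit.}\ to conclude finiteness of the congruence kernel, whereas you phrase the same input as ``an infinite congruence kernel is never a linear profinite group''; both formulations encode the same dichotomy theorem and rely on the Platonov--Margulis conjecture in exactly the same way.
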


\begin{proof}
The group $\alg G$ is a product of its $\bbQ$-simple factors $\alg G_i$, and $\Lambda$ is 
commensurable to a product of arithmetic subgroups $\Lambda_i \subseteq \alg G_i(\bbQ)$. If each $\alg G_i$ has a finite congruence kernel then so has $\alg G$. Further, $\widehat\Lambda$ is adelic if and only each $\widehat{\Lambda_i}$ is adelic. Hence we may and will assume that $\alg G$ is $\bbQ$-simple. 

Since $\widehat \Lambda$ is an adelic group, it is boundedly generated by~\cite[Theorem~12.2 on p.~220]{Lubotzky-Segal:subgroup-growth}.  
Finally, by~\cite[Theorem 12.10 on p.~ 223]{Lubotzky-Segal:subgroup-growth}, which depends 
on the Platonov-Margulis conjecture as a global assumption, $\alg G$ has a finite congruence kernel. To be more precise, the assumption in \emph{loc.~cit.}~is that $\alg G$ is absolutely simple over a number field. But as a $\bbQ$-simple group $\alg G$ is the Weil restriction of an absolutely simple group $\alg H$ over a number field. So by \emph{loc.~cit.} $\alg H$ has 
a finite congruence kernel, hence $\alg G$ has a finite congruence kernel; see the remark at the beginning of Section~\ref{section:profinite-invariance}. 
\end{proof}

\begin{proof}[Proof of Theorems \ref{thm:main-theorem} and \ref{thm:strengthening}(i)]
By passing to finite index subgroups, we may assume that $\Gamma_1$ and $\Gamma_2$ are profinitely isomorphic.  
As in the proof before, we conclude from the congruence subgroup property of $\alg G_1$ that 
$\widehat{\Gamma}_1\cong \widehat\Gamma_2$ is adelic. 

Assume first that $\Gamma_1$ has a finitely generated infinite normal solvable subgroup. Its closure is a (topologically) finitely generated infinite closed normal solvable subgroup of $\widehat\Gamma_1\cong\widehat\Gamma_2$. The $\ell^2$-Betti numbers of $\Gamma_1$ vanish by a result of Cheeger-Gromov~\cite{cheeger+gromov}, thus $\chi(\Gamma_1)=0$. 

If $\Gamma_2$ had an infinite normal solvable subgroup, then $\chi(\Gamma_2)=0$ for the same reason and the proof would be finished. Otherwise Lemma~\ref{lem: from reductive to semisimple} would imply that $\alg G_2$ is reductive and, upon passing to finite index subgroups, $\Gamma_2$ is an arithmetic subgroup of the semisimple group $\calD(\alg G_2)$. We show that this cannot happen, thus concluding the proof in the case that $\Gamma_1$ has a finitely generated infinite normal solvable subgroup. The preimage $\Lambda$ of $\Gamma_2$ in $\widetilde{\calD(\alg G_2)}$ is commensurable with $\Gamma_2$ by~\cite[(3.2.9) Corollary on p.~64]{Margulis:discrete-subgroups}. 
    Hence $\widehat\Lambda$ is adelic because $\widehat\Gamma_1\cong\widehat\Gamma_2$ is. 
    So $\widetilde{\calD(\alg G_2)}$ has a finite congruence kernel by Theorem~\ref{thm: lubotzky} and the assumption regarding the Platonov-Margulis conjecture in Theorem~\ref{thm:strengthening}. 
    Moreover, $\widehat{\Lambda}$ contains (topologically) finitely generated infinite closed normal solvable subgroup because $\widehat\Gamma_1\cong\widehat\Gamma_2$ does. According to Lemma~\ref{lem: normal solvable subgroups} this is absurd. 
    
Next we assume that $\Gamma_1$ has no finitely generated closed normal infinite solvable subgroup. 
By Lemma~\ref{lem: from reductive to semisimple} the group $\alg G_1$ is reductive. Its derived subgroup 
$\calD(\alg G_1)$ has a finite congruence kernel as well~\cite[Lemma~2]{Raghunathan:csp-survey}. Again by Lemma~\ref{lem: from reductive to semisimple} and upon passing to a finite index subgroup of $\Gamma_1$ we may assume thus that $\alg G_1$ is semisimple and has a finite congruence kernel. By the argument at the beginning of the proof of Lemma~\ref{lem: normal solvable subgroups} we may assume that $\alg G_1$ is simply connected. Hence $\alg G_1$ has strong approximation~\cite[Theorem~7.12 on~p.\,427]{PlatonovRapinchuk}. 

By Lemma~\ref{lem: normal solvable subgroups} the group $\widehat\Gamma_1\cong \widehat\Gamma_2$ has no (topologically) finitely generated infinite closed normal solvable subgroup. In particular, $\Gamma_2$ has no finitely generated infinite normal solvable subgroup. By Lemma~\ref{lem: from reductive to semisimple} the group $\alg G_2$ is reductive, and, upon passing to finite index subgroups and replacing $\alg G_2$ by its derived subgroup, we may assume that $\Gamma_2$ is an $S$-arithmetic subgroup 
of the semisimple group $\alg G_2$. By passing to finite index subgroups once more and appealing to~\cite[(3.2.9) Corollary on p.~64]{Margulis:discrete-subgroups} and replacing $\alg G_2$ by its simply connected covering, we may assume that $\Gamma_2$ is an $S$-arithmetic subgroup of 
the simply connected semisimple group $\alg G_2$, which satisfies strong approximation by~\cite[Theorem~7.12 on~p.\,427]{PlatonovRapinchuk}.  Since $\widehat\Gamma_2$ is adelic, $\alg G_2$ has a finite congruence kernel by Theorem~\ref{thm: lubotzky}. 

 We can then apply Proposition~\ref{prop:Lie-algebras-isomorphic} to conclude that 
 $S$ contains no finite places and Theorem~\ref{thm:f-rank-dim-mod-4} to obtain that \(\dim X_1 = \dim X_2 \bmod 4\) and \(\delta(G_1) = \delta(G_2)\). Note that for the proof of Theorem~\ref{thm:main-theorem} we could just start at this point of the argument.  
  
 The semisimple Lie groups \(G_i\) possess uniform lattices \(\Lambda_i \le G_i\) and \(\Gamma_i\) is \emph{measure equivalent} to \(\Lambda_i\).  Gaboriau's proportionality principle~\cite[Th{\'e}or{\`e}me~6.3]{Gaboriau:invariants-l2} implies that \(b_n^{(2)}(\Gamma_i) = 0\) if and only if \(b^{(2)}_n(\Lambda_i) = 0\).  Borel~\cite{Borel:l2-cohomology} computed that \(b^{(2)}_n(\Lambda_i) \neq 0\) if and only if \(\delta(G_i) = 0\) and \(\dim X_i = 2n\).  As we have \(\chi(\Gamma_i) = \sum_{n \ge 0} (-1)^n b^{(2)}_n(\Gamma_i)\), it follows that
  \[
  \sign \chi(\Gamma_i) = \begin{cases}
    0 \quad &\text{ if } \delta(G_i) > 0\\
    (-1)^{\dim(X_i)/2} &\text{ if }  \delta(G_i) = 0.
  \end{cases} \]
This formula can also be deduced using Harder's Gau{\ss}-Bonnet Theorem \cite{Harder71} and
Hirzebruch's proportionality principle.
  Be aware that \(\delta(G_i) = 0\) implies that \(\dim X_i\) is even:  since every root system has an even number of roots, it follows that \(\delta(G_i)\) and \(\dim X_i\) have the same parity.  This completes the proof.
\end{proof}

The proof of the profiniteness of \(\sign \rho^{(2)}(\Gamma)\) is mostly parallel to the proof of profiniteness of \(\sign \chi(\Gamma)\).

\begin{proof}[Proof of Theorem~\ref{thm:sign-of-l2-torsion} and \ref{thm:strengthening}(ii)]
  In addition to having vanishing \(\ell^2\)-cohomology, groups of  type \((F)\) with infinite elementary amenable normal subgroups also have vanishing \(\ell^2\)-torsion~\cite[Theorem~3.113, p.\,172]{Lueck:l2-invariants}.  Hence as in the previous proof, we may assume that \(\alg{G}_1\) and \(\alg{G}_2\) are semisimple and $S$ contains no finite places.  Further, we obtain \(\dim X_1 = \dim X_2 \bmod 4\) and \(\delta(G_1) = \delta(G_2)\).  Since \(\rank_\bbQ \alg{G}_1 = \rank_\bbQ \alg{G}_2 = 0\), the arithmetic subgroups \(\Gamma_i\) are uniform lattices in \(G_i\).  Thus using the equality of topological and analytic \(\ell^2\)-torsion for closed manifolds~\cite{Burghelea-et-al:torsion}, a result of Olbrich~\cite[Theorem~1.1.(c)]{Olbrich:l2-symmetric} gives \(\rho^{(2)}(\Gamma_i) \neq 0\) if and only if \(\delta(G_i) = 1\).  From Olbrich's formulas in~\cite[Proposition~1.3]{Olbrich:l2-symmetric}, it follows moreover that if \(\delta(G_i) = 1\), then \(\sign \rho^{(2)}(\Gamma_i) = (-1)^{(\dim X_i -1)/2}\). This completes the proof of Theorem~\ref{thm:sign-of-l2-torsion}.
\end{proof}

\section{The Euler characteristic of arithmetic spin groups} \label{section:euler-computation}

In this final section we explicitly compute the Euler characteristic of arithmetic spin groups and as a particular case, we obtain the proof of Theorem~\ref{thm:euler-of-spin-groups}.

Let $V$ be a free $\bbZ$-module of finite rank $d$ with a symmetric bilinear form
$b\colon V \times V \to \bbZ$.  We will assume that the form $b$ is non-singular, i.e.\ for
every primitive vector $v \in V$, there is some $w \in V$ with $b(v,w) = 1$.

The following examples will be of interest for us. Let $m,n \geq 0$ be integers and define
$d = m+n$.  We consider $V_{m,n} = \bbZ^d$ with the standard basis $e_1,\dots,e_d$. The
bilinear form $b_{m,n}$ defined by
$$b_{m,n}(e_i,e_j) = \begin{cases}
  1 \quad & \text{ if } i = j \leq m\\
  -1 \quad & \text{ if } i = j > m\\
  0 \quad & \text{ if } i \neq j
\end{cases}
$$
is non-singular.

For every commutative ring $A$, we put $V_A = A \otimes_\bbZ V$ and we write $b_A$ for the
$A$-bilinear extension of $b$. We get an associated Clifford algebra
$$ C(V_A,b_A) = T_A(V_A) / (v^2 - b_A(v,v)) $$
as a quotient of the tensor algebra $T_A(V_A)$ of $V_A$.  As $A$-module the Clifford algebra
free and, if $e_1,\dots, e_d$ is a basis of $V$, then a basis of $C(V_A,b_A)$ is given by the
elements
$$ e(J) = e_{j_1}\cdot e_{j_2} \cdots e_{j_s} $$
for every subset $J=\{j_1,j_2,\dots,j_s\} \subseteq \{1,\dots, d\}$ with
$j_1 < j_2 < \dots < j_s$; see \cite[IV (1.5.1)]{Knus1991}. Here the convention
$e(\emptyset) = 1$ is used. As a consequence $A \otimes_\bbZ C(V,b) \cong C(V_A,b_A)$.  The
Clifford algebra is $\bbZ/2\bbZ$ graded and decomposes as $C(V_A,b_A) = C_0(V_A,b_A) \oplus C_1(V_A,b_A)$
where $C_0(V_A,b_A)$ is spanned by the $e(J)$ for sets $J$ of even cardinality.

We note that there is a unique anti-automorphism $\iota\colon C(V_A,b_A) \to C(V_A,b_A)$
with $\iota(v) = v$ for all $v \in V_A$ (of order two). Moreover, the grading yields an involution
$x \mapsto x'$ with $x=x_0+x_1$ and $x'= x_0 - x_1$ for all $x_0 \in C_0(V_A,b_A)$ and $x_1 \in C_1(V_A,b_A)$.
Composition of these two maps yields the \emph{conjugation}
$x \mapsto \overline{x} = \iota(x') = \iota(x)'$ on the Clifford algebra. 

\begin{definition}
  For a commutative ring $A$ the \emph{spin group} of $b$ over $A$ is defined by
  $$ \Spn(b)(A) = \{ g \in C_0(V_A,b_A) \mid g \overline{g} = 1 \text{ and } gV_A\overline{g} = V_A\}.$$
\end{definition}
The functor $\Spn(b)$ from the category of commutative rings to the category of groups is an
affine group scheme of finite type over $\bbZ$.  In the following
we investigate only spin groups for the forms $b = b_{m,n}$. In this case the basis vectors
satisfy $e_i\cdot e_j = - e_j\cdot e_i$  in the Clifford algebra for all $i \neq j$.
Hence, we have $\iota(e(J)) = (-1)^{|J|(|J|-1)/2}e(J)$ and therefore the identity
\begin{equation}\label{eq:Clifford-conjugation}
  \overline{e(J)} = (-1)^{|J|(|J|+1)/2} e(J)
\end{equation}
holds for every subset $J \subseteq \{1,\dots,m+n\}$.

\begin{definition}
  Let $m,n > 0$ be integers. In $\Spn(b_{m,n})(\bbZ)$ we define the principal congruence subgroup of level $4$ as
  $$ \Gamma_{m,n} = \ker\bigr( \Spn(b_{m,n})(\bbZ) \to \Spn(b_{m,n})(\bbZ/4\bbZ)\bigl).$$
\end{definition}
\begin{remark}\label{rmk:properties-arithmetic-groups}
  We decided to work with the principal congruence subgroup of level $4$ for two reasons.
  First, a classical result of Minkowski shows that the principal congruence group $\Gamma_{m,n}$ is
  torsion-free (see \cite[III.2.3]{KionkePhD} for a formulation in terms of group
  schemes). Therefore the work of Borel-Serre implies that the arithmetic group $\Gamma_{m,n}$
  is a group of type \(F\) \cite[11.1]{BorelSerre1973}.

  The second reason is that, as we shall see below, the group scheme
  $\Spn(b_{m,n})$ is not smooth at the prime $2$. Passing to the congruence subgroup of level $4$
  avoids some technicalities in the computation of the Euler characteristic.
\end{remark}

\begin{theorem}\label{thm:EulerChar-formula}
  Let $d \geq 3$ with $d = m+n$ for integers $m,n \geq 1$.
  Put $\ell = \lfloor{\frac{d}{2}}\rfloor$ and $k= \lfloor \frac{m}{2} \rfloor$.
  If $m$ and $n$ are odd, then $\chi(\Gamma_{m,n}) = 0$.
  
  If at least one of $m$ and $n$ is even, then
  $$ \chi(\Gamma_{m,n}) = (-1)^{mn/2} R(d) \binom{\ell}{k}  \prod_{j = 1}^{\ell - 1}\Bigl((2^{2j}-1)\: |\zeta(1-2j)|\Bigr)$$
  where $R(d)$ is
  \[ R(d) = \begin{cases}
    2^{5\ell^2-4\ell } (2^\ell-1) |\zeta(1-\ell)| \quad & \text{ if } d \equiv 0 \bmod 4\\
    2^{5\ell^2 - 5\ell +1} \frac{|B_{\psi,\ell}|}{\ell} \quad & \text{ if } d \equiv 2 \bmod 4\\
     2^{5\ell^2} (2^{d-1}-1) |\zeta(2-d)| \quad & \text{ if } d \equiv 1 \bmod 2.
  \end{cases}
  \]
  Here $B_{\psi,\ell}$ is the $\ell$-th generalized Bernoulli number with respect to
  the primitive Dirichlet character $\psi$ modulo $4$.
\end{theorem}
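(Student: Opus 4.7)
My plan is to derive the formula by combining Harder's Gauss--Bonnet theorem with an explicit covolume computation via Prasad's volume formula, treating separately the archimedean place, the odd primes, and the prime $2$.

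First I would dispose of the vanishing case. When both $m$ and $n$ are odd, the complex ranks satisfy
\[ \rank_\bbC \fso(m+n,\bbC) - \rank_\bbC \fso(m,\bbC) - \rank_\bbC \fso(n,\bbC) \;=\; \tfrac{m+n}{2} - \tfrac{m-1}{2} - \tfrac{n-1}{2} \;=\; 1, \]
so the fundamental rank $\delta(\Spin(m,n)(\bbR))$ is positive. By Borel's computation of $\ell^2$-cohomology of arithmetic groups (as recalled in the introduction), all $\ell^2$-Betti numbers of $\Gamma_{m,n}$ vanish and hence $\chi(\Gamma_{m,n})=0$.

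Now assume at least one of $m, n$ is even, so that $\delta=0$, $\dim X = mn$ is even, and $\Gamma_{m,n}$ is torsion-free by Remark~\ref{rmk:properties-arithmetic-groups}. Harder's Gauss--Bonnet theorem together with Hirzebruch proportionality gives
\[ \chi(\Gamma_{m,n}) \;=\; (-1)^{mn/2}\,\chi(X^d)\,\frac{\vol(\Gamma_{m,n}\backslash G)}{\vol(G^u)}, \]
where $G = \Spin(m,n)(\bbR)$, $G^u = \Spin(m+n)(\bbR)$, $X^d = G^u/K$ is the compact dual, and both volumes use the measure induced by a common invariant form on the complexified Lie algebra. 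The Euler characteristic of $X^d$, which is a spin cover of the real oriented Grassmannian of $m$-planes in $\bbR^{m+n}$, equals $\binom{\ell}{k}$ with $\ell = \lfloor d/2\rfloor$ and $k = \lfloor m/2\rfloor$.

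The heart of the argument is the covolume $\vol(\Gamma_{m,n}\backslash G)$, which I would evaluate via Prasad's volume formula applied to the simply-connected $\bbQ$-group $\Spn(b_{m,n})$. This writes the covolume as a product of an archimedean factor involving $\vol(G^u)$, a global factor built from values of $\zeta$ or a Dirichlet $L$-function at negative odd integers, local correction factors at every finite prime, and the level index of $\Gamma_{m,n}$ inside the $\widehat{\bbZ}$-points of $\Spn(b_{m,n})$. The three-fold trichotomy for $R(d)$ mirrors the absolute Dynkin type: for $d$ odd, $\Spn(b_{m,n})$ is of type $B_\ell$ with degrees $2,4,\ldots,2\ell$, producing the factors $\zeta(1-2j)$ for $j=1,\ldots,\ell$; for $d\equiv 0\bmod 4$, the group is split of type $^1D_\ell$ with middle exponent $\ell$ even, contributing $\zeta(1-\ell)$; for $d\equiv 2\bmod 4$, the group is still of type $D_\ell$ but $\ell$ is odd so that $\zeta(1-\ell)=0$, and the relevant replacement is the $L$-value at $1-\ell$ of the quadratic character $\psi$ modulo $4$ attached to the discriminant of $b_{m,n}$, producing $B_{\psi,\ell}/\ell$. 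The auxiliary factors $(2^{2j}-1)$ and the overall powers of $2$ arise from the cardinalities of finite orthogonal groups that appear as reductive quotients of the parahoric stabilizers at odd primes, together with the index of the level $4$ congruence subgroup.

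The main obstacle is the prime $2$. Because the scheme $\Spn(b_{m,n})$ is not smooth over $\bbZ_2$, Prasad's formula cannot be applied in its usual form at this prime, and this is precisely the reason for passing to the principal congruence subgroup of level $4$. The kernel of reduction $\Spn(b_{m,n})(\bbZ_2) \to \Spn(b_{m,n})(\bbZ/4\bbZ)$ is a uniform pro-$2$ group, so its Haar measure can be read off directly from the Lie algebra $\Lie(\Spn(b_{m,n}))(\bbZ_2)$, sidestepping the failure of smoothness. Combining this explicit $2$-adic contribution with the global zeta and $L$-values through $\zeta(1-2j) = -B_{2j}/(2j)$ and the analogous formula for generalized Bernoulli numbers, and carefully collecting the various powers of $2$ from the archimedean normalization, the local factors, and the level index, should yield precisely the stated formula with $R(d)$ expressing the three possible absolute types of the ambient spin group.
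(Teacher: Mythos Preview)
Your strategy is essentially the paper's: a Gauss--Bonnet/Tamagawa-type formula with the covolume factored into local contributions and special care at $p=2$. The paper packages this via the adelic formula of \cite{Kionke2014b} (equation~\eqref{eq:AdelicFormula}) rather than Prasad's formula, reading off the odd-prime factors directly from the orders $|\alg{G}(\bbF_p)|$ of the finite spin groups; the character $\psi$ enters because for $d\equiv 2\bmod 4$ the signed discriminant of $b_{m,n}$ over $\bbF_p$ is $(-1)^\ell=-1$, so the form is of $\ominus$-type precisely when $p\equiv 3\bmod 4$---not because $\zeta(1-\ell)$ happens to vanish. Two small slips: the combinatorial factor is the Weyl-group ratio $|W(\fg_\bbC)|/|W(\fk_\bbC)|=2\binom{\ell}{k}$, not $\binom{\ell}{k}$; and $X^d$ is the oriented Grassmannian itself, not a spin cover of it. At the prime~$2$ the paper is more concrete than your sketch: Lemma~\ref{lem:non-smooth-formula} shows that the Clifford exponential gives an analytic isomorphism $4\Lie(\alg{G})(\bbZ_2)\to K^{(2)}_{m,n}$, from which $\vol_B(K^{(2)}_{m,n})=4^{-d(d-1)/2}$ follows immediately without any appeal to smoothness or parahoric theory. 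With these adjustments your outline and the paper's proof coincide.
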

\begin{remark}
  For the definition of the generalized Bernoulli numbers we refer to \cite[p.441]{Neukirch}.
  We have $B_{\psi,\ell} = 0 $ exactly if $\ell$ is even, a case which does not occur in the
  formula. The generalized Bernoulli numbers can be computed easily. For convenience we list the first values:\\
  \begin{center}
  \begin{tabular}{l@{\hspace{1em}}*{5}{|@{\hspace{1em}}c@{\hspace{1em}}}}
    $\ell$           &  $1$ &   $3$ &   $5$ & $7$ & $9$ \\ \hline
    $B_{\psi,\ell}$ &  $-\frac{1}{2}$  &  $\frac{3}{2}$ &  $-\frac{25}{2}$ &
                                 $\frac{427}{2}$ &  $-\frac{12465}{2}$ \\
  \end{tabular}
\end{center}
\end{remark}

Assuming Theorem~\ref{thm:EulerChar-formula} for the moment, we obtain the proof of Theorem~\ref{thm:euler-of-spin-groups} as a special case.

\begin{proof}[Proof of Theorem~\ref{thm:euler-of-spin-groups}.]
  We consider the groups $\Gamma_{8,2} \subseteq \Spin(8,2)$ and $\Gamma_{4,6} \subseteq \Spin(4,6)$.
  It follows from \cite{Aka2012b} that these groups are profinitely isomorphic. We briefly recall the argument.
  The algebraic
  groups $\Spn(b_{8,2}) \times_\bbZ \bbQ$ and $\Spn(b_{4,6})\times_\bbZ \bbQ$ are simple and simply connected
  and the associated real Lie groups
  $\Spin(8,2) = \Spn(b_{8,2})(\bbR)$ and $\Spin(4,6) = \Spn(b_{4,6})(\bbR)$ are not compact.
  Hence both algebraic groups have strong approximation; see \cite[Theorem~7.2]{PlatonovRapinchuk}.
  Moreover, the quadratic forms $b_{8,2}$ and $b_{4,6}$ have Witt index $2$ and $4$ respectively, therefore
  according to \cite[11.3]{Kneser1979} the congruence kernels of $\Spn(b_{8,2})$ and $\Spn(b_{4,6})$  are trivial.
  We deduce that
  $$ \widehat{\Gamma}_{8,2} = K_{8,2} \times \prod_{p \text{ odd }} \Spn(b_{8,2})(\bbZ_p)$$
  and
  $$ \widehat{\Gamma}_{4,6} = K_{4,6} \times \prod_{p \text{ odd }} \Spn(b_{4,6})(\bbZ_p)$$
  where $K_{m,n} = \ker\bigl( \Spn(b_{m,n})(\bbZ_2) \to \Spn(b_{m,n})(\bbZ/4\bbZ)\bigr)$ is
  the open compact principal congruence subgroup of level $4$.  However, the forms $b_{8,2}$
  and $b_{4,6}$ are isometric over $\bbZ_p$ for every prime number $p$; see \cite[Cor.3]{Aka2012b}.
  Thus the group schemes $\Spn(b_{8,2}) \times_\bbZ \bbZ_p$ and
  $\Spn(b_{4,6})\times_\bbZ \bbZ_p$ are isomorphic for every prime $p$.  In particular,
  $ \Spn(b_{8,2})(\bbZ_p) \cong \Spn(b_{4,6})(\bbZ_p)$ and $K_{8,2} \cong K_{4,6}$; we deduce
  that the profinite completions are isomorphic.

  Now we use Theorem \ref{thm:EulerChar-formula} to compute the Euler characteristic.
  We have $d = 10$ and $\ell = 5$ and thus we obtain $R(d) = 2^{100} \cdot  5$.
  Since $\zeta(-1) = -\frac{1}{12}$, $\zeta(-3) = \frac{1}{120}$, $\zeta(-5) = -\frac{1}{252}$
  and $\zeta(-7) = \frac{1}{240}$ (c.f.\ \cite[\S 1.5]{Edwards1974})
  the product evaluates as
  $$\prod_{j=1}^4\Bigr( (2^{2j}-1) \: |\zeta(1-2j)| \Bigl) = \frac{3 \cdot 15 \cdot 63 \cdot 255}{12 \cdot 120 \cdot 252 \cdot 240} = \frac{17}{ 2^{11} }.$$
  For $m = 8$ we have $k=4$ and with $\binom{5}{4} = 5$ we obtain
  $$\chi(\Gamma_{8,2}) = 2^{89} \cdot 5^2 \cdot 17.$$
  For $m=4$ we have $k=2$ and since $\binom{5}{2} = 10$ we have
  \[ \chi(\Gamma_{4,6}) = 2^{90} \cdot 5^2 \cdot 17. \qedhere \]
\end{proof}

In order to prove Theorem \ref{thm:EulerChar-formula} we need some preparation.
For simplicity we will write from now on $b$ for $b_{m,n}$ and we set $\alg{G} = \Spn(b_{m,n})$.
As a first step, we determine the Lie algebra of $\alg{G}$.
It follows from the next lemma that the group scheme $\alg{G} \times_\bbZ \bbZ_p$ is smooth if $p \neq 2$, but
is not smooth for $p=2$. This problem forces us to be more careful when dealing with the prime $p=2$.

\begin{lemma}\label{lem:LieAlgebraSpin}
  Let $A$ be a commutative ring and let $\ann(2A) = \{a \in A \mid 2a = 0\}$
  be the annihilator of $2A$.
  The Lie algebra of $\alg{G}$ over $A$ is isomorphic to the Lie subalgebra of $C_0(V_A,b_A)$
  given by
  $$\Lie(\alg{G})(A) \cong \bigoplus_{|J|=2}A e(J)
  \oplus \bigoplus_{\substack{J \text{even}\\ |J| \neq 2 }}\ann(2A) e(J)$$
  where the sums run over subsets $J \subseteq \{1,\dots,d\}$ of even cardinality.
\end{lemma}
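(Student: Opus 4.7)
The plan is to compute the Lie algebra via the functor of points on the dual numbers: $\Lie(\alg{G})(A) = \ker(\alg{G}(A[\epsilon]/\epsilon^2)\to\alg{G}(A))$. A general element of this kernel has the form $g = 1 + \epsilon x$ with $x \in C_0(V_A,b_A)$. Translating the two defining conditions of $\Spn(b)$ to first order in $\epsilon$, the condition $g\overline{g} = 1$ becomes $x + \overline{x} = 0$, and given this, the identity $gv\overline{g} = v + \epsilon[x,v]$ for $v \in V_A$ shows that $gV_A\overline{g} \subseteq V_A$ is equivalent to $[x,V_A]\subseteq V_A$. Equality rather than inclusion is automatic since $v\mapsto v + \epsilon[x,v]$ is an $A[\epsilon]$-linear automorphism of $V_{A[\epsilon]}$.

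Next, I would expand $x = \sum_{|J| \text{ even}} c_J\, e(J)$ and analyze each condition basis-wise. From \eqref{eq:Clifford-conjugation}, for even $|J|$ we have $\overline{e(J)} = (-1)^{|J|/2} e(J)$, so condition $x + \overline{x}=0$ imposes $2c_J = 0$ exactly when $|J| \equiv 0 \pmod 4$, and is vacuous when $|J|\equiv 2\pmod 4$.

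For the bracket condition, a short sign count shows that $[e(J),e_k] = 0$ if $k\notin J$ (since $e(J)$ commutes with $e_k$ when $|J|$ is even), and $[e(J),e_k] = \pm 2\, b(e_k,e_k)\, e(J\setminus\{k\})$ if $k\in J$. Because $b(e_k,e_k) = \pm 1$ is a unit in $A$, the commutator $[x,e_k]$ receives, for each $J\ni k$, a contribution of degree $|J|-1$ with coefficient a unit multiple of $2c_J$. Demanding $[x,e_k]\in V_A$ thus forces $2c_J = 0$ as soon as $|J|-1 \geq 3$, i.e.\ $|J|\geq 4$; for $|J|=2$ the commutator already lies in $V_A$ and no further constraint appears.

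Combining the two sets of constraints gives: $c_J\in A$ is unrestricted when $|J|=2$, while $c_J\in\ann(2A)$ for every even $|J|\geq 4$ (the case $|J|\equiv 2\bmod 4$ coming solely from the bracket condition, the case $|J|\equiv 0\bmod 4$ from either condition). Reading the steps backwards shows that any such $x$ indeed satisfies $x+\overline{x}=0$ and $[x,V_A]\subseteq V_A$, so the decomposition asserted in the lemma is exact. The Lie bracket inherited from the commutator on $C_0(V_A,b_A)$ is the one on $\Lie(\alg{G})(A)$ coming from the group scheme structure, which needs no separate verification. The only delicate point is the sign bookkeeping in the commutator formulas above; everything else is formal manipulation of the dual-numbers description.
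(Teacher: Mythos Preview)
Your argument is correct and mirrors the paper's proof: both compute the Lie algebra via dual numbers, translate the defining conditions of $\Spn(b)$ into $x+\overline{x}=0$ and $[x,V_A]\subseteq V_A$, and then analyze these coefficient-wise using the Clifford relations. One small slip in your ``Combining'' paragraph: you write $c_J\in\ann(2A)$ for every even $|J|\geq 4$, but the case $|J|=0$ (the scalar term $c_\emptyset$) must also lie in $\ann(2A)$, which your own conjugation analysis already gives since $0\equiv 0\pmod 4$; including it recovers the full decomposition stated in the lemma.
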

\begin{proof}
  Consider the ring $A[\varepsilon]$ with $\varepsilon^2=0$.
  Recall that the  Lie algebra is defined as
  $$\Lie(\alg{G})(A) = \{X \in C_0(V_A,b_A) \mid  1 +\varepsilon X \in \alg{G}(A[\varepsilon]) \}.$$
  Let $X \in C_0(V_A,b_A)$.
  Write
  $$ X = \sum_{J \text{ even }} x_J e(J)$$
  with coefficients $x_J \in A$ and define $g = 1+\varepsilon X$. Note that $g^{-1} = 1- \varepsilon X$.
  We determine under which conditions $X \in \Lie(\alg{G})(A)$.
  
  Using \eqref{eq:Clifford-conjugation} we see that
  $1 = g \overline{g} = 1 + \varepsilon (X + \overline{X}) $
  holds exactly if $x_J \in \ann(2A)$ for all $J$ with $|J| \equiv 0 \bmod 4$.  Moreover, $g$
  satisfies $g e_i g^{-1} \in V_{A[\varepsilon]}$ if and only if $X e_i - e_i X \in V_{A}$ for
  all $i$.  Let $J \subseteq \{1,\dots, d\}$ be a set with an even number of elements.  Then
  $e(J)e_i -e_ie(J) = 0$ if $i \not\in J$.  However, if $i\in J$, then
  $$e(J)e_i - e_i e(J) = e_i^2 (-1)^{|\{j\in J\mid j > i\}|} 2 e(J\setminus\{i\}).$$  We deduce
  that $Xe_i - e_iX \in V_A$ is satisfied precisely when $x_J \in \ann(2A)$ for all $J$ with
  $|J| > 2$.  We leave it to the reader to verify that the Lie algebra structure is indeed
  induced by the commutator bracket on $C_0(V_A,b_A)$. For instance,
  one can use the formula given in \cite[II,\S4,4.2]{DemazureGabriel}.
\end{proof}

\begin{proof}[Proof of Theorem \ref{thm:EulerChar-formula}]
  We first fix some notation.  Let $\alg{G}(\bbR) = \Spin(m,n)$ be the associated real spin
  group.  The Lie algebra $\calL(\Spin(m,n))$ will be denoted by $\fg$. We identify $\fg$ with
  a Lie subalgebra of the Clifford algebra $C(V_\bbR,b_\bbR)$; c.f.~Lemma
  \ref{lem:LieAlgebraSpin}.  The vectors $e(J)$ where $J$ runs through the two-element subsets
  of $\{1,\dots,d\}$ are a basis of $\fg$.  The subalgebra $\fk$ spanned by the $e(J)$ where
  $J \subseteq \{1,\dots,m\}$ or $J \subseteq \{m+1,\dots,m+n\}$ is maximal compact.  The
  corresponding maximal compact subgroup will be denoted by $K_\infty$.  A Cartan
  decomposition is given by $ \fg = \fk \oplus \fp $ where $\fp$ is spanned by the $e(J)$ with
  $J =\{i,j\}$ satisfying $i \leq m < j$.

  Let $X = \Spin(m,n)/K_\infty$ denote the
  associated Riemannian symmetric space.  Note that $\dim(X) = mn$ and
  $\dim(\alg{G}\times_\bbZ \bbQ) = d(d-1)/2$ where $d = m+n$.  Since $X$ is simply connected
  and $\Gamma_{m,n}$ acts freely and properly on $X$, the quotient space $X/\Gamma_{m,n}$ is
  the classifying space of $\Gamma_{m,n}$. We will calculate the Euler characteristic of this
  space.  If $m$ and $n$ are odd, then the Euler-Poincar\'e measure on $\Spin(m,n)$ vanishes
  and $\chi(\Gamma_{m,n}) = 0$; see\ \cite{Serre1971} or \cite[Thm.~3.1]{Kionke2014b}. From
  now on we assume that $\dim(X) = mn$ is even.

  The linear algebraic group $\alg{G} \times_\bbZ \bbQ$ is simple and simply
  connected~\cite[Theorem~24.61 on p.~534]{milne:algebraic-groups}.  The
  associated real Lie group $\alg{G}(\bbR) = \Spin(m,n)$ is not compact, since we assume
  $m,n \geq 1$. We infer that $\alg{G}$ has strong approximation; see
  \cite[Thm.~7.12]{PlatonovRapinchuk}.  It follows that the inclusion
  $\Spin(m,n) \to \alg{G}(\bbA)$ induces a homeomorphism
  $$ \alg{G}(\bbQ) \backslash \alg{G}(\bbA)/K_\infty K_f \cong X/\Gamma_{m,n} $$
  where $K_f = K^{(2)}_{m,n} \times \prod_{p \text{ odd }} \alg{G}(\bbZ_p)$ is an
  open compact subgroup with
  $$ K^{(2)}_{m,n} = \ker\bigl( \alg{G}(\bbZ_2) \to \alg{G}(\bbZ/4\bbZ)\bigr).$$
  
  We will compute the Euler characteristic using the adelic formula given in Theorem 3.3 in
  \cite{Kionke2014b}. We choose $B$ to be the symmetric bilinear form on $\Lie(\alg{G})(\bbQ)$
  for which the vectors $e(J)$ are an
  orthonormal basis.
  The form $B$ is \emph{nice} in the sense of \cite{Kionke2014b}, i.e., the Cartan decomposition
  on $\fg = \fk \oplus \fp$ given above is orthogonal. Moreover, $B$ induces a volume form $\vol_B$ on $\alg{G}(\bbQ_v)$ at every place of
  $v$ of $\bbQ$. Now Theorem 3.3 in \cite{Kionke2014b} yields
  \begin{equation}\label{eq:AdelicFormula}
    \chi( X/\Gamma_{m,n} )
    = (-1)^{mn/2} \frac{|W(\fg_\bbC)| \: \tau(\alg{G})}{|W(\fk_\bbC)|}
    \vol_B(G_u)^{-1} \vol_B(K_f)^{-1}
  \end{equation}
  where $\tau(\alg{G})$ is the Tamagawa number of $\alg{G}$ and $W(\fg_\bbC)$ and
  $W(\fk_\bbC)$ denote the Weyl groups of the complexified Lie algebras of $\fg_\bbC$ and
  $\fk_\bbC$ respectively. Moreover, $G_u$ denotes the compact dual group,i.e.,\ the compact group $\Spin(d)$
  in our case.

  Now we evaluate the terms in the formula step by step. We put $\ell = \lfloor\frac{d}{2}\rfloor$ and
  $k = \lfloor\frac{m}{2}\rfloor$ and $k' = \lfloor\frac{n}{2}\rfloor$.
  Observe that $\ell = k+k'$ since we excluded the case that both $m$ and $n$ are odd.

  In evaluating the volume $\vol_B(K_f)$ there is, however, a subtle point: the adelic formula
  in \cite{Kionke2014b} is based on the assumption that the underlying group scheme is smooth
  over $\bbZ$. This assumption is only used in evaluating $\vol_B(K_f)$.  As we have seen in
  \ref{lem:LieAlgebraSpin} our group scheme $\alg{G}$ is not smooth over $\bbZ$ since there is
  a problem at the prime $2$. In the last step we shall take care of this problem.

  \medskip

  \emph{Tamagawa number:}  $\tau(\alg{G}) = 1$.\\
  Since $d \geq 2$ the spin group $\alg{G}\times_\bbZ \bbQ$ is semi-simple and simply
  connected. The assertion follows from Kottwitz' Tamagawa number theorem \cite{Kottwitz1988}.
  For spin groups this was already observed by Tamagawa and Weil.

  \medskip

  \emph{Orders of Weyl groups:}  $ \frac{|W(\fg_\bbC)|}{|W(\fk_\bbC)|} = 2 \binom{\ell}{k}$.\\
  If $d = 2\ell$ is even, then $\fg_\bbC$ is a simple Lie algebra of type $D_\ell$
  and $\fk_\bbC$ is a product of simple Lie algebras of type $D_k$ and $D_{k'}$.
  The table in \cite[p.~66]{Humphreys}, yields
  $$ \frac{|W(\fg_\bbC)|}{|W(\fk_\bbC)|} = \frac{2^{\ell-1}\ell!}{2^{k-1}k! \: 2^{k'-1}k'!} = 2 \binom{\ell}{k}.$$
  Similarly, if $d = 2\ell +1$, then $\fg_\bbC$ is a simple Lie algebra of type $B_\ell$ and the Weyl group has order
  $2^\ell \ell!$; see \cite[p.~66]{Humphreys}. Now
  $\fk_\bbC$ is a product of two simple Lie algebras either of types $B_k$ and $D_{k'}$ or of
  types $D_{k}$ and $B_{k'}$. A short calculation yields the formula.

  \medskip

  \emph{Volume of $G_u$:} $\vol_B(G_u) = 2^{(3d-d^2)/2}\prod_{j=2}^d \pi^{j/2} \Gamma(j/2)^{-1}$.\\
  The compact dual group $G_u$ is $\Spin(d)$. Since $\Spin(d)$ is a two-fold covering of
  $\SO(d)$ we obtain $\vol_B(\Spin(d)) = 2 \vol_B(\SO(d))$.  However, we have to relate the
  induced left invariant Riemann metric $B$ to the standard left invariant metric $\gamma$ on
  $\SO(d)$. More precisely, the vectors $v_{i,j} = E_{i,j} - E_{j,i}$ with $i<j \leq d$, where
  $E_{i,j}$ denotes the elemenary matrix with entry $1$ in position $(i,j)$, form a basis of
  the Lie algebra $\fso(d)$. At the identity $\gamma$ is the symmetric bilinear form for which
  $(v_{i,j})_{i<j}$ is an orthonormal basis.  Using induction one shows that the volume with
  respect to $\gamma$ is
  $$\vol_\gamma(\SO(d)) = \prod_{j=2}^d\vol(S^{j-1}) = \prod_{j=2}^d 2 \frac{\pi^{j/2}}{\Gamma(j/2)}.$$
  The tangent map of the projection $p\colon \Spin(d) \to \SO(d)$ maps the
  basis vector $e(I)$ with $I = \{i,j\}$ and $i < j$ to $2 v_{i,j}$,
  therefore $B = \frac{1}{4} \gamma$ and
  $$ \vol_B(\SO(d)) = 2^{-d(d-1)/2}\vol_\gamma(\SO(d)).$$

  \medskip

  \emph{Local volume $\vol_B(K_f)$:}\\
  Here we obtain a formula for the local volume
  $$\vol_B(K_f) = \vol_B(K_{m,n}^{(2)})\prod_{p \text{ odd }} \vol_B(\alg{G}(\bbZ_p)).$$
  As in
  \cite{Kionke2014b} we use the smoothness of $\alg{G}\times_\bbZ \bbZ_p$
  for all \emph{odd} primes to apply  Weil's formula
  $\vol_B(\alg{G}(\bbZ_p)) = |\alg{G}(\bbF_p)|p^{-\dim \alg{G}}$.
  The special $p=2$ is discussed in Lemma
  \ref{lem:non-smooth-formula} below, which yields
  $\vol_B(K^{(2)}_{m,n}) = 4^{-d(d-1)/2} = 2^{-d(d-1)}$.
  It remains to evaluate the infinite product over all odd primes
  $$\prod_{p \text{ odd }} |\alg{G}(\bbF_p)|^{-1} p^{d(d-1)/2}.$$
  Recall that over $\bbF_p$ there are exactly two quadratic forms in $d$ variables.
  They are uniquely determined by their discriminant in $\bbF_p^\times/(\bbF_p^\times)^2$; see
  \cite[Thm.~3.8]{Scharlau1985}.
  We note that the canonical map $\alg{G}(\bbF_p) \to \alg{\SO}(m,n)(\bbF_p)$ has a two-element kernel
  and, as $d\geq 3$, the image has index $2$ in $\alg{\SO}(m,n)(\bbF_p)$, thus
  $|\alg{G}(\bbF_p)| = |\alg{\SO}(m,n)(\bbF_p)|$.

  \smallskip

  Case 1: $d=2\ell$ is even.\\
  Let $p$ be an odd prime number.
  By assumption $m$ and $n$ are even, hence the discriminant $\det(b_{m,n}) = 1$.
  If a quadratic space $(V,q)$ of dimension $d$ over $\bbF_p$ splits
  as an orthogonal sum of hyperbolic planes, then we say that $q$ is of $\oplus$-type.
  Otherwise, $q$ has an anisotropic kernel of dimension $2$ and we say that $q$ is of $\ominus$-type.

  \smallskip
  
  Case 1a: $d \equiv 0 \bmod 4$.\\
  In this case $\ell$ is even, thus
  $\det(b_{m,n}) = 1 = (-1)^\ell = \det(H^{\perp \ell})$
  where $H$ denotes the hyperbolic plane. We deduce that $b_{m,n}$ is of $\oplus$-type over $\bbF_p$.
  In this case 
  $|\alg{G}(\bbF_p)| = p^{\ell(\ell-1)} (p^\ell -1) \prod_{j=1}^{\ell-1}(p^{2j}-1)$; see \cite[p.~147]{ArtinGA}.
  We obtain
  \begin{align*}
    \prod_{p \text{ odd }} |\alg{G}(\bbF_p)|^{-1} p^{d(d-1)/2}
    & = \prod_{p \text{ odd }} (1-p^{-\ell})^{-1}\prod_{j=1}^{\ell-1}(1-p^{-2j})^{-1}\\
    &= \zeta(\ell)(1-2^{-\ell}) \prod_{j=1}^{\ell-1} \zeta(2j) (1-2^{-2j}),
  \end{align*}
  where $\zeta$ is the Riemann zeta function.

  \smallskip

  Case 1b: $d \equiv 2 \bmod 4$.\\
  In this case $\ell$ is odd and as in Case 1a) we see that $b_{m,n}$ is of $\oplus$-type exactly
  if $-1$ is a square, i.e.  $p \equiv 1 \bmod 4$.
  Let $\psi$ denote the unique primitive Dirichlet character modulo $4$, then the order of the spin group is
  $|\alg{G}(\bbF_p)| = p^{\ell(\ell-1)} (p^\ell - \psi(p)) \prod_{j=1}^{\ell-1}(p^{2j}-1)$; see \cite[p.~147]{ArtinGA}.
  Using this we obtain 
   \begin{align*}
    \prod_{p \text{ odd }} |\alg{G}(\bbF_p)|^{-1} p^{d(d-1)/2}
    & = \prod_{p \text{ odd }} (1-\psi(p)p^{-\ell})^{-1}\prod_{j=1}^{\ell-1}(1-p^{-2j})^{-1}\\
    &= L(\psi,\ell) \prod_{j=1}^{\ell-1} \zeta(2j) (1-2^{-2j}),
  \end{align*}
  where $L(\psi,s)$ is the Dirichlet $L$-function attached to $\psi$.
  
  \smallskip

  Case 2: $d=2\ell + 1$ is odd.\\
  In this case the order is $|\alg{G}(\bbF_p)| = p^{\ell^2} \prod^{\ell}_{j=1}(p^{2j}-1)$; see \cite[p.~147]{ArtinGA}.
  Consequently, we obtain the formula
  \begin{equation*}
    \prod_{p \text{ odd }} |\alg{G}(\bbF_p)|^{-1} p^{d(d-1)/2}
    =  \prod_{j=1}^{\ell} \zeta(2j) (1-2^{-2j}),
  \end{equation*}

  \medskip

  Finally, we multiply the terms and simplify using the functional equations of the Riemann
  zeta function and the Dirichlet $L$-function.  More precisely, the functional equation of
  the $\zeta$-function \cite[VII.~(1.6)]{Neukirch} and the well-known identity
  $\Gamma(\frac{1}{2}-j)\Gamma(\frac{1}{2}+j) = (-1)^j \pi$
  imply that
  $$ \zeta(2j) \:  \pi^{-2j/2} \Gamma(\frac{2j}{2})\:  \pi^{-(2j+1)/2}\Gamma(\frac{2j+1}{2}) = |\zeta(1-2j)|.$$
  This identity makes it possible to combine one factor of the product in $\vol_B(K_f)^{-1}$
  with two consecutive factors of the product occuring in $\vol_B(G_u)^{-1}$.
  If $d$ is even, there is a remaining term which needs to be simplified.
  If $d \equiv 0 \bmod 4$, then $\zeta(\ell)\pi^{-\ell}\Gamma(\ell) = 2^{\ell-1} |\zeta(1-\ell)|$
  as can be seen using the functional equation.
  For the case $d \equiv 2 \bmod 4$ the functional equation of the $L$-function \cite[VII.~(2.8)]{Neukirch}
  yields $$L(\psi,\ell)\pi^{-\ell}\Gamma(\ell) =  2^{-\ell} |L(\psi,1-\ell)|.$$
  
  Eventually, we use $L(\psi,1-\ell) = -\frac{B_{\psi,\ell}}{\ell}$ to express the special $L$-value
  in terms of generalized Bernoulli numbers; see \cite[VII.~(2.9)]{Neukirch}.
\end{proof}

\begin{lemma}\label{lem:non-smooth-formula}
  In the notation above, we have
  $\vol_B(K^{(2)}_{m,n}) = 4^{-d(d-1)/2}$.
\end{lemma}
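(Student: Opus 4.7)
The plan is to identify $K^{(2)}_{m,n}$ with a lattice in the analytic Lie algebra $\fg = \Lie(\alg{G})(\bbQ_2)$ via the $2$-adic logarithm and observe that this identification preserves Haar measure up to a $2$-adic unit. By Lemma~\ref{lem:LieAlgebraSpin} applied with $A = \bbQ_2$ (where $\ann(2\bbQ_2) = 0$), the analytic Lie algebra $\fg$ equals $\bigoplus_{|J|=2} \bbQ_2\, e(J)$, and the lattice $L = \bigoplus_{|J|=2} \bbZ_2\, e(J)$ satisfies $\vol_B(L) = 1$ since the basis $\{e(J) : |J|=2\}$ is $B$-orthonormal by construction.

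First I would verify that $\log(g) = -\sum_{n \ge 1}(1-g)^n/n$ converges on $K^{(2)}_{m,n}$ and gives a bijection onto $4L$. Any $g \in K^{(2)}_{m,n}$ has the form $g = 1 + 4y$ for some $y \in C_0(V_{\bbZ_2}, b_{\bbZ_2})$, so the $2$-adic valuation of $(-4y)^n/n$ is at least $2n - v_2(n)$ which tends to $\infty$. Hence $\log(g) \in 4\, C_0(V_{\bbZ_2}, b_{\bbZ_2})$, and since $g$ lies in the $\bbQ_2$-analytic Lie subgroup $\alg{G}(\bbQ_2)$ of $C_0(V_{\bbQ_2}, b_{\bbQ_2})^\times$ with Lie algebra $\fg$, we also have $\log(g) \in \fg$. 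The intersection $\fg \cap 4\, C_0(V_{\bbZ_2}, b_{\bbZ_2})$ equals $4L$. Conversely, for $X \in 4L$ the exponential series converges in $1 + 4\, C_0(V_{\bbZ_2}, b_{\bbZ_2})$, and $\exp(X)$ takes values in $\alg{G}(\bbQ_2)$ because $X \in \fg$. The integrality $\exp(X) \in C_0(V_{\bbZ_2}, b_{\bbZ_2})$ is then immediate, and conjugation by $\exp(X)$ preserves $V_{\bbZ_2}$ because $\exp(X)$ and its inverse both lie in $1 + 4\, C_0(V_{\bbZ_2}, b_{\bbZ_2})$. Hence $\exp\colon 4L \to K^{(2)}_{m,n}$ is inverse to $\log$.

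Next I would compute the Jacobian of $\exp$ at any point $X \in 4L$. The standard formula gives
\[
J_{\exp}(X) = \det\left(\sum_{k \ge 0} \frac{(-\operatorname{ad} X)^k}{(k+1)!}\right),
\]
and because $\operatorname{ad} X$ acts on $\fg$ with image contained in $4\fg$, every term beyond the leading summand $1$ has strictly positive $2$-adic valuation. Thus $J_{\exp}(X) \equiv 1 \pmod{2\bbZ_2}$ and $|J_{\exp}(X)|_2 = 1$. The pullback of the left Haar measure $\vol_B$ along $\exp$ therefore coincides with the $B$-induced Lebesgue measure on $4L$, and one concludes
\[
\vol_B(K^{(2)}_{m,n}) = \vol_B(4L) = |4|_2^{d(d-1)/2}\, \vol_B(L) = 4^{-d(d-1)/2}.
\]

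The main obstacle is the identification $\log(K^{(2)}_{m,n}) = 4L$: one has to rule out that the logarithm of a group element produces nonzero components along the basis vectors $e(J)$ with $|J|>2$, and this relies precisely on the distinction between the analytic Lie algebra of $\alg{G}(\bbQ_2)$ (governed by Lemma~\ref{lem:LieAlgebraSpin} with $A = \bbQ_2$) and the scheme-theoretic Lie algebra at the special fibre $A = \bbF_2$. The depth-$4$ condition is essential for the same reason: at level two the $2$-adic logarithm would not converge, and the non-smoothness of $\alg{G}\times_\bbZ \bbZ_2$ at the special fibre would obstruct the clean Jacobian calculation.
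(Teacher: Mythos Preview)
Your proof is correct and follows essentially the same route as the paper: both establish that $\exp\colon 4L \to K^{(2)}_{m,n}$ is an analytic bijection (with inverse $\log$) and then compare volumes. The only stylistic difference is that the paper verifies $\exp(x)\overline{\exp(x)}=1$ and $\exp(x)V_{\bbZ_2}\overline{\exp(x)}=V_{\bbZ_2}$ by direct computation (using $\overline{x}=-x$ and the identity $\exp(x)\,v\,\exp(-x)=\exp(\mathrm{ad}_x)(v)$), whereas you obtain $\exp(X)\in\alg{G}(\bbQ_2)$ via Lie theory of the ambient unit group and then deduce integrality from $V_{\bbQ_2}\cap C(V_{\bbZ_2},b)=V_{\bbZ_2}$; the paper also leaves the Jacobian as ``straightforward'' while you spell it out.
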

\begin{proof}
  We construct an explicit chart which will allow us to compute the volume.  The exponential
  series converges on $4 C_0(V_{\bbZ_2},b)$ and defines an analytic function
  with values in $1 + 4 C_0(V_{\bbZ_2},b)$.  Let $x \in 4
  \Lie(\alg{G})(\bbZ_2)$. Then $\overline{x} = -x$ commutes with $x$, thus
  $\exp(x)\overline{\exp(x)} = \exp(x-x) = 1$. Moreover, we claim that
 $\exp(x)v\overline{\exp(x)} \in V_{\bbZ_2}$ for every $v \in V_{\bbZ_2}$.  Indeed, consider
  the endomorphism $\mathrm{ad}_x$ of $V_{\bbZ_2}$ defined by $\mathrm{ad}_x(v) = xv - vx$. Then a short calculation
  yields
  $$\exp(x)\: v \: \overline{\exp(x)} = \exp(\mathrm{ad}_x) (v) \in V_{\bbZ_2}.$$
  We deduce that the exponential function maps $4 \Lie(\alg{G})(\bbZ_2)$ to the group $K^{(2)}_{m,n}$.
  Similarly, the logarithmic series $L(1+a) = \sum_{k=1}^\infty \frac{(-1)^{k-1}}{k}a^k$
  converges on $K_{m,n}^{(2)}$ and with similar arguments one verifies that
  $L(K_{m,n}^{(2)})\subseteq 4\Lie(\alg{G})(\bbZ_2)$.
  Since $\exp$ and $L$ are inverses of each other, we deduce that
  $$\exp \colon  4\Lie(\alg{G})(\bbZ_2) \to K^{(2)}_{m,n}$$
  is an analytic isomorphism.
  It is straightforward to check that the pullback of the volume density on $K^{(2)}_{m,n}$
  via the exponential map to $4\Lie(\alg{G})(\bbZ_2)$ yields the standard volume.
  As a consequence we obtain $\vol_B(K^{(2)}_{m,n}) = 4^{-\dim \alg{G}}$.
\end{proof}

\begin{example} \label{dim_infty_not_profin}
  As discussed in the introduction our methods do not suffice to prove Theorem
  \ref{thm:main-theorem} for $S$-arithmetic groups. The sign of the
  Euler characteristic of an arithmetic group depends only on the archimedean place,
  whereas for $S$-arithmetic
  groups the sign depends on all places in $S$. This makes it necessary to understand the subtle
  interplay between the places. One class of examples which illustrates
  this behavior quite well are special linear groups over quaternion algebras.
  Other intruiging examples arise from $S$-arithmetic spin groups, as we will see now.

  We consider the set  $S=\{2, \infty\}$ of places of the field $\bbQ$ and the two groups
  $\Spn(b_{4,1})$ and $\Spn(b_{2,3})$.
  The quadratic forms $b_{4,1}$ and $b_{2,3}$ are equivalent over $\bbZ_p$ for every prime $p > 2$.
  Indeed, for odd primes $b_{3,0}$ is isotropic over $\bbZ_p$; see \cite[Lemma 1.7, p.41]{Cassels1978}.
  Hence $b_{3,0}$ splits into a hyperbolic plane and a $b_{0,1}$.  This proves the assertion, since
  $b_{1,1}$ is equivalent to the hyperbolic plane over $\bbZ_p$.
  We note further that $b_{3,0}$ is anisotropic over $\bbQ_2$ (\cite[Lemma 2.5, p.59]{Cassels1978}) and we deduce that
  the Witt index of $b_{4,1}$ is $1$ over $\bbQ_2$.

  The rank of a spin group over a field of characteristic $\neq 2$ is the Witt index of the defining quadratic form.
  We deduce that
  \begin{align*}
    \rank_S\Spn(b_{4,1}) &= \rank_{\bbR} \Spn(b_{4,1}) + \rank_{\bbQ_2} \Spn(b_{4,1}) = 1+ 1 = 2,\\
    \rank_S\Spn(b_{2,3}) &=  \rank_{\bbR} \Spn(b_{2,3}) + \rank_{\bbQ_2} \Spn(b_{2,3}) = 2+ 2 = 4. 
  \end{align*} 
  In particular, the $S$-arithmetic groups $\Delta_1 = \Spn(b_{4,1})(\bbZ[1/2])$ and $ \Delta_2 = \Spn(b_{2,3})(\bbZ[1/2])$ have
  the congruence subgroup property (cf.~\cite[Theorem~5]{Kammeyer-Sauer:spinor-groups})
  and are hence profinitely isomorphic.

  The symmetric space $X_1$ associated to $\Spin(4,1)$ has dimension $4$, whereas
  the symmetric space $X_2$ of $\Spin(2,3)$ has dimension $6$.
  In particular, $\dim X_1 \not\equiv \dim X_2 \bmod 4$ and the useful
  Theorem~\ref{section:profinite-invariance} fails in the $S$-arithmetic case.
  However, the Euler characteristics of $\Delta_1$ and $\Delta_2$ have nevertheless the same sign.
  Using Serre's description of the Euler-Poincar\'e measure \cite{Serre1971}
  we see that
  the sign of the Euler characteristic of $\Delta_1$ is
  $$\sign(\chi(\Delta_1)) = (-1)^{\dim(X_1)/2} \cdot (-1)^{\rank_{\bbQ}(\Spn(b_{4,1}))} = -1$$
  and the sign of the Euler characteristic of $\Delta_2$ is
  $$\sign(\chi(\Delta_2)) = (-1)^{\dim(X_2)/2} \cdot (-1)^{\rank_{\bbQ}(\Spn(b_{2,3}))} = -1.$$
  The problem $\dim X_1 \not\equiv \dim X_2 \bmod 4$ is repaired by the change of the $\bbQ_2$-rank
  modulo $2$, i.e.\
  $\rank_{\bbQ_2} \Spn(b_{4,1}) \not\equiv \rank_{\bbQ_2} \Spn(b_{2,3}) \bmod 2$.
  \end{example}

 \bibliography{literatur}

\providecommand{\bysame}{\leavevmode\hbox to3em{\hrulefill}\thinspace}
\providecommand{\MR}{\relax\ifhmode\unskip\space\fi MR }
\providecommand{\MRhref}[2]{%
  \href{http://www.ams.org/mathscinet-getitem?mr=#1}{#2}
}
\providecommand{\href}[2]{#2}
\begin{thebibliography}{10}

\bibitem{Agol_VH}
Ian Agol, \emph{The virtual {H}aken conjecture}, Doc. Math. \textbf{18} (2013),
  1045--1087, With an appendix by Agol, Daniel Groves, and Jason Manning.
  \MR{3104553}

\bibitem{Aka2012a}
M.~Aka, \emph{Arithmetic groups with isomorphic finite quotients}, J. Algebra
  \textbf{352} (2012), 322--340.

\bibitem{Aka2012b}
\bysame, \emph{Profinite completions and {K}azhdan's property ({T})}, Groups
  Geom. Dyn. \textbf{6} (2012), no.~2, 221--229.

\bibitem{AnWang2008}
J.~An and Z.~Wang, \emph{{Nonabelian cohomology with coefficients in Lie
  groups}}, Trans.~Amer.~Math.~Soc. \textbf{360} (2008), no.~6, 3019--3040.

\bibitem{ArtinGA}
E.~Artin, \emph{Geometric algebra}, Interscience Publishers, Inc., New
  York-London, 1957.

\bibitem{Baumslag}
Gilbert Baumslag, \emph{Residually finite groups with the same finite images},
  Compositio Math. \textbf{29} (1974), 249--252. \MR{0357615}

\bibitem{Bergeron_Haglund_Wise}
Nicolas Bergeron, Fr\'ed\'eric Haglund, and Daniel~T. Wise, \emph{Hyperplane
  sections in arithmetic hyperbolic manifolds}, J. Lond. Math. Soc. (2)
  \textbf{83} (2011), no.~2, 431--448. \MR{2776645}

\bibitem{Boileau_Friedl}
Michel Boileau and Stefan Friedl, \emph{The profinite completion of
  $3$-manifold groups, fiberedness and the thurston norm}, 2015.

\bibitem{Borel:introduction-aux-groupes-arithmetiques}
A.~Borel, \emph{Introduction aux groupes arithm\'etiques}, Publications de
  l'Institut de Math\'ematique de l'Universit\'e de Strasbourg, XV.
  Actualit\'es Scientifiques et Industrielles, No. 1341, Hermann, Paris, 1969.
  \MR{0244260}

\bibitem{Borel:l2-cohomology}
\bysame, \emph{The {$L^2$}-cohomology of negatively curved {R}iemannian
  symmetric spaces}, Ann. Acad. Sci. Fenn. Ser. A I Math. \textbf{10} (1985),
  95--105. \MR{802471}

\bibitem{BorelSerre1973}
A.~Borel and J.-P. Serre, \emph{Corners and arithmetic groups}, Comment. Math.
  Helv. \textbf{48} (1973), 436--491, Avec un appendice: Arrondissement des
  vari\'et\'es \`a coins, par A. Douady et L. H\'erault.

\bibitem{Bridson-Conder-Reid:Fuchsian}
M.~R. Bridson, M.~D.~E. Conder, and A.~W. Reid, \emph{Determining {F}uchsian
  groups by their finite quotients}, Israel J. Math. \textbf{214} (2016),
  no.~1, 1--41. \MR{3540604}

\bibitem{BmRRS}
M.~R. Bridson, D.~B. McReynolds, A.~W. Reid, and R.~Spitler, \emph{Absolute
  profinite rigidity and hyperbolic geometry}, 2018.

\bibitem{Bridson-Reid:figure-eight}
M.~R. Bridson and A.~W. Reid, \emph{Profinite rigidity, fibering, and the
  figure-eight knot}, arXiv e-print (2015).

\bibitem{Burghelea-et-al:torsion}
D.~Burghelea, L.~Friedlander, T.~Kappeler, and P.~McDonald, \emph{Analytic and
  {R}eidemeister torsion for representations in finite type {H}ilbert modules},
  Geom. Funct. Anal. \textbf{6} (1996), no.~5, 751--859. \MR{1415762}

\bibitem{Cassels1978}
J.~W.~S. Cassels, \emph{Rational quadratic forms}, London Mathematical Society
  Monographs, vol.~13, Academic Press, Inc. [Harcourt Brace Jovanovich,
  Publishers], London-New York, 1978. \MR{522835}

\bibitem{cheeger+gromov}
J.~Cheeger and M.~Gromov, \emph{{$L_2$}-cohomology and group cohomology},
  Topology \textbf{25} (1986), no.~2, 189--215. \MR{837621}

\bibitem{DemazureGabriel}
M.~Demazure and P.~Gabriel, \emph{Groupes alg\'ebriques. {T}ome {I}:
  {G}\'eom\'etrie alg\'ebrique, g\'en\'eralit\'es, groupes commutatifs}, Masson
  \& Cie, \'Editeur, Paris; North-Holland Publishing Co., Amsterdam, 1970, Avec
  un appendice {{\i}t Corps de classes local} par Michiel Hazewinkel.

\bibitem{DDMS2003}
J.D. Dixon, M.P.F. du~Sautoy, A.~Mann, and D.~Segal, \emph{{Analytic pro-p
  Groups}}, Cambridge studies in advanced mathematics (61), Cambridge
  University Press, Cambridge, 2003.

\bibitem{DFPR}
John~D. Dixon, Edward~W. Formanek, John~C. Poland, and Luis Ribes,
  \emph{Profinite completions and isomorphic finite quotients}, J. Pure Appl.
  Algebra \textbf{23} (1982), no.~3, 227--231. \MR{644274}

\bibitem{Edwards1974}
H.~M. Edwards, \emph{Riemann's zeta function}, Academic Press, New York-London,
  1974, Pure and Applied Mathematics, Vol. 58.

\bibitem{Efremov:cell-decompositions}
A.~V. Efremov, \emph{Cell decompositions and the {N}ovikov-{S}hubin
  invariants}, Uspekhi Mat. Nauk \textbf{46} (1991), no.~3(279), 189--190.
  \MR{1134099}

\bibitem{Gaboriau:invariants-l2}
D.~Gaboriau, \emph{Invariants {$l^2$} de relations d'\'{e}quivalence et de
  groupes}, Publ. Math. Inst. Hautes \'{E}tudes Sci. (2002), no.~95, 93--150.
  \MR{1953191}

\bibitem{SGA3-III}
P.~Gille and P.~Polo (eds.), \emph{Sch\'emas en groupes ({SGA} 3). {T}ome
  {III}. {S}tructure des sch\'emas en groupes r\'eductifs}, Documents
  Math\'ematiques (Paris), vol.~8, Soci\'et\'e Math\'ematique de France, Paris,
  2011, S\'eminaire de G\'eom\'etrie Alg\'ebrique du Bois Marie 1962--64.

\bibitem{Harder71}
G.~Harder, \emph{A {G}auss-{B}onnet formula for discrete arithmetically defined
  groups}, Ann. Sci. \'{E}cole Norm. Sup. (4) \textbf{4} (1971), 409--455.
  \MR{0309145}

\bibitem{Humphreys}
J.~E. Humphreys, \emph{Introduction to {L}ie algebras and representation
  theory}, Springer-Verlag, New York-Berlin, 1972, Graduate Texts in
  Mathematics, Vol. 9.

\bibitem{Kammeyer:thesis}
H.~Kammeyer, \emph{\(l^2\)-invariants of nonuniform lattices in semisimple lie
  groups}, Ph.D. thesis, Universit\"at G\"ottingen, 04 2013.

\bibitem{Kammeyer:nonuniform}
\bysame, \emph{{$L^2$}-invariants of nonuniform lattices in semisimple {L}ie
  groups}, Algebr. Geom. Topol. \textbf{14} (2014), no.~4, 2475--2509.
  \MR{3331619}

\bibitem{Kammeyer:intro-l2}
\bysame, \emph{Introduction to \(\ell^2\)-invariants}, lecture notes, 2018,
  available for download at \url{https://topology.math.kit.edu/21_679.php}.

\bibitem{Kammeyer:commensurable}
\bysame, \emph{Profinite commensurability of \(s\)-arithmetic}, arXiv e-print
  (2018).

\bibitem{Kammeyer-Sauer:spinor-groups}
H.~Kammeyer and R.~Sauer, \emph{S-arithmetic spinor groups with the same finite
  quotients and distinct \(\ell^2\)-cohomology}, arXiv e-print (2018).

\bibitem{KionkePhD}
S.~Kionke, \emph{{Lefschetz Numbers of Involutions on Arithmetic Subgroups of
  Inner forms of the Special Linear Group}}, Ph.D. thesis, Universit\"at Wien,
  11 2012.

\bibitem{Kionke2014b}
\bysame, \emph{Lefschetz numbers of symplectic involutions on arithmetic
  groups}, Pacific J. Math. \textbf{271} (2014), no.~2, 369--414.

\bibitem{Kionke2014}
\bysame, \emph{{On lower bounds for cohomology growth in p-adic analytic
  towers}}, Math.~Z. \textbf{277} (2014), 709--723.

\bibitem{Knapp}
A.~W. Knapp, \emph{{Lie Groups Beyond an Introduction, 2nd Ed.}}, Progress in
  Mathematics, vol. 140, Birkh\"auser, Boston, 2002.

\bibitem{Kneser1979}
M.~Kneser, \emph{Normalteiler ganzzahliger {S}pingruppen}, J. Reine Angew.
  Math. \textbf{311/312} (1979), 191--214.

\bibitem{Knus1991}
M.-A. Knus, \emph{Quadratic and {H}ermitian forms over rings}, Grundlehren der
  Mathematischen Wissenschaften, vol. 294, Springer-Verlag, Berlin, 1991, With
  a foreword by I. Bertuccioni.

\bibitem{BookOfInvolutions}
Max-Albert Knus, Alexander Merkurjev, Markus Rost, and Jean-Pierre Tignol,
  \emph{The book of involutions}, American Mathematical Society Colloquium
  Publications, vol.~44, American Mathematical Society, Providence, RI, 1998,
  With a preface in French by J. Tits. \MR{1632779}

\bibitem{Kottwitz1988}
R.~E. Kottwitz, \emph{Tamagawa numbers}, Ann. of Math. (2) \textbf{127} (1988),
  no.~3, 629--646.

\bibitem{Lubotzky-Segal:subgroup-growth}
A.~Lubotzky and D.~Segal, \emph{Subgroup growth}, Progress in Mathematics, vol.
  212, Birkh\"auser Verlag, Basel, 2003. \MR{1978431}

\bibitem{Lueck:l2-invariants}
W.~L\"uck, \emph{{$L^2$}-invariants: theory and applications to geometry and
  {$K$}-theory}, Ergebnisse der Mathematik und ihrer Grenzgebiete. 3. Folge. A
  Series of Modern Surveys in Mathematics [Results in Mathematics and Related
  Areas. 3rd Series. A Series of Modern Surveys in Mathematics], vol.~44,
  Springer-Verlag, Berlin, 2002. \MR{1926649}

\bibitem{Lueck-Sauer-Wegner:determinant}
W.~L\"uck, R.~Sauer, and C.~Wegner, \emph{{$L^2$}-torsion, the
  measure-theoretic determinant conjecture, and uniform measure equivalence},
  J. Topol. Anal. \textbf{2} (2010), no.~2, 145--171. \MR{2652905}

\bibitem{Margulis:discrete-subgroups}
G.~A. Margulis, \emph{Discrete subgroups of semisimple {L}ie groups},
  Ergebnisse der Mathematik und ihrer Grenzgebiete (3) [Results in Mathematics
  and Related Areas (3)], vol.~17, Springer-Verlag, Berlin, 1991. \MR{1090825}

\bibitem{milne:algebraic-groups}
J.~S. Milne, \emph{Algebraic groups}, Cambridge Studies in Advanced
  Mathematics, vol. 170, Cambridge University Press, Cambridge, 2017, The
  theory of group schemes of finite type over a field. \MR{3729270}

\bibitem{Minasyan_Zalesskii}
Ashot Minasyan and Pavel Zalesskii, \emph{Virtually compact special hyperbolic
  groups are conjugacy separable}, Comment. Math. Helv. \textbf{91} (2016),
  no.~4, 609--627. \MR{3566518}

\bibitem{Neukirch}
J.~Neukirch, \emph{Algebraic number theory}, Grundlehren der Mathematischen
  Wissenschaften, vol. 322, Springer-Verlag, Berlin, 1999.

\bibitem{Olbrich:l2-symmetric}
M.~Olbrich, \emph{{$L^2$}-invariants of locally symmetric spaces}, Doc. Math.
  \textbf{7} (2002), 219--237. \MR{1938121}

\bibitem{PlatonovRapinchuk}
V.~Platonov and A.~Rapinchuk, \emph{{Algebraic groups and number theory}}, Pure
  and Applied Mathematics, 139, Academic Press, Inc., Boston, 1994.

\bibitem{Raghunathan_CSP}
M.~S. Raghunathan, \emph{The congruence subgroup problem}, Proc. Indian Acad.
  Sci. Math. Sci. \textbf{114} (2004), no.~4, 299--308. \MR{2067695}

\bibitem{Raghunathan:csp-survey}
\bysame, \emph{The congruence subgroup problem}, Proc. Indian Acad. Sci. Math.
  Sci. \textbf{114} (2004), no.~4, 299--308. \MR{2067695}

\bibitem{Rapinchuk}
A.~Rapinchuk, \emph{The congruence subgroup problem for algebraic groups},
  Topics in algebra, {P}art 2 ({W}arsaw, 1988), Banach Center Publ., vol.~26,
  PWN, Warsaw, 1990, pp.~399--410. \MR{1171288}

\bibitem{Rapinchuk_Segev}
Andrei~S. Rapinchuk, Yoav Segev, and Gary~M. Seitz, \emph{Finite quotients of
  the multiplicative group of a finite dimensional division algebra are
  solvable}, J. Amer. Math. Soc. \textbf{15} (2002), no.~4, 929--978.
  \MR{1915823}

\bibitem{ribes+zalesskii}
L.~Ribes and P.~Zalesskii, \emph{Profinite groups}, second ed., Ergebnisse der
  Mathematik und ihrer Grenzgebiete. 3. Folge. A Series of Modern Surveys in
  Mathematics [Results in Mathematics and Related Areas. 3rd Series. A Series
  of Modern Surveys in Mathematics], vol.~40, Springer-Verlag, Berlin, 2010.
  \MR{2599132}

\bibitem{RohlfsSpeh1989}
J.~Rohlfs and B.~Speh, \emph{{Automorphic representations and Lefschetz
  numbers}}, {Ann.~Sci.~\'Ec.~Norm.~Sup\'er. (4)} \textbf{22} (1989), no.~3,
  473--499.

\bibitem{Scharlau1985}
W.~Scharlau, \emph{{Quadratic and Hermitian Forms}}, Grundlehren der
  math.~Wiss.~(270), Springer-Verlag, Berlin, 1985.

\bibitem{Serre1971}
J.-P. Serre, \emph{Cohomologie des groupes discrets},  (1971), 77--169. Ann. of
  Math. Studies, No. 70.

\bibitem{Serre:groupes-de-congruence}
\bysame, \emph{Groupes de congruence (d'apr\`es {H}. {B}ass, {H}. {M}atsumoto,
  {J}. {M}ennicke, {J}. {M}ilnor, {C}. {M}oore)}, S\'eminaire {B}ourbaki,
  {V}ol.\ 10, Soc. Math. France, Paris, 1995, pp.~Exp.\ No.\ 330, 275--291.
  \MR{1610421}

\bibitem{Stover}
Matthew Stover, \emph{Lattices in $\mathrm{PU}(n,1)$ that are not profinitely
  rigid}, 2018.

\end{thebibliography}
 \bibliographystyle{amsplain}
\end{document}